\newtheorem{theorem}{Theorem}[section]
\newtheorem{lemma}[theorem]{Lemma}
\newtheorem{note}[theorem]{Note}
\newtheorem{prop}[theorem]{Proposition}
\newtheorem{cor}[theorem]{Corollary}
\newtheorem{exa}[theorem]{Example}
\newtheorem{notation}[theorem]{Notation}
\newtheorem*{Theorem1'}{Theorem 1'}
\theoremstyle{definition}
\newtheorem{definition}[theorem]{Definition}
\theoremstyle{remark}
\numberwithin{equation}{section}
\newcommand \Q{{\mathbb Q}}
\newcommand \N{{\mathbb N}}
\newcommand \End{{\mathrm {End}}}
\newcommand \Hom{{\mathrm {Hom}}}
\newcommand \dm{{\mathrm {dim}}}
\newcommand \chr{{\mathrm {char}}}
\newcommand \GL{{\mathrm {GL}}}
\newcommand \FGL{{\mathrm {FGL}}}
\newcommand \la{{\lambda}}
\newcommand \al{{\alpha}}
\newcommand \be{{\beta}}
\newcommand \de{{\delta}}
\newcommand \ga{{\gamma}}
\newcommand \lm{{\lambda}}
\newcommand \si{{\sigma}}
\newcommand \ind{{\mathrm {ind}}}
\newcommand \res{{\mathrm {res}}}
\newcommand \Irr{{\mathrm {Irr}}}
\begin{document}

\title [\small{Representations of McLain groups}] {\small{Representations of McLain groups}}

\author{Fernando Szechtman}
\address{Department of Mathematics and Statistics, University of Regina}
\email{fernando.szechtman@gmail.com}
\thanks{The first and second authors were supported in part by an NSERC discovery grant}

\author{Allen Herman}
\address{Department of Mathematics and Statistics, University of Regina}
\email{Allen.Herman@uregina.ca}

\author{Mohammad A. Izadi}
\address{Department of Mathematics and Statistics, University of Regina}
\email{izadi.mohammadali9@gmail.com}

\subjclass[2010]{20C15, 20C20}



\keywords{McLain group; unitriangular group; basic character;
supercharacter}

\begin{abstract} Basic modules of McLain groups $M=M(\Lambda,\leq, R)$
are defined and investigated. These are (possibly infinite
dimensional) analogues of Andr\'{e}'s supercharacters of $U_n(q)$.
The ring $R$ need not be finite or commutative and the field
underlying our representations is essentially arbitrary: we deal
with all characteristics, prime or zero, on an equal basis. The
set $\Lambda$, totally ordered by~$\leq$, is allowed to be
infinite. We show that distinct basic modules are disjoint,
determine the dimension of the endomorphism algebra of a basic
module, find when a basic module is irreducible, and exhibit a
full decomposition of a basic module as direct sum of irreducible
submodules, including their multiplicities. Several examples of
this decomposition are presented, and a criterion for a basic
module to be multiplicity-free is given. In general, not every
irreducible module of a McLain group is a constituent of a basic
module.
\end{abstract}

\maketitle

\section{Introduction}

In 1954 McLain \cite{M} constructed a family of groups that has
been a rich source of examples in group theory ever since (see
\cite{M2}, \cite{R},  \cite{HH}, \cite{Ro}, \cite{W}, \cite{DG},
\cite{CS}, \cite{Sz2}, for instance). A general McLain group $M=M(\Lambda,\leq,R)$ depends on a set
$\Lambda$, partially ordered by $\leq$, and an arbitrary ring $R$
with $1\neq 0$. Even though a partial order will do for some of our
purposes, for best results a total order will be required. In the
special case when $|\Lambda|=n$ is finite and $\leq$ is a total
order, $M=U_n(R)$ is the subgroup of $\GL_n(R)$ of all upper
triangular matrices with 1's on the main diagonal.

The main goal of this paper is define and study basic modules of
$M$, which are a generalization of the supercharacters of
$U_n(q)$, where $R=F_q$ is a finite field of characteristic~$p$.
We stress the fact that $\Lambda$ as well as $R$ are allowed to be
infinite, and $M$-modules are allowed to be infinite dimensional
over an arbitrary field $F$ (which need not have
characteristic 0). Moreover, the commutativity or not of~$R$ plays
no role whatsoever, so we will allow $R$ to be non-commutative. It
is perhaps surprising how of much of the theory of supercharacters
goes through in this context. A detailed description appears
below, after an overview of prior work on the subject. 

The representation theory of $U_n(q)$ draws considerable attention
due to its attractive nature and open problems. The literature on
the subject, as well as on the related algebra groups and Sylow
$p$-subgroups of classical groups, is too vast to review in full
detail and we will restrict ourselves to a limited overview.

One line of investigation was concerned with the degrees of the
complex irreducible characters of $U_n(q)$. In 1974 Lehrer
\cite{L} considered the so called elementary characters of
$U_n(q)$ as well as certain products of them, obtaining
(\cite[Corollary $5.2^\prime$]{L}) irreducible characters of
$U_n(q)$ of degree $q^c$ for every integer $c$ such that $0\leq
c\leq \mu(n)=(n-2)+(n-4)+\cdots$. Two decades later, Isaacs
\cite{I} confirmed that every irreducible character of, not only
$U_n(q)$, but also every $F_q$-algebra group, has $q$-power
degree. Isaacs' paper left open the question as to whether an
earlier assertion by Gutkin was true: is every irreducible
character of an algebra group induced from a linear character of
an algebra subgroup? This was partially confirmed by Andr\'{e}
\cite{A4} and fully by Halasi \cite{H}. An extension of Halasi's
result has recently been obtained by Boyarchenko \cite{B}. Isaacs'
result on character degrees does not extend directly to Sylow
$p$-subgroups of other classical groups, a fact recognized by
Isaacs himself and further confirmed by Gow, Marjoram and
Previtali \cite{GMP}. However, for $p$ odd,  Isaacs' result was
successfully extended to Sylow $p$-subgroups of symplectic,
orthogonal and unitary groups by Szegedy \cite{Sze}.

It is worth noting that all of Lehrer's elementary characters can
be constructed by lifting those associated to the position $(1,n)$
and that, when $q$ is odd, a character attached to this position is
nothing but a Weil character of $H\rtimes U_{n-2}(q)$, where $H$
is the Heisenberg group associated to a symplectic space of
dimension $2(n-2)$ over~$F_q$ and $U_{n-2}(q)$ is viewed as a
subgroup of a Sylow $p$-subgroup of the corresponding symplectic
group. Moreover, this identification remains valid when $F_q$  is
replaced by more general finite, commutative, local rings of odd characteristic
(see \cite{CMS} for the Weil representation in this context).

Another source of research is related to a conjecture by Higman
\cite{Hi}, to the effect that the number of irreducible characters
of $U_n(q)$ is an integer polynomial in~$q$. A sharpening of
Higman's conjecture, attributed by Lehrer \cite{L} to J.G.
Thompson, states that, for $0\leq c\leq \mu(n)$, the number of
irreducible characters of $U_n(q)$ of degree $q^c$ is an integer
polynomial in $q$. A further sharpening of the latter was
conjectured by Isaacs \cite{I2}, with evidence that the number of
irreducible characters of $U_n(q)$ of degree $q^c$ is a polynomial
in $q-1$ with non-negative integer coefficients. Work on these
conjectures has been intense. We refer the reader to \cite{G},
\cite{Le}, \cite{Lo}, \cite{Me}, \cite{Me2}, \cite{Mm}, \cite{HP},
\cite{T}, \cite{VA} and \cite{VA2} for work in this direction.

Rather than the degree of the irreducible characters of $U_n(q)$
or the total number of them, our attention is more related to the
study of supercharacters of $U_n(q)$ and their irreducible
constituents. As mentioned above, Lehrer \cite{L} proved that
certain products of elementary characters of $U_n(q)$ remain
irreducible. This prompted Andr\'{e} \cite{A} to consider more
general products of elementary characters, then called basic
characters and now called supercharacters after the work of
Diaconis and Isaacs \cite{DI}, who axiomatized a theory of
supecharacters and applied it to algebra groups. The main result
of \cite{A} is that every irreducible character of $U_n(q)$ is a
constituent of one and only one supercharacter. In particular,
distinct supercharacters are orthogonal. A supercharacter need not
be irreducible and Andr\'{e} \cite{A} gives a formula for the
inner product of a supercharacter with itself. All of this was
done under the assumption $p\geq n$, a restriction that was latter
removed in~\cite{A2}. The problem of ``finding" the irreducible
constituents of a supercharacter, as well as their multiplicities,
was addressed by Andr\'{e} in \cite[Theorem 2]{A3}, who also gave
\cite[Theorem 4]{A3} necessary and sufficient conditions for a
supercharacter to be a multiple of an irreducible character.
Andr\'{e} and Nicol\'{a}s \cite{AN} produced a fairly wide
generalization of the theory by considering not just $U_n(q)$ or
even $F_q$-algebra groups but the adjoint group
$G(A)=\{1+a\,|\,a\in A\}$ of a finite nilpotent ring $A$. They
were able to extend to this context the aforementioned result of
Halasi. Moreover, they defined and studied supercharacters in this
context, obtaining a decomposition (cf. \cite[Theorem 4.2]{AN})
much like the one given in \cite[Theorem 2]{A3}, as well necessary
and sufficient conditions (cf. \cite[Theorem 6.1]{AN}) for a
supercharacter to be a multiple of an irreducible character. At
the end of \cite{AN} they specialize to $U_n(q)$, reprove the
formula for the inner product of a supercharacter with itself, and
derive an irreducibility criterion for supercharacters (cf.
\cite[Theorem 7.1]{AN}).


Before we state our main results, we must describe our overall
assumption on $R$ and $F$. Our only assumption is the existence of
a right primitive linear character $R^+\to F^*$,
that is, a group homomorphism $R^+\to F^*$ having no non-zero
right ideals in its kernel. When $R$ is finite this condition has
been studied in detail (see \cite{La}, \cite{CG}, \cite{Wo} and
\cite{Ho}): it is left and right symmetric, and equivalent to $R$
being a Frobenius ring. This symmetry was left as an open question
in \cite{CG}, although it had already been established in
\cite{La}. It is reproved in \cite{Wo} and \cite{Ho}. As a
byproduct of our study of elementary modules, \S\ref{elemeM}
furnishes an independent proof of this symmetry by means of fully
ramified characters. The condition that $R$ have a primitive
linear character when $R$ is a finite, local and commutative ring
has already appeared in representation theory, e.g., in the
context of the Weil representation of symplectic groups over rings
(see \cite{CMS2} and \cite{Sz3}).

Let $K$ be a non-archimidean local field with ring of
integers~${\mathcal O}$, maximal ideal ${\mathfrak p}$ and residue
field $F_q={\mathcal O}/{\mathfrak p}$ of prime characteristic
$p$. Then $R={\mathcal O}/{\mathfrak p}^m$ is a finite, principal,
local, commutative ring of size $q^m$ affording a primitive linear
character when $F$ has a root of unity of order $p^m$ if
$\chr(K)=0$ and $p$ if $\chr(K)=p$. This can arguably be
considered as the most important example. A non-commutative
analogue can be obtained from Hilbert's twist ${\mathcal O}=D[[x;
\sigma]]$, the ring of skew power series twisted by an
automorphism $\sigma$ of a division ring $D$. This is a local ring
with Jacobson radical ${\mathfrak p}=(x)$. Then $R={\mathcal
O}/{\mathfrak p}^m$ affords a primitive linear character if and
only if so does $D$. In this regard, if $D$ has prime
characteristic $p$ it suffices that $F$ have a root of unity of
order $p$, while if $\chr(D)=0$ it is enough that, for some prime
$p$, $F$ have roots of unity of order $p^\ell$ for all $\ell\geq
1$. While in the above examples $R$ is always a principal ring,
this need not be the case. See \cite{CG} for further examples. The
actual choice of $R$ has no effect whatsoever on our arguments.

We begin our paper with a reminder, in \S\ref{mc}, of the
construction and basic properties of McLain groups. Basic tools to
deal with finite and infinite dimensional modules on an equal
basis are found in \S\ref{clif}, which essentially reproduces some
of the results from \cite{Sz} on the Clifford theory of possibly
infinite dimensional modules. In \S\ref{elemeM} we define and
study elementary modules of certain subgroups of $M$, which are themselves McLain groups,
generalizing the elementary modules constructed by Lehrer.
Our elementary modules give rise to the simplest example of two curious phenomena: an
irreducible module having no irreducible submodules (and hence failing
to be completely reducible) when restricted to a normal subgroup (further examples can
be found \cite{Sz}), and a family of (infinitely many) commuting
diagonalizable operators (acting on an infinite dimensional vector
space) having no common eigenvector.

Our paper properly begins after the above preliminary sections. From
\S\ref{mains} onwards we must assume that $\leq$ is a total order.
In particular, this allows us to extend to all of $M$ the action on elementary modules.
Let
$$
\Phi=\{(\al,\be)\in\Lambda\times\Lambda\,|\, \al<\be\}.
$$
For $\al<\be$ in $\Lambda$ we will abuse notation and denote by
$(\al,\be)$ not just the pair in~$\Phi$, but also the open
interval $\{\ga\in\Lambda\,|\, \al<\ga<\be\}$ in $\Lambda$. To
every triple $(\al,\be,\lm)$, where $(\al,\be)\in\Phi$ and
$\lm:R^+\to F^*$ is a right primitive linear character, there
corresponds an elementary $M$-module $V(\al,\be,\lm)$. This is
finite dimensional over $F$ if and only if the open interval
$(\al,\be)$ is empty, in which case $\dm(V(\al,\be,\lm))=1$, or
$R$ and $(\al,\be)$ are both finite, in which case
$\dm(V(\al,\be,\lm))=|R|^{|(\al,\be)|}$. Let
$$D=\{(\al_1,\be_1),\dots,(\al_m,\be_m)\}$$ be a basic subset of
$\Phi$, in the sense that all $\al_i$ (resp. all $\be_i$) are
distinct. Let $f$ be a choice function, from $D$ into the set of
all right primitive linear characters $R^+\to F^*$, say
$f(\al_i,\be_i)=\la_i$, $1\leq i\leq m$. Then the basic $M$-module
$V(D,f)$  is
$$V(D,f)=V(\al_1,\be_1,\lm_1)\otimes\cdots\otimes V(\al_m,\be_m,\lm_m).$$
This is in complete agreement with Andr\'{e}'s basic characters, as defined in \cite{A}.

Given a group $G$ and $G$-modules $X$ and $Y$, we define
$$
(X,Y)_G=\dm_F(\Hom_{FG}(X,Y)).
$$
Our first main result, Theorem \ref{disy}, states that basic modules are disjoint, that is,
\begin{equation}\label{nat}
(V(D,f),V(D',f'))_M=0\text{ if } (D,f)\neq (D',f').
\end{equation}
In words, distinct basic modules can only be connected via a zero
homomorphism.  We stress once more that this result as well as all
results stated below are, until further notice, valid in utter
generality: $R$ and $\Lambda$ are allowed to be infinite and a
basic $M$-module may be infinite dimensional. As is often the
case, a proof given in a more general context is conceptually
simpler (as ideas are stripped to the bone) and we believe this is
the case with Theorem \ref{disy} as well as others in this paper.

Let
$$
\Omega=\{(\al,\ga)\in\Phi\,|\,\exists\, (\al,\be),(\ga,\de)\in D\text{ such that
} \al<\ga<\be<\de\}.
$$
Then Theorem \ref{nest} gives the following irreducibility
criterion for basic modules: $V(D,f)$ is irreducible if and only
if $\Omega=\emptyset$. This is in complete agreement with the
corresponding result for $U_n(q)$ (cf. \cite[Theorem 7.1]{AN}).
Our proof of this irreducibility criterion is a direct application
of a well-known theorem of Gallagher \cite[Theorem 2]{Ga}. No
calculations of any kind are required. However, for full
generality, we must resort to the extension of Gallagher's theorem
found in \cite[Theorem 3.11]{Sz} (this extension is not required
for those interested only in $U_n(q)$, or even $U_n(R)$ with $R$
finite).

As in the case of $U_n(q)$, associated to each closed subset $\Gamma$ of $\Phi$ there is a
corresponding pattern subgroup $M(\Gamma)$ of $M$. Now $D$ gives rise to the closed subset
$$
\Gamma=\underset{1\leq i\leq
m}\bigcap(\Phi\setminus[\al_i,\rightarrow,\be_i))=\Phi\setminus(\underset{1\leq
i\leq m}\bigcup [\al_i,\rightarrow,\be_i)),
$$
where for $(\al,\be)\in\Phi$ we define
$$
[\al,\rightarrow,\be)=\{(\al,\ga)\in\Phi\,|\, \gamma<\be\}.
$$
If we let $H=M(\Gamma)$ then, much as in the case of
$U_n(q)$, there is a linear character $\lm:H\to F^*$,
corresponding to $(\lm_1,\dots,\lm_m)$, and a 1-dimensional
$H$-module $W$ upon which $H$ acts via $\lm$, such that (cf.
Theorem \ref{monom}):
$$
V(D,f)\cong \ind_H^M W.
$$
Again, this is in line with the corresponding result for $U_n(q)$, as found in \cite{A}. Let
$$
\Gamma_1=\Gamma\cup\Omega\text{ (disjoint union)},
$$
which is a closed subset of $\Phi$, and let $I=M(\Gamma_1)$. Complementing (\ref{nat}) we have
the following result (cf. Theorem \ref{inv}):
\begin{equation}\label{nat2}
(V(D,f),V(D,f))_M=[I:H].
\end{equation}
This is the perfect analogue for Andr\'{e}
formula (cf. \cite[Theorem 2]{A2}) for the inner product of a basic character with itself
(see also \cite[Corollary 2.10]{Le2}). Thus, $V(D,f)$ is irreducible if and
only if $I=H$ (cf. Theorem~\ref{coni}).


The most delicate part of our paper deals with the actual
decomposition of a basic character. This requires considerably
more effort than above. The most significant obstacle we face is
that a $G$-module $X$ may satisfy $(X,X)_G=1$ without being irreducible.
Obviously, irreducibility in this setting amounts to complete
reducibility. When $R$ and $\Lambda$ are finite this is a
non-issue, since the existence of a primitive linear character
implies $\chr(F)\nmid |R|$ (cf. Lemma \ref{cara}), and complete
reducibility follows from Maschke's theorem. By keeping $R$ finite
but allowing $\Lambda$ to be arbitrary, we were able to overcome
this obstacle by using the concept of ascendant subgroup. This is the same tool
successfully used by Meierfrankenfeld \cite{Me} and Wehrfritz
\cite{We}: if $G$ is an irreducible subgroup of $\FGL(V)$ (the
full finitary general linear group) and $H$ is an ascendant
subgroup of $G$ then $H$ is completely reducible (in this regard,
recall the aforementioned failure of a normal subgroup of an
irreducible group to be completely reducibly, encountered in
\S\ref{elemeM}). We actually require the opposite direction, from
$H$ to $G$, and have no need to resort to $\FGL(V)$. Our results
in this context are fairly general criteria for complete
reducibility, found in Theorem \ref{fund2} and its consequence,
Theorem \ref{mage}. The latter is a direct generalization of the
well-known irreducibility criterion of Mackey \cite{Ma},
originally proved in the context of finite groups and finite
dimensional modules over an algebraically closed field.

In order to be able to use the above general tools to decompose
basic modules of McLain groups, it is necessary to verify the
hypothesis these tools require. Much of this verification is
carried out in \S\ref{prema} as well as in Theorem \ref{xasc}.

The above verification allows us to reach our main result, namely
Theorem \ref{nuc}, which gives a full decomposition of a basic
module of a McLain group into irreducible constituents, including
multiplicities. Theorem \ref{nuc} is a complete generalization of
\cite[Theorem 2]{A3}. As in the case of $U_n(q)$, the decomposition of $V(D,f)$
is entirely controlled by that of $\ind_H^I W$. Explicitly, our main result
reads as follows.

\begin{theorem}\label{nuc23} Suppose $R$ is finite and $F$ is a splitting field for $I$ over
$\lm$ (in the sense of Definition \ref{skipi}). Then

(a) $\ind_H^I W$ is a completely reducible $I$-module of finite length $\leq [I:H]$ and
$$
\ind_H^I W=m_1V_1\oplus\cdots\oplus m_t V_t,
$$
where $\{V_1,\dots,V_t\}$ is a full set of representatives for the
isomorphism classes of irreducible $I$-modules lying over $W$ (or, equivalently, $\lm$). Moreover,
$$
(W,\res_H^I V_i)_H=\dm(V_i)=m_i,\quad \quad 1\leq i\leq t.
$$

(b) Assume, in addition, that $\Lambda$ is well-ordered by $\geq$, the inverse order of $\leq$
(this means: start with any well-order and impose its inverse on $\Lambda$). Then
$$
V(D,f)\cong m_1\ind_I^G V_1\oplus\cdots\oplus m_t \ind_I^G V_t,
$$
where $\ind_I^G V_1,\dots,\ind_I^G V_t$ are non-isomorphic irreducible $G$-modules, and
$$
(\ind_I^G V_i, V(D,f))_M=\dm(V_i)=m_i, \quad 1\leq i\leq t.
$$
\end{theorem}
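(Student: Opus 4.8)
The plan is to deduce part (a) from the general complete-reducibility criteria of \S\ref{prema} (Theorems \ref{fund2} and \ref{mage}) together with the Clifford-theoretic machinery of \S\ref{clif}, and then bootstrap part (b) from part (a) via the induction functor $\ind_I^G$ and the disjointness/invariant-product formulas (\ref{nat}) and (\ref{nat2}). First I would invoke the identification $V(D,f)\cong\ind_H^M W$ from Theorem \ref{monom}, so that by transitivity of induction $V(D,f)\cong\ind_I^G(\ind_H^I W)$; this reduces everything to understanding the $I$-module $\ind_H^I W$, exactly as claimed in the statement. For part (a), the key point is that $\ind_H^I W$ is completely reducible: here I would apply the verification carried out in Theorem \ref{xasc} (that $H$ is ascendant in $I$, or whatever the precise hypothesis of Theorem \ref{fund2}/\ref{mage} turns out to be) to conclude that $\ind_H^I W$ is semisimple. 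Given that, since $R$ is finite and the interval data defining $\Gamma_1$ is controlled, $\ind_H^I W$ is finitely generated and hence of finite length; the bound on the length is $[I:H]=(V(D,f),V(D,f))_M$ by (\ref{nat2}) together with Frobenius reciprocity, $\dm_F\Hom_{FI}(\ind_H^I W,\ind_H^I W)=\dm_F\Hom_{FH}(W,\res_H^I\ind_H^I W)$, and each irreducible summand contributes at least $1$ to this dimension. Writing $\ind_H^I W=m_1V_1\oplus\cdots\oplus m_tV_t$ with the $V_i$ pairwise non-isomorphic, the splitting-field hypothesis (Definition \ref{skipi}) guarantees $\End_{FI}(V_i)=F$, so Frobenius reciprocity gives $(W,\res_H^I V_i)_H=m_i$; and the equality $\dm(V_i)=m_i$ is the standard ``fully ramified'' phenomenon—each $V_i$ lies over the linear character $\lm$ of $H$, so $\res_H^I V_i$ is a direct sum of $\dm(V_i)$ copies of $W$, whence $(W,\res_H^I V_i)_H=\dm(V_i)$. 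That the $V_i$ exhaust the irreducibles over $W$ follows because any irreducible $I$-module $V$ lying over $W$ satisfies $(W,\res_H^I V)_H\neq 0$, hence by reciprocity $(\ind_H^I W, V)_I\neq 0$, so $V$ is one of the $V_i$.

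For part (b), I would first argue that each $\ind_I^G V_i$ is irreducible. The inverse-well-ordering hypothesis on $\Lambda$ is what makes $I$ (equivalently $H$) ascendant in $G$ in the appropriate transfinite sense, so the complete-reducibility criterion applies again to show $\ind_I^G V_i$ is semisimple; to upgrade semisimplicity to irreducibility one computes $(\ind_I^G V_i,\ind_I^G V_i)_G=1$. This computation is the heart of the matter: by Frobenius reciprocity it equals $(V_i,\res_I^G\ind_I^G V_i)_I$, and here one uses a Mackey-type decomposition of $\res_I^G\ind_I^G V_i$ (valid in the infinite setting by the machinery of \S\ref{clif}), together with the fact—already encoded in the disjointness result Theorem \ref{disy} and in the structure of $\Gamma_1$—that the only double coset contributing a nonzero Hom is the trivial one, yielding $(V_i,\res_I^G\ind_I^G V_i)_I=\End_{FI}(V_i)=F$ by the splitting hypothesis. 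Non-isomorphism of $\ind_I^G V_i$ and $\ind_I^G V_j$ for $i\neq j$ follows similarly: $(\ind_I^G V_i,\ind_I^G V_j)_G=(V_i,\res_I^G\ind_I^G V_j)_I$, and the same Mackey analysis reduces this to $(V_i,V_j)_I=0$. Finally, applying $\ind_I^G$ to the decomposition of part (a) gives
$$
V(D,f)\cong\ind_I^G\bigl(m_1V_1\oplus\cdots\oplus m_tV_t\bigr)\cong m_1\ind_I^G V_1\oplus\cdots\oplus m_t\ind_I^G V_t,
$$
and the multiplicity formula $(\ind_I^G V_i,V(D,f))_M=m_i$ is then immediate from the disjointness of the $\ind_I^G V_j$ just established, since $\ind_I^G V_i$ appears with multiplicity $m_i$ and its endomorphism ring is $F$. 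The identity $m_i=\dm(V_i)$ is carried over verbatim from part (a).

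The main obstacle I anticipate is the Mackey-style computation of $\res_I^G\ind_I^G V_i$ in the infinite-dimensional, infinite-index setting: one must ensure that the double coset decomposition of $G$ with respect to $I$ is well-behaved enough that $\Hom_{FG}(\ind_I^G V_i,\ind_I^G V_j)$ genuinely decomposes as a (possibly infinite) product indexed by double cosets, and then that all but the identity double coset contribute zero. The control on which double cosets matter should come from the combinatorics of $\Gamma_1=\Gamma\sqcup\Omega$ and the defining property of a basic subset $D$ (all $\al_i$ distinct, all $\be_i$ distinct), exactly the same features that drive the proof of Theorem \ref{disy}; but verifying the convergence/finiteness needed to make the Hom-space computation rigorous—rather than merely formal—is where the inverse-well-ordering hypothesis on $\Lambda$ will have to be used in an essential way, presumably through a transfinite induction along $\Lambda$ that was already set up in \S\ref{prema} and Theorem \ref{xasc}.
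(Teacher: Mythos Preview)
Your overall architecture is the same as the paper's: identify $V(D,f)\cong\ind_H^M W$ via Theorem~\ref{monom}, decompose $\ind_H^I W$, and then push this decomposition up to $M$ by induction. Part (a) is essentially correct, though your justification for the complete reducibility of $\ind_H^I W$ is slightly misattributed: Theorem~\ref{xasc} shows that $I$ is strongly ascendant in $M$, not that $H$ is ascendant in $I$. Since $[I:H]$ is finite (Theorem~\ref{corin}(b)) and coprime to $\chr(F)$ (Lemma~\ref{cara}), complete reducibility comes directly from the Maschke-type Proposition~\ref{compl}, which is how Theorem~\ref{fund2}(a) proceeds.

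The substantive difference is in part (b), precisely where you anticipate trouble. You propose to compute $(\ind_I^G V_i,\ind_I^G V_j)_G$ by a Mackey decomposition of $\res_I^G\ind_I^G V_j$ at the level of $I$, and you correctly flag this as delicate in the infinite-index, possibly infinite-dimensional setting. The paper sidesteps this entirely. It never decomposes $\res_I^G\ind_I^G V_j$. Instead, Theorem~\ref{fund1} computes $(\ind_H^G W,\ind_H^G W)_G=[I:H]$ by a Mackey argument carried out at the level of $H$ and the one-dimensional $W$; the crucial input here is Theorem~\ref{corin}(d), which says that for every $g\in M\setminus I$ there exists $h\in H\cap gHg^{-1}$ with $\lm(h)\neq\lm(g^{-1}hg)$, i.e.\ $(W,{}^gW)_{H\cap gHg^{-1}}=0$. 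This is the technical heart of the matter, and you do not cite it---attributing the disjointness instead to Theorem~\ref{disy} is too vague, since that theorem compares different basic modules rather than double cosets for a single one. Once $(\ind_H^G W,\ind_H^G W)_G=[I:H]$ is in hand, the paper expands both sides using $\ind_H^G W\cong\bigoplus m_i\ind_I^G V_i$ and the already-known $(\ind_H^I W,\ind_H^I W)_I=[I:H]$, and a pure counting argument (equations (\ref{tos3})--(\ref{tos6})) forces $(\ind_I^G V_i,\ind_I^G V_i)_G=(V_i,V_i)_I=1$ and the cross terms to vanish. Irreducibility then follows from the strongly ascendant hypothesis (condition (C3) of Theorem~\ref{fund2}) via transfinite induction, exactly as you outline. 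So your endgame is right; what you are missing is that the Hom computation is done once, at the level of $(H,W)$, and the values for $(\ind_I^G V_i,\ind_I^G V_j)_G$ are squeezed out of it arithmetically rather than recomputed.
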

The requirement that $\geq$ be a well-order is directly related to the complete reducibility issues
mentioned above. Specifically, Theorem \ref{xasc} shows that when $R$ is finite and $\geq$ is a well-order
then $I$ is a strongly ascendant subgroup of $M$ (in the sense of Definition \ref{asc}), which, in turn, ensures the complete reducibility of $V(D,f)$
by Theorem \ref{fund2}.

Some of the consequences of Theorem \ref{nuc23} appear to be
unknown even for $U_n(q)$. Indeed, as in the case of $U_n(q)$, we
have (cf. Theorem \ref{corin}) that $H$ is a normal subgroup of
$I$, and $I$ is included in the inertia group of $W$. What seems to
be unknown is when the action of $H$ on $W$ is extendible to $I$.
Theorem \ref{ext} answers this question: when $D$ has no special
triples, in the sense of Definition \ref{trip}. Combining Theorems
\ref{nuc} with Theorem \ref{ext} and the generalization of
Gallagher's theorem found in \cite[Theorem 3.11]{Sz}, we obtain a
much sharper decomposition of $V(D,f)$, as described in Theorem
\ref{tx}. This decomposition becomes even sharper if $I/H$ is
abelian, and this is stated in Corollary \ref{yeka}. Of course,
the action of $H$ on $W$ is, in general, not extendible to $I$,
and Example \ref{yeka2} illustrates how $V(D,f)$ decomposes in a
family of such cases. Combining all of the results of this
paragraph we obtain necessary and sufficient conditions for
$V(D,f)$ to be multiplicity-free, as described in Theorem
\ref{mulfe}. This result also seems to be unknown for $U_n(q)$.
Finally, Examples \ref{yeka2}, \ref{ulex} and \ref{coco}
illustrate, in our context, Andr\'{e}'s result on basic characters
which are multiplies of an irreducible character.



The main result of \cite{A} is actually false for $M$. Indeed, it
is shown in \cite{Sz2} that when $\Lambda=\Q$ is ordered as usual
and $R$ is a division ring whose characteristic is not at the same
time prime and equal to that of $F$, then $M$ has a faithful
irreducible module over $F$. But none of the basic modules for $M$
are faithful for such $\Lambda$, so not every irreducible $M$-module is
a constituent of a basic module. 

\section{McLain groups}\label{mc}

Let $R$ be a ring with $1\neq 0$, let $\Lambda$ be a non-empty set
partially ordered by $\le$, and set
$$
\Phi=\{(\al,\be)\in\Lambda\times\Lambda\,|\, \al<\be\}.
$$
Let $J$ be a free left $R$-module with basis $e_{\al\be}$,
$(\al,\be)\in\Phi$. We define a multiplication on $J$ by declaring
$$
e_{\al\be}e_{\be\gamma}=e_{\al\gamma},\quad
e_{\al\be}e_{\gamma\delta}=0\text{ if }\be\neq \gamma,
$$
and extending it to all of $J$ by $R$-bilinearity. This makes $J$
into a ring.

\begin{lemma} $J$ is a nil ring.
\end{lemma}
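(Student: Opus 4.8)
The plan is to show directly that every element of $J$ is nilpotent. Recall that $J$ is a free left $R$-module on the basis $\{e_{\al\be} \mid (\al,\be)\in\Phi\}$, so a typical element $x\in J$ has the form $x = \sum_{i=1}^{n} r_i e_{\al_i\be_i}$, a \emph{finite} $R$-linear combination of basis elements, where $(\al_i,\be_i)\in\Phi$, i.e.\ $\al_i < \be_i$ in $\Lambda$. The key combinatorial observation is that the finite set $S = \{\al_1,\be_1,\dots,\al_n,\be_n\}\subseteq\Lambda$, being a finite partially ordered set, has no chain of length exceeding $|S| =: N$. I will exploit this to bound the powers of $x$.

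First I would expand $x^{N+1}$ by multilinearity: $x^{N+1}$ is a sum of terms of the shape $(r_{i_1} e_{\al_{i_1}\be_{i_1}})(r_{i_2} e_{\al_{i_2}\be_{i_2}})\cdots(r_{i_{N+1}} e_{\al_{i_{N+1}}\be_{i_{N+1}}})$. Using that $R$ acts on the left and that $e_{\ga\de}r = r e_{\ga\de}$ is \emph{not} assumed, I should be a little careful: the product in $J$ is $R$-bilinear, so each such term can be rewritten, pulling scalars appropriately, as an element of $R$ times a product $e_{\al_{i_1}\be_{i_1}} e_{\al_{i_2}\be_{i_2}}\cdots e_{\al_{i_{N+1}}\be_{i_{N+1}}}$ of basis vectors. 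By the defining relations, such a product of basis vectors is $0$ unless $\be_{i_j} = \al_{i_{j+1}}$ for all $j=1,\dots,N$, in which case it equals $e_{\al_{i_1}\,\be_{i_{N+1}}}$. But if it is nonzero, then $\al_{i_1} < \be_{i_1} = \al_{i_2} < \be_{i_2} = \al_{i_3} < \cdots < \be_{i_{N+1}}$ is a strictly increasing chain in $S$ of length $N+2 > N = |S|$, which is impossible. Hence every term vanishes and $x^{N+1} = 0$.

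So each $x\in J$ is nilpotent, which is exactly the assertion that $J$ is a nil ring. I do not anticipate a genuine obstacle here; the only point requiring a moment's attention is the bookkeeping of scalars when $R$ is noncommutative, to confirm that a product of monomials $r_i e_{\al_i\be_i}$ is always a left-$R$-multiple of a single basis product $e_{\al_{i_1}\be_{i_1}}\cdots e_{\al_{i_{N+1}}\be_{i_{N+1}}}$ (which follows by an easy induction from $R$-bilinearity of the multiplication, since $e_{\al\be}(r e_{\ga\de}) = r(e_{\al\be}e_{\ga\de})$ by bilinearity). Once that is in place, the chain-length argument closes the proof immediately, and it is worth remarking that the bound $N+1$ on the nilpotency index of $x$ depends only on the finite subset of $\Lambda$ supporting $x$, not on $R$ or on $\Lambda$ itself.
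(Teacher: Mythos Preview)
Your proof is correct and follows essentially the same idea as the paper's: both exploit the fact that any $x\in J$ is supported on finitely many basis vectors $e_{\al\be}$, so the chains arising from nonzero products of these basis vectors have bounded length, forcing a sufficiently high power of $x$ to vanish. The paper phrases this as an induction on the maximal chain length in the support, while you give the explicit bound $x^{|S|+1}=0$; these are stylistic variants of the same argument.
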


\begin{proof} This follows by induction on the length of the
longest chain in $\Phi$ present in an element of $J$ when written
as a linear combination of the $e_{\al\be}$.
\end{proof}

Adjoining an identity element to $J$, we obtain the group
$$
M=M(\Lambda)=\{1+x\,|\, x\in J\},
$$
the \emph{McLain group} associated to $(\Lambda,\le)$ over $R$. Every
$g\in M$ has the form
\begin{equation}
\label{geme}
g=1+\underset{(\al,\be)\in S(g)}\sum r_{\al\be}e_{\al\be},
\end{equation}
for a unique finite subset $S(g)$ of $\Phi$ and unique non-zero $r_{\alpha \beta}\in R$. Moreover,
every such an element is in $M$. The following commutator formula is valid in $M$:
$$
[1+re_{\al\be},1+se_{\be\gamma}]=1+rse_{\al\gamma},\quad
[1+re_{\al\be},1+se_{\gamma\delta}]=1\text{ if
}\be\neq\gamma\text{ and }\al\neq\delta.
$$
For $(\al,\be)\in\Phi$, consider the subgroup $M_{\al\be}$ of $M$ defined by
$$
M_{\al\be}=\{1+r e_{\al\be}\,|\, r\in R\}.
$$
Clearly, the following map is a group isomorphism from $R^+$ onto
$M_{\al\be}$:
\begin{equation}
\label{rma} r\mapsto 1+re_{\al\be},\quad r\in R^+.
\end{equation}

For the remainder of this section we fix a subset $\Gamma$ of
$\Phi$ that is \emph{closed}, in sense that $(\al,\be),(\be,\ga)\in \Gamma$
implies $(\al,\ga)\in \Gamma$. The {\em pattern} subgroup
$M(\Gamma)$ of $M$ corresponding to $\Gamma$ is given by
$$
M(\Gamma)=\langle M_{\al\be}\,|\, (\al,\be)\in\Gamma\rangle.
$$

Suppose $\Gamma\subseteq\Gamma_1$, where $\Gamma_1$ is also a closed subset of $\Phi$.
We say that $\Gamma$ is \emph{normal} in $\Gamma_1$ if $(\al,\be)\in \Gamma, (\be,\ga)\in
\Gamma_1$ implies $(\al,\ga)\in\Gamma$, and $(\al,\be)\in \Gamma_1, (\be,\ga)\in
\Gamma$ implies $(\al,\ga)\in\Gamma$. Note that $M(\Gamma)$ is a
normal subgroup of $M(\Gamma_1)$ if and only if $\Gamma$ is a normal subset of $\Gamma_1$.

We refer to $\Gamma$ as \emph{abelian} if $\Gamma$ contains no chains. Clearly,
$M(\Gamma)$ is an abelian subgroup of $M$ if and only if $\Gamma$ is an abelian subset of $\Phi$.

Given $(\al,\be)\in\Phi$, we consider
$$
[\al,\be]=\{(\gamma,\delta)\in\Phi\,|\, \al\leq \gamma<\delta\leq
\be\},
$$
$$
(\al,\leftarrow)=\{(\gamma,\delta)\in\Phi\,|\, \gamma<\al\},\quad
(\al,\rightarrow)=\{(\gamma,\delta)\in\Phi\,|\, \al<\gamma\},
$$
$$
(\rightarrow,\be)=\{(\gamma,\delta)\in\Phi\,|\, \be< \delta\},
\quad (\leftarrow,\be)=\{(\gamma,\delta)\in\Phi\,|\, \delta<\be
\},
$$
$$
(\downarrow,\be]=\{(\gamma,\delta)\in\Phi\,|\, \delta=\be\},\quad
[\al,\Rightarrow)=\{(\gamma,\delta)\in\Phi\,|\,\gamma=\alpha\},
$$
$$
[\al,\downarrow,\be]=[\al,\be]\cap (\downarrow,\be],
$$
$$
[\al,\rightarrow,\be]=[\al,\be]\cap [\al,\Rightarrow),
$$
as well as obvious variants of these obtained by interchanging
closed and open brackets, in which case
$\leq$ and $<$ are to be interchanged.

At different stages of the paper we will use the basic but
critical fact that $M(\Gamma)$ is actually a McLain group when
$\Gamma$ is any of the subsets $(\rightarrow,\be],
(\rightarrow,\be), (\al,\be)$.

All subsets of $\Gamma$ displayed above are closed. Moreover,
$(\al,\leftarrow)$ and $(\rightarrow,\be)$ are normal in $\Phi$, while
$[\al,\downarrow,\be]$ and $[\al,\rightarrow,\be]$ are abelian as
well as normal in $[\al,\be]$.

For the remainder of this section we suppose that $\leq$ is a total order on $\Lambda$.

\begin{definition}\label{tota} Consider the total order $\preceq$ on $\Phi$ given by
\begin{equation}\label{preo}
(\alpha,\beta) \preceq (\gamma,\delta) \iff \beta < \delta, \mbox{ or } \beta=\delta \mbox{ and } \alpha \le \gamma.
\end{equation}
\end{definition}

The elements of $M(\Gamma)$ can be uniquely expressed relative to this total order.

\begin{prop}\label{co1} Let $g\in M(\Gamma)$ be as in (\ref{geme}). Then
\begin{equation}\label{uni2}
g=\underset{(\al,\be)\in S(g)}\prod (1+r_{\al\be}e_{\al\be}),
\end{equation}
where the product is taken (from left to right) in decreasing $\preceq$-order.
\end{prop}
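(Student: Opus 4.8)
The plan is to prove formula (\ref{uni2}) by induction on $|S(g)|$, the size of the support, using the commutator formula and the uniqueness statement in (\ref{geme}) as the two main ingredients. When $|S(g)| \le 1$ there is nothing to prove. For the inductive step, let $(\al,\be)$ be the $\preceq$-\emph{largest} element of $S(g)$ and write $g = g' \cdot (1+r_{\al\be}e_{\al\be})$; I want to show that $g'$ is again an element of $M(\Gamma)$ whose support is exactly $S(g)\setminus\{(\al,\be)\}$, so that the inductive hypothesis applies to $g'$ and, after right-multiplying its ordered product by the last factor $(1+r_{\al\be}e_{\al\be})$, reconstitutes the claimed decreasing product for $g$.

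The key computation is to expand $g'=g\cdot(1+r_{\al\be}e_{\al\be})^{-1} = g\cdot(1-r_{\al\be}e_{\al\be})$ inside the ring $1+J$. Writing $g = 1+\sum_{(\ga,\de)\in S(g)} r_{\ga\de}e_{\ga\de}$, multiplication on the right by $(1-r_{\al\be}e_{\al\be})$ produces $1 + \sum_{(\ga,\de)\in S(g)\setminus\{(\al,\be)\}} r_{\ga\de}e_{\ga\de} - \sum_{(\ga,\de)\in S(g)} r_{\ga\de}e_{\ga\de}\, r_{\al\be}e_{\al\be}$. In the cross-terms the product $e_{\ga\de}e_{\al\be}$ is nonzero only when $\de=\al$, i.e. when $(\ga,\de)=(\ga,\al)$ with $\ga<\al<\be$; every such $(\ga,\al)$ satisfies $(\ga,\al)\prec(\al,\be)$ because $\al<\be$, and the resulting term lies in position $(\ga,\be)$, which again satisfies $(\ga,\be)\prec(\al,\be)$ since $\ga<\al$. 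Hence no cancellation occurs at position $(\al,\be)$ (its coefficient drops to zero, as desired), the newly created positions $(\ga,\be)$ are all strictly $\prec$-below $(\al,\be)$, and so the support of $g'$ is contained in $\{(\ga,\de)\in\Phi : (\ga,\de)\prec(\al,\be)\}$. One also checks that $g'\in M(\Gamma)$: since $g\in M(\Gamma)$ and $1+r_{\al\be}e_{\al\be}\in M_{\al\be}\subseteq M(\Gamma)$ because $(\al,\be)\in S(g)\subseteq\Gamma$ (the support of any element of $M(\Gamma)$ consists of pairs in $\Gamma$, as $M(\Gamma)$ is generated by the $M_{\ga\de}$ with $(\ga,\de)\in\Gamma$ and $\Gamma$ is closed), $g'$ is a product of two elements of $M(\Gamma)$.

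Now apply the inductive hypothesis to $g'$: it equals the product $\prod_{(\ga,\de)\in S(g')}(1+r'_{\ga\de}e_{\ga\de})$ taken in decreasing $\preceq$-order, where every factor has index $\prec(\al,\be)$. Right-multiplying by $(1+r_{\al\be}e_{\al\be})$ appends this factor as the new $\preceq$-smallest term — but wait, in the product of (\ref{uni2}) taken from left to right in \emph{decreasing} order, the $\preceq$-largest index comes first, so I should instead peel off the $\preceq$-largest element of $S(g)$ from the \emph{left}. Symmetrically, then, I take $(\al,\be)$ to be the $\preceq$-largest element of $S(g)$ and write $g = (1+r_{\al\be}e_{\al\be})\cdot g''$ with $g'' = (1-r_{\al\be}e_{\al\be})g$; the analogous left-multiplication computation shows $e_{\al\be}e_{\ga\de}\neq 0$ only for $\ga=\be$, producing position $(\al,\de)$ with $\de>\be$, so $(\al,\de)\succ(\al,\be)$ — this would push new terms \emph{above} $(\al,\be)$, contradicting maximality. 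The resolution is that for the $\preceq$-largest $(\al,\be)\in S(g)$ no such $(\be,\de)$ can lie in $S(g)$ (it would be $\succ$), hence in fact $e_{\al\be}e_{\ga\de}=0$ for all $(\ga,\de)\in S(g)$ other than when $(\ga,\de)=(\al,\be)$ itself (where $e_{\al\be}e_{\al\be}=0$ too since $\be\neq\al$); therefore $g'' = 1+\sum_{(\ga,\de)\in S(g)\setminus\{(\al,\be)\}} r_{\ga\de}e_{\ga\de}$ has support exactly $S(g)\setminus\{(\al,\be)\}$ and lies in $M(\Gamma)$. The inductive hypothesis gives the ordered product for $g''$, and prepending the factor $(1+r_{\al\be}e_{\al\be})$ for the $\preceq$-largest index yields (\ref{uni2}) for $g$. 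The main obstacle, and the only place care is genuinely required, is the bookkeeping in the cross-term expansion — verifying that the $\preceq$-maximal choice of $(\al,\be)$ forces all potential interference terms $e_{\al\be}e_{\ga\de}$ to vanish — after which the induction closes routinely; uniqueness of the $r_{\al\be}$ and $S(g)$ then follows immediately from the uniqueness already recorded in (\ref{geme}) by expanding the product (\ref{uni2}) back out.
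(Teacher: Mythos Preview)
Your argument is correct and follows essentially the same approach as the paper: an induction on $|S(g)|$ that peels off the $\preceq$-largest factor from the left, the key point being that for the maximal $(\al,\be)\in S(g)$ one has $e_{\al\be}e_{\ga\de}=0$ for every $(\ga,\de)\in S(g)$ (since a nonzero product would require $(\be,\de)\in S(g)$, which is $\succ(\al,\be)$), so that $g''=(1-r_{\al\be}e_{\al\be})g$ has support exactly $S(g)\setminus\{(\al,\be)\}$ with the \emph{same} coefficients $r_{\ga\de}$. The paper organizes this slightly differently --- it first factors off the entire ``last column'' $(1+\sum_\al r_{\al\be_n}e_{\al\be_n})$ (all terms with maximal second coordinate $\be_n$) in one step, and then observes that this column factors trivially as an ordered product because its terms commute --- but both arguments rest on the identical vanishing-of-cross-terms observation.

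One stylistic comment: the written proposal includes a false start (right-peeling the maximal element), which does \emph{not} yield support exactly $S(g)\setminus\{(\al,\be)\}$ as you initially hoped --- the cross-terms can introduce new positions $(\ga,\be)$ or modify existing ones --- and so the induction does not close that way on the exact-coefficient statement (\ref{uni2}). You eventually catch and fix this, but in a final write-up the detour should be removed: start directly with the left-peeling argument.
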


\begin{proof} This is trivial for $g=1$. Assume $g\neq 1$ and suppose the distinct $\beta$'s that occur for $(\alpha,\beta) \in S(g)$ are $\beta_1 < \beta_2 < \dots < \beta_n$. Then
$$g= (1 + \sum_{\alpha} r_{\alpha \beta_n} e_{\alpha \beta_n}) (1 + \sum_{\beta<\beta_n} r_{\alpha \beta} e_{\alpha \beta}).$$

By induction on the size of $S(g)$, the last factor can be expressed as desired.  Let $\alpha_1 < \alpha_2 < \dots < \alpha_m$ be the distinct $\alpha$'s for which $(\alpha,\beta_n) \in S(g)$.  Then
$$ (1 + \sum_{\alpha} r_{\alpha \beta_n} e_{\alpha \beta_n}) = (1 + r_{\alpha_m \beta_n}e_{\alpha_m \beta_n})(1+r_{\alpha_{(m-1)} \beta_n}e_{\alpha_{(m-1)} \beta_n}) \cdot \cdot \cdot (1 + r_{\alpha_1 \beta_n}e_{\alpha_1 \beta_n}). $$
\end{proof}

\begin{note}{\rm Since (\ref{geme}) is uniquely determined by $g$ then so is (\ref{uni2}).}
\end{note}

\begin{prop}\label{trax} Suppose $\Gamma\subseteq\Gamma_1$, where $\Gamma_1$ is a closed subset of $\Phi$
and $\Gamma$ is normal in $\Gamma_1$. Let $\Omega=\Gamma_1\setminus\Gamma$ and set
$$
T=\{1+x\,|\, x\in\mathrm{span}\,(e_{\al\be})_{(\al,\be)\in \Omega}\}.
$$
Then $T$ is transversal for $M(\Gamma)$ in $M(\Gamma_1)$.
\end{prop}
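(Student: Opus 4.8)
The plan is to show that every element of $M(\Gamma_1)$ has a unique factorization as a product of an element of $T$ and an element of $M(\Gamma)$, which is exactly the assertion that $T$ is a (left or right, as the statement intends) transversal. First I would fix notation: write any $g \in M(\Gamma_1)$ in its canonical form $(\ref{geme})$, so $S(g) \subseteq \Gamma_1$, and split $S(g) = (S(g) \cap \Omega) \sqcup (S(g) \cap \Gamma)$. Using Proposition \ref{co1}, $g$ factors as a product over $S(g)$ of elementary factors $1 + r_{\al\be} e_{\al\be}$ taken in decreasing $\preceq$-order. The key structural input is that $\Omega = \Gamma_1 \setminus \Gamma$, while not closed in general, still behaves well against $\Gamma$ because $\Gamma$ is \emph{normal} in $\Gamma_1$: the normality conditions $(\al,\be)\in\Gamma,(\be,\ga)\in\Gamma_1 \Rightarrow (\al,\ga)\in\Gamma$ and $(\al,\be)\in\Gamma_1,(\be,\ga)\in\Gamma\Rightarrow(\al,\ga)\in\Gamma$ are precisely what one needs so that commutators of an $\Omega$-elementary element with a $\Gamma$-elementary element land back inside $M(\Gamma)$ (one of the two commutator formulas from \S\ref{mc} produces $1+rse_{\al\ga}$, and normality forces $(\al,\ga)\in\Gamma$).

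Next I would establish existence of the factorization $g = t h$ with $t \in T$, $h \in M(\Gamma)$. Since $M(\Gamma)$ is normal in $M(\Gamma_1)$ (by the remark in \S\ref{mc}, as $\Gamma$ is normal in $\Gamma_1$), it suffices to check $T$ maps onto the quotient $M(\Gamma_1)/M(\Gamma)$. The cleanest route: take the canonical form of $g$, collect the factors indexed by $S(g)\cap\Omega$ on the left and those indexed by $S(g)\cap\Gamma$ on the right, both in decreasing $\preceq$-order; a priori $g$ equals a $\preceq$-ordered product mixing the two, but using the commutator relations one can move every $\Gamma$-factor to the right past every $\Omega$-factor, at the cost of introducing new factors $1 + rse_{\al\ga}$ which, by the normality of $\Gamma$, have indices in $\Gamma$ and hence get absorbed into the right-hand $M(\Gamma)$ part. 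The left-hand part is then a product of $\Omega$-elementary elements; but I must check such a product lies in $T$, i.e.\ that it has the form $1 + x$ with $x$ supported on $\Omega$ — this requires knowing that products of $\Omega$-elementary elements do not generate support outside $\Omega$. That is where I would use that $\Omega$ inherits enough closure: if $(\al,\be),(\be,\ga)\in\Omega$ then $(\al,\ga)\in\Gamma_1$ (as $\Gamma_1$ is closed), and it cannot be in $\Gamma$ — for if $(\al,\ga)\in\Gamma$ then one of the normality conditions applied with the $\Gamma_1$-pair $(\al,\be)$ or $(\be,\ga)$... actually this needs care: normality of $\Gamma$ in $\Gamma_1$ does not immediately give $(\al,\ga)\in\Gamma \Rightarrow (\al,\be)\in\Gamma$ or $(\be,\ga)\in\Gamma$. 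So the honest statement is that $T$ need not be a subgroup; instead I would argue directly that the \emph{set} $T$ maps bijectively onto $M(\Gamma_1)/M(\Gamma)$ using the unique canonical form.

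The decisive step, and the one I expect to be the main obstacle, is \emph{uniqueness}: if $t_1 h_1 = t_2 h_2$ with $t_i \in T$ and $h_i \in M(\Gamma)$, then $t_1 = t_2$ and $h_1 = h_2$. Equivalently, $T \cap M(\Gamma) = \{1\}$ together with the existence statement. This follows from the uniqueness of the representation $(\ref{geme})$: write $t_2^{-1} t_1 = h_2 h_1^{-1} \in M(\Gamma)$; the left side, being a product of elements of $T$... again, $T$ not being a subgroup means $t_2^{-1}t_1$ need not lie in $T$. The clean argument instead reduces to: given $g \in M(\Gamma_1)$, its image in $M(\Gamma_1)/M(\Gamma)$ is determined, and I claim the coset $g M(\Gamma)$ contains exactly one element of $T$. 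For existence this is the construction above. For uniqueness, if $t, t' \in T$ lie in the same $M(\Gamma)$-coset, then $t (t')^{-1} \in M(\Gamma)$; expanding $t = 1 + x$, $(t')^{-1} = 1 - x' + (\text{higher order in } x')$ with $x, x'$ supported on $\Omega$, and multiplying out, the lowest-degree part of $t(t')^{-1} - 1$ with respect to the chain-length filtration on $J$ is $x - x'$, which is supported on $\Omega$; but $t(t')^{-1} - 1$ must be supported on $\Gamma$ (since $t(t')^{-1} \in M(\Gamma)$ and canonical forms in $M(\Gamma)$ are supported on $\Gamma$), and $\Omega \cap \Gamma = \emptyset$ forces $x - x' = 0$ at lowest degree, then induct on the filtration to conclude $x = x'$. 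I would carry this out via induction on the length of the longest chain appearing in $x$ and $x'$, exactly mirroring the nilpotence lemma at the start of \S\ref{mc}. The bookkeeping of the filtration and the interplay of the commutator formulas with normality is the delicate part; everything else is formal.
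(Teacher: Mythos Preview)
Your proposal is correct. The existence half matches the paper's approach: both use Proposition~\ref{co1} together with the normality $M(\Gamma)\unlhd M(\Gamma_1)$ to collect the $\Gamma$-factors on the right, and the key observation (which you eventually reach after the detour about $T$ not being a subgroup) is that the $\preceq$-decreasing product of the remaining $\Omega$-factors lands back in $T$ because no two factors in such a product can chain.

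For uniqueness the paper takes a different route: rather than your chain-length filtration $J\supseteq J^2\supseteq\cdots$, it peels off the $\preceq$-largest factor in the canonical expressions of $s,t\in T$ and computes the $e_{\al\be}$-coefficient of $st^{-1}$ directly (it turns out to be $a-b$, hence zero since $st^{-1}\in M(\Gamma)$ and $(\al,\be)\notin\Gamma$), then cancels and repeats. Your filtration argument also works---the point you leave implicit is that the images of $J_\Gamma$ and of $J_\Omega$ in each successive quotient $J^k/J^{k+1}$ are spanned by disjoint subsets of a basis---and has the mild advantage of being order-agnostic. Both arguments are more laborious than necessary, however: since $\Gamma$ is normal in $\Gamma_1$, the span $J_\Gamma$ is a two-sided \emph{ideal} of the nil ring $J_{\Gamma_1}$, so $M(\Gamma_1)/M(\Gamma)$ identifies with the adjoint group of $J_{\Gamma_1}/J_\Gamma$, and the $R$-module splitting $J_{\Gamma_1}=J_\Omega\oplus J_\Gamma$ then gives the bijection $T\to M(\Gamma_1)/M(\Gamma)$ in one stroke, with no induction at all.
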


\begin{proof} By Proposition \ref{co1}, $T$ consists of all $g\in M(\Gamma_1)$ as in (\ref{uni2}) with $S(g)\subseteq \Omega$.
Let $y\in M(\Gamma_1)$. Since $M(\Gamma)\unlhd M(\Gamma_1)$,
Proposition \ref{co1} ensures the existence of $t\in T$ and $h\in M(\Gamma)$ such
that $y=th$. Suppose $s,t\in T$ and $sM(\Gamma)=tM(\Gamma)$.
We claim that $s=t$. Indeed, let
\begin{equation}\label{lefmo}
s=(1+ae_{\al\be})u\text{ and }t=(1+be_{\al\be})v
\end{equation}
be the canonical expressions of $s$ and $t$ as elements of $M(\Gamma_1)$ ensured by Proposition~\ref{co1}, where $(\al,\be)\in\Omega$
and $u,v$ involve $(\ga,\de)\in\Omega$ such that $(\ga,\de)\prec (\al,\be)$ as defined by (\ref{preo}). Possible trivial
factors have been allowed so as to treat $s$ and $t$ simultaneously. Then
\begin{equation}\label{cuad}
st^{-1}=(1+ae_{\al\be})(1-be_{\al\be})w,
\end{equation}
where
$$
w=(1+be_{\al\be})uv^{-1}(1+be_{\al\be})^{-1}.
$$
Here $uv^{-1}$ is a product of  factors $1+ce_{\ga\de}$ with $(\ga,\de)\prec (\al,\be)$. In particular, $\ga<\be$, so either
$1+be_{\al\be}$ and $1+ce_{\ga\de}$ commute or $\ga<\de=\al$, in which case
$$
(1+be_{\al\be})(1+ce_{\ga\al})(1+be_{\al\be})^{-1}=(1+ce_{\ga\al})(1-cbe_{\ga\be}).
$$
Thus $w=1+x$, where $x$ is an $R$-linear combination of $e_{\ga\de}$ with $(\ga,\de)\prec (\al,\be)$. Therefore by (\ref{cuad}),
$$
st^{-1}=(1+(a-b)e_{\al\be})(1+x)=1+(a-b)e_{\al\be}+x.
$$
On the other hand, since $M(\Gamma)s=M(\Gamma)t$ and $\Omega\cap\Gamma=\emptyset$, the coefficient of $e_{\al\be}$ when
$st^{-1}\in M(\Gamma)$ is written as in (\ref{geme}) must be 0. This shows $a=b$. Now cancel the leftmost factors of $s$ and $t$ in (\ref{lefmo}) and repeat.
\end{proof}

\begin{definition} Given a subset $\Delta$ of $\Phi$ we define $[\Delta,\Delta]$ to be the closed subset of $\Phi$
consisting of all $(\al,\be)$ such that there is a chain $(\ga_1,\ga_2),\dots,(\ga_{n-1},\ga_n)\in \Delta$, $n\geq 3$,
satisfying $\ga_1=\al$ and $\ga_n=\be$.
\end{definition}

\begin{prop}\label{co2} (a) $[M(\Gamma),M(\Gamma)]=M([\Gamma,\Gamma])$.

(b) Let $\rho:M(\Gamma)\to M(\Gamma)/[M(\Gamma),M(\Gamma)]$ be the canonical projection, and consider the map
$$\Theta: \underset{(\al,\be)\in\Gamma\setminus [\Gamma,\Gamma]}\prod
M_{\al\be}\to M(\Gamma)/[M(\Gamma),M(\Gamma)],$$
where the left hand side is the external direct product of the given $M_{\al\be}$, and $\Theta$ is defined by
$$
\underset{(\al,\be)\in\Gamma\setminus[\Gamma,\Gamma]}\prod (1+r_{\al\be}e_{\al\be})\to \rho(\underset{(\al,\be)\in\Gamma\setminus[\Gamma,\Gamma]}\prod (1+r_{\al\be}e_{\al\be})).
$$
Here almost all $r_{\al\be}$ are 0 and the product on the right hand side is taken in decreasing order under $\preceq$
(the order on the left hand side is obviously irrelevant). Then $\Theta$ is a group isomorphism.
\end{prop}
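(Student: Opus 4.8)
The plan is to prove (a) first and then leverage it to establish (b). For (a), the inclusion $M([\Gamma,\Gamma])\subseteq[M(\Gamma),M(\Gamma)]$ follows directly from the commutator formula $[1+re_{\al\be},1+se_{\be\ga}]=1+rse_{\al\ga}$: if $(\al,\be)\in[\Gamma,\Gamma]$ via a chain $(\ga_1,\ga_2),\dots,(\ga_{n-1},\ga_n)$ in $\Gamma$, then iterating this commutator identity (and using that each $M_{\ga_i\ga_{i+1}}\leq M(\Gamma)$, together with the fact that $1+re_{\al\be}$ ranges over all of $M_{\al\be}$ as we scale one of the entries) exhibits $1+re_{\al\be}$ as a product of commutators in $M(\Gamma)$; since $[M(\Gamma),M(\Gamma)]$ is a subgroup and $[\Gamma,\Gamma]$ is closed, all of $M([\Gamma,\Gamma])$ is captured. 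For the reverse inclusion, I would argue that $M([\Gamma,\Gamma])$ is normal in $M(\Gamma)$ (because $[\Gamma,\Gamma]$ is normal in $\Gamma$ — any chain can be extended on either end by an element of $\Gamma$, staying inside $[\Gamma,\Gamma]$) and that the quotient $M(\Gamma)/M([\Gamma,\Gamma])$ is abelian. Abelianness of the quotient is seen from the commutator formula again: for generators $1+re_{\al\be}$ and $1+se_{\ga\de}$ of $M(\Gamma)$, the commutator is trivial unless $\be=\ga$ (or $\al=\de$), and in the non-trivial case it lies in $M_{\al\de}$ with $(\al,\de)\in[\Gamma,\Gamma]$, hence is killed in the quotient. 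Since $M(\Gamma)$ is generated by the $M_{\al\be}$, the quotient is generated by commuting elements, so it is abelian; therefore $[M(\Gamma),M(\Gamma)]\subseteq M([\Gamma,\Gamma])$.

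For (b), write $\Gamma^{\circ}=\Gamma\setminus[\Gamma,\Gamma]$. First I would check $\Theta$ is well-defined: by Proposition~\ref{co1} every element of $M(\Gamma)$ has a unique canonical factorization in decreasing $\preceq$-order over $S(g)\subseteq\Gamma$, so the given product over $\Gamma^{\circ}$ is an unambiguous element of $M(\Gamma)$, and composing with $\rho$ gives a well-defined map. That $\Theta$ is a homomorphism follows from part (a): modulo $[M(\Gamma),M(\Gamma)]=M([\Gamma,\Gamma])$ the generators $M_{\al\be}$ with $(\al,\be)\in\Gamma^{\circ}$ commute with one another (any non-trivial commutator among them lands in $M([\Gamma,\Gamma])$), so $\rho$ is multiplicative on products over $\Gamma^{\circ}$ regardless of the order in which they are taken, matching the group law on the external direct product. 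Surjectivity of $\Theta$ is the observation that $M(\Gamma)$ is generated by all $M_{\al\be}$, $(\al,\be)\in\Gamma$, and those with $(\al,\be)\in[\Gamma,\Gamma]$ are already trivial in the quotient; combined with commutativity of the image, every element of $M(\Gamma)/[M(\Gamma),M(\Gamma)]$ is $\rho$ of a product of the $\Gamma^{\circ}$-generators, which is in the image of $\Theta$.

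The main obstacle is injectivity of $\Theta$, i.e.\ showing the images $\rho(M_{\al\be})$, $(\al,\be)\in\Gamma^{\circ}$, are independent in the abelian group $M(\Gamma)/[M(\Gamma),M(\Gamma)]$. The plan here is: suppose $\prod_{(\al,\be)\in\Gamma^{\circ}}(1+r_{\al\be}e_{\al\be})\in M([\Gamma,\Gamma])$. Using Proposition~\ref{trax} with $\Gamma_1=\Gamma$ and the normal closed subset $[\Gamma,\Gamma]$ of $\Gamma$, the set $T$ built from $\Omega=\Gamma\setminus[\Gamma,\Gamma]=\Gamma^{\circ}$ is a transversal for $M([\Gamma,\Gamma])$ in $M(\Gamma)$; the product above lies in $T$, and the only element of $T$ lying in $M([\Gamma,\Gamma])$ is the identity (since $T\cap M([\Gamma,\Gamma])$ is a single coset representative, namely of the trivial coset). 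Hence all $r_{\al\be}=0$, which is exactly the statement that $\ke\Theta$ is trivial. I should double-check that Proposition~\ref{trax} indeed applies — that $[\Gamma,\Gamma]$ is closed (it is, by its definition, concatenation of chains) and normal in $\Gamma$ (extend a witnessing chain on either end) — and that the canonical form used to define $T$ there is the same decreasing-$\preceq$ factorization used in the statement of $\Theta$; both are guaranteed by Propositions~\ref{co1} and~\ref{trax}. With injectivity in hand, $\Theta$ is a bijective homomorphism, hence an isomorphism.
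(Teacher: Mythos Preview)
Your proposal is correct and follows exactly the route the paper takes: part (a) via the commutator formula (one inclusion by iterated commutators, the other by showing the quotient $M(\Gamma)/M([\Gamma,\Gamma])$ is abelian), and part (b) by invoking Propositions~\ref{co1} and~\ref{trax} with the normal closed subset $[\Gamma,\Gamma]\subseteq\Gamma$ to identify the transversal $T$ with the domain of $\Theta$. Your explicit verification that $[\Gamma,\Gamma]$ is closed and normal in $\Gamma$, and that the decreasing-$\preceq$ product lands in $T$, fills in precisely the details the paper leaves implicit.
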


\begin{proof} (a) By definition, $M([\Gamma,\Gamma])$ is generated by all $M_{\sigma\tau}$ with $(\sigma,\tau)\in [\Gamma,\Gamma]$.
Given such $(\sigma,\tau)$ there are $(\al_1,\al_2),\dots,(\al_{n-1},\al_n)\in\Gamma$, $n\geq 3$, such that $\al_1=\sigma$ and~$\al_n=\tau$.
A repeated application of the commutator formula shows that $M_{\sigma\tau}$ is contained in $[M(\Gamma),M(\Gamma)]$, and therefore
$M([\Gamma,\Gamma])\subseteq [M(\Gamma),M(\Gamma)]$.

On the other hand, $M(\Gamma)$ is generated by all $M_{\al\be}$, $(\al,\be)\in \Gamma$.
Thus, by the commutator formula, $M(\Gamma)/M([\Gamma,\Gamma])$ is abelian, whence $[M(\Gamma),M(\Gamma)]\subseteq M([\Gamma,\Gamma])$.

(b) This follows from Proposition \ref{co1} as well as from Proposition \ref{trax}.
\end{proof}

\section{Clifford theory for infinite dimensional modules}\label{clif}

We fix a field $F$ for the remainder of the paper.
Let $N\unlhd G$ be groups and let $W$ be an $N$-module. For $g\in
G$, consider the $N$-module~$W^g$, whose underlying $F$-vector
space is $W$, acted upon by $N$ as follows:
$$
x\cdot w= (g x g^{-1})w,\quad  x\in N,w\in W.
$$
Then
$$
I_G(W)=\{g\in G\,|\, W^g\cong W\}
$$
is a subgroup of $G$ called the inertia group of $W$ (cf.
\cite[Lemma 3.1]{Sz}.

\begin{theorem}\label{cl2} Let $N\unlhd G$ be groups and
let $W$ be an irreducible $N$-module with inertia group $T$. Then
$S\mapsto\ind_T^G S$ yields a bijective correspondence between the
isomorphism classes of irreducible modules of $T$ and $G$ lying
over $W$.

In particular, if $I_G(W)=N$ then $V=\ind_N^G W$ is irreducible
and if, in addition, $\End_N(W)=F$ then $\End_G(V)=F$ as well.
\end{theorem}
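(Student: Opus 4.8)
The plan is to prove the Clifford correspondence (Theorem \ref{cl2}) in this possibly infinite-dimensional setting by following the classical route, but taking care that all the standard finite-group arguments have been replaced by statements valid here. The paper explicitly says this section essentially reproduces results from \cite{Sz}, so I would lean on that reference for the basic machinery: the well-definedness of $W^g$, the fact that $I_G(W)$ is a subgroup, Frobenius reciprocity for possibly infinite-dimensional modules, and a Mackey-type decomposition of $\res_N \ind_T^G S$. The key structural input is that any irreducible $G$-module $V$ lying over $W$ has a well-behaved $W$-isotypic (or better, $W$-homogeneous) component, even though $\res_N V$ need not be semisimple — one must isolate the sum of all $N$-submodules isomorphic to $W$, call it $V_W$, and show it is nonzero and a $T$-submodule of $V$.

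First I would set up the two maps. Given an irreducible $T$-module $S$ lying over $W$, the module $\ind_T^G S$ is shown to be irreducible and to lie over $W$; this is the harder direction and I address it below. Conversely, given an irreducible $G$-module $V$ lying over $W$, form $V_W$, the span of all $N$-submodules of $V$ isomorphic to $W$. One checks $V_W \neq 0$ (by hypothesis $V$ lies over $W$), that $V_W$ is $G$-stable as a set under the $T$-action only — precisely because $g\in T$ means $W^g\cong W$, so $g\cdot(W\text{-submodule})$ is again a $W$-submodule — hence $V_W$ is a $T$-submodule. Then I would argue $V_W$ is an irreducible $T$-module (if $0\neq U \subsetneq V_W$ were a $T$-submodule, then $\ind_T^G U$ would map onto $V$ but also land inside the proper submodule generated by $U$ under the coset representatives, contradicting irreducibility of $V$), and that $\ind_T^G V_W \cong V$ via the canonical $G$-map $\ind_T^G V_W \to V$, which is surjective since its image is a nonzero $G$-submodule, and injective by a length/Mackey comparison showing $\ind_T^G V_W$ has no proper submodule mapping onto $V$. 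Finally, $S\mapsto \ind_T^G S$ and $V\mapsto V_W$ are mutually inverse up to isomorphism.

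For the irreducibility of $\ind_T^G S$: write $V=\ind_T^G S$ and use the Mackey-style description of $\res_N V$ as a sum of modules of the form ${}^gW$-type pieces, indexed by $N$-$T$ double cosets; since $I_G(W)=\{g : W^g\cong W\}$ contains $N$ and the only double coset contributing a copy isomorphic to $W$ is the trivial one, one sees that the $W$-homogeneous part of $\res_N V$ is exactly the image of $S$. Any nonzero $G$-submodule $U\subseteq V$ meets this $W$-homogeneous part nontrivially (as $U$ lies over $W$), hence contains a copy of $S$ inside the canonical image of $S$, and by irreducibility of $S$ as a $T$-module and the generation of $V$ by $GS$, we get $U=V$. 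The main obstacle, and the place I expect to spend real care, is exactly this restriction/Mackey step in the infinite-dimensional setting: one cannot invoke semisimplicity of $\res_N V$ or count dimensions, so the argument must go through $\Hom_{FN}$ computations and the structure theorems of \cite{Sz} (Frobenius reciprocity and the double-coset formula there) rather than through module decompositions.

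For the ``in particular'' clause, specialize to $T=N$. Then $V=\ind_N^G W$, and by what was just shown $V$ is irreducible. For the endomorphism claim, assume $\End_N(W)=F$. By Frobenius reciprocity (valid here by \cite{Sz}), $\End_G(\ind_N^G W) = \Hom_{FG}(\ind_N^G W,\ind_N^G W)\cong \Hom_{FN}(W,\res_N\ind_N^G W)$, and the right-hand side, by the Mackey decomposition and the fact that the only double coset contributing a $W$-homogeneous summand is the trivial one (since $I_G(W)=N$), equals $\Hom_{FN}(W,W)=\End_N(W)=F$. The only subtlety is making sure the Mackey decomposition isolates a single summand isomorphic to $W$ rather than a $W$-isotypic block of higher multiplicity — this is guaranteed precisely by $I_G(W)=N$, which forces every nontrivial double coset representative $g$ to give ${}^gW \not\cong W$ and hence $\Hom_{FN}(W, \text{that summand})=0$. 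I would present this computation as the clean payoff, citing the relevant lemma of \cite{Sz} for the double-coset formula.
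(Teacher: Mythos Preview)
Your proposal is correct and matches the paper's approach: the paper simply cites \cite[Theorem~3.5]{Sz} for the Clifford bijection and then deduces the endomorphism statement via Frobenius reciprocity, writing $\End_G(V)\cong_F\Hom_N(W,V)=\End_N(W)$, which is exactly your final paragraph. Your more detailed sketch of the bijection itself (isolating the $W$-homogeneous component $V_W$, proving it is an irreducible $T$-module, and checking that $S\mapsto\ind_T^G S$ and $V\mapsto V_W$ are mutually inverse) is the standard Clifford argument that \cite{Sz} carries out and to which the paper defers; one small remark is that your caution about semisimplicity of $\res_N V$ is in fact unnecessary here, since once you know $V=\sum_g gV_W$ with the $gV_W$ isotypic for pairwise non-isomorphic conjugates of $W$, the sum is direct and $\res_N V$ is automatically semisimple.
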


\begin{proof} See \cite[Theorem 3.5]{Sz} for the first assertion. As for
the second, by Frobenius reciprocity (cf. \cite[Theorem 5.3]{Sz}),
$\End_N(W)=\Hom_N(W,V)\cong_F\End_G(V)$.
\end{proof}

\begin{lemma}\label{forma} Let $N\unlhd G$ be groups and let $W$ be an irreducible $G$-module
such that $\res_N^G W$ remains irreducible and $\End_N(W)=F$. Let
$U_1,U_2$ be $G$-modules acted upon trivially by $N$ and suppose
$T\in\Hom_G(U_1\otimes W,U_2\otimes W)$. Then $T=S\otimes 1$,
where $S\in\Hom_{G/N}(U_1,U_2)$.
\end{lemma}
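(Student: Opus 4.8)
The plan is to exploit the rigidity forced by the hypotheses: $\res_N^G W$ is irreducible with $\End_N(W)=F$, and $N$ acts trivially on $U_1,U_2$. First I would fix $F$-bases $\{u_i\}_{i\in I}$ of $U_1$ and $\{u'_j\}_{j\in J}$ of $U_2$, and write any $F$-linear map $T:U_1\otimes W\to U_2\otimes W$ in ``matrix form'' $T(u_i\otimes w)=\sum_j u'_j\otimes T_{ji}(w)$, where for each pair $(i,j)$ all but finitely many of the $T_{ji}$ are nonzero on a given $w$, and each $T_{ji}\in\End_F(W)$. The point is to show each $T_{ji}$ lies in $\End_N(W)$, hence equals a scalar $s_{ji}\in F$ by hypothesis, and that the resulting matrix $(s_{ji})$ defines a map $S\in\Hom_{G/N}(U_1,U_2)$ with $T=S\otimes 1$.

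The key step is $N$-equivariance of the $T_{ji}$. For $x\in N$ and $w\in W$, apply $T$ to $x\cdot(u_i\otimes w)=u_i\otimes(xw)$ (using triviality of the $N$-action on $U_1$); comparing coefficients of $u'_j$ on both sides of $T(x\cdot(u_i\otimes w))=x\cdot T(u_i\otimes w)$ and again using triviality of the $N$-action on $U_2$ gives $T_{ji}(xw)=x\,T_{ji}(w)$, i.e. $T_{ji}\in\End_N(W)=F$. So write $T_{ji}=s_{ji}\cdot\mathrm{id}_W$. Since $T$ is a well-defined linear map, for each $i$ the family $(s_{ji})_{j\in J}$ has finite support, so $S(u_i):=\sum_j s_{ji}u'_j$ defines an $F$-linear map $S:U_1\to U_2$, and by construction $T=S\otimes 1_W$.

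It remains to check $S$ is $G/N$-equivariant, equivalently that $S:U_1\to U_2$ is a $G$-module map (it automatically factors through $G/N$ since both $U_i$ are $N$-trivial). Here I would use that $W$ is a nonzero $G$-module and that $T=S\otimes 1_W$ is $G$-equivariant: for $g\in G$, $w\in W$, $u\in U_1$ we have $(S\otimes 1)(gu\otimes gw)=g\big((S\otimes 1)(u\otimes gw)\big)$ wait — more cleanly, $(S\otimes 1)(g\cdot(u\otimes w))=g\cdot\big((S\otimes 1)(u\otimes w)\big)$ expands, via the diagonal $G$-action on each side, to $S(gu)\otimes gw=g S(u)\otimes gw$ after using $g$-equivariance, i.e. $\big(S(gu)-gS(u)\big)\otimes gw=0$ in $U_2\otimes W$ for all $w$. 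Since $gw$ ranges over all of $W$ and $W\neq 0$, and $U_2\otimes W$ is a free $W$-module on a basis of $U_2$ (so $v\otimes w'=0$ for all $w'$ forces $v=0$), this yields $S(gu)=gS(u)$. Hence $S\in\Hom_G(U_1,U_2)=\Hom_{G/N}(U_1,U_2)$, as required.

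The main obstacle is purely bookkeeping rather than conceptual: since $U_1,U_2$ may be infinite-dimensional, one must be careful that ``matrix coefficients'' are well-defined and have the appropriate finiteness (each element of $U_1\otimes W$ is a finite sum, and $T$ of it is again a finite sum), so that $S$ is genuinely a linear map and not merely a formal array. Notably, the irreducibility of $\res_N^G W$ is used only implicitly — the real work is done by $\End_N(W)=F$ together with $N$ acting trivially on $U_1,U_2$ — so I would double-check whether irreducibility of the restriction is actually needed here or is inherited from the ambient setup for later applications; in any case assuming it causes no harm.
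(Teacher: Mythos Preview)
Your proof is correct and follows the same route as the paper's, only with the key step made explicit: the paper simply cites \cite[Lemma 3.7]{Sz} to conclude that any $N$-equivariant map $U_1\otimes W\to U_2\otimes W$ has the form $S\otimes 1_W$ with $S\in\Hom_F(U_1,U_2)$, and then observes that $G$-equivariance of $T$ forces $S\in\Hom_{G/N}(U_1,U_2)$; you have unpacked exactly what that cited lemma says via the matrix-coefficient argument. Your closing remark is also on point: the argument uses only $\End_N(W)=F$ and the triviality of the $N$-action on the $U_i$, not the irreducibility of $\res_N^G W$ per se.
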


\begin{proof} Since $T\in\Hom_N(U_1\otimes W,U_2\otimes W)$, \cite[Lemma 3.7]{Sz}
implies $T=S\otimes 1$, where $S\in\Hom_F(U_1,U_2)$. But
$T\in\Hom_G(U_1\otimes W,U_2\otimes W)$, so
$S\in\Hom_{G/N}(U_1,U_2)$.
\end{proof}

\begin{cor}\label{arreglo}  Let $N\unlhd G$ be groups and let $W$ be a $G$-module
such that $\res_N^G W$ is a multiplicity-free, completely
reducible $N$-module, such that  $\End_N(X)=F$ for every
irreducible constituent $X$ of $\res_N^G W$. Let $U_1,U_2$ be
$G$-modules acted upon trivially by $N$ and suppose
$T\in\Hom_G(U_1\otimes W,U_2\otimes W)$. Then $T=S\otimes 1$,
where $S\in\Hom_{G/N}(U_1,U_2)$.
\end{cor}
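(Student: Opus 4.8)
The plan is to reduce Corollary \ref{arreglo} to Lemma \ref{forma} by decomposing $\res_N^G W$ into its homogeneous (here, one-dimensional-multiplicity) isotypic pieces and treating one piece at a time. Write $\res_N^G W=X_1\oplus\cdots\oplus X_k$ as a direct sum of pairwise non-isomorphic irreducible $N$-modules, each with $\End_N(X_j)=F$. This decomposition need not be $G$-stable, but $G$ permutes the isomorphism classes of the $X_j$, hence permutes the isotypic components, which here coincide with the individual summands since the restriction is multiplicity-free. The key structural observation I would establish first is that for each $j$ the $N$-submodule $U_i\otimes X_j\subseteq U_i\otimes W$ is the full $X_j$-isotypic component of $\res_N^G(U_i\otimes W)$ (using that $N$ acts trivially on $U_i$, so $U_i\otimes X_j$ is $X_j$-isotypic, and a dimension/multiplicity count shows nothing else lies over $X_j$). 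Consequently any $T\in\Hom_G(U_1\otimes W,U_2\otimes W)$, being in particular an $N$-map, sends the $X_j$-isotypic component of the source into that of the target, i.e. $T(U_1\otimes X_j)\subseteq U_2\otimes X_j$ for each $j$.

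Next I would restrict attention to the stabilizer. Fix $j$ and let $T_j=I_G(X_j)$ be the inertia group of $X_j$ in $G$; since $N$ acts irreducibly and $\End_N(X_j)=F$, and $X_j$ extends to no larger piece, we still have $\res_N^{T_j}(U_i\otimes X_j)$ behaving as in Lemma \ref{forma} provided $U_i$ is viewed as a $T_j$-module. The subtlety is that $U_i\otimes X_j$ is a $T_j$-module but typically not a $G$-module, and $W$ itself restricted to $T_j$ may be larger than $X_j$. So instead I would argue directly on $G$ without passing to inertia groups, as follows: the map $T$ is $F$-linear and $N$-equivariant, so by \cite[Lemma 3.7]{Sz} applied componentwise — once we know $T$ preserves the isotypic decomposition — we get, for each $j$, an $F$-linear map $S_j:U_1\to U_2$ with $T|_{U_1\otimes X_j}=S_j\otimes 1_{X_j}$. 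The remaining task is to show all the $S_j$ coincide with a single $S$ and that $S$ is $G/N$-equivariant.

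To see that $S_j$ is independent of $j$: pick $g\in G$ with $gX_jg^{-1}\cong X_\ell$ as $N$-modules, choose an $N$-isomorphism $\phi:X_j^g\to X_\ell$ (well-defined up to scalar, by $\End_N=F$), and exploit $G$-equivariance of $T$ together with the fact that $g$ acts on $U_i\otimes W$ as $g|_{U_i}\otimes g|_W$ where $g|_W$ carries $X_j$ to $X_\ell$ compatibly with $\phi$. Chasing this diagram forces $S_\ell = (g|_{U_2})\circ S_j\circ (g|_{U_1})^{-1}$ whenever $g$ moves the $j$-th component to the $\ell$-th. Since $G$ acts transitively within each $G$-orbit of components and $W$ is irreducible (so there is a single orbit — otherwise $W$ would be decomposable as a $G$-module), this determines every $S_\ell$ from $S_1$; specializing to $g$ stabilizing the first component (i.e. $g\in I_G(X_1)$) gives the equivariance of $S:=S_1$ under that stabilizer, and the transitivity relations then upgrade this to full $G/N$-equivariance of the globally-defined $S$ with $T=S\otimes 1_W$.

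The main obstacle I anticipate is precisely the bookkeeping in the last paragraph: making the orbit/transitivity argument clean enough that the scalar ambiguities in the $N$-isomorphisms $\phi$ cancel and one genuinely obtains a \emph{single} well-defined $S\in\Hom_{G/N}(U_1,U_2)$ rather than a cocycle's worth of maps. A slicker route, which I would try first, is to avoid orbits entirely: note that $U_i\otimes W$ with its $G$-action and the hypotheses on $\res_N^G W$ put us exactly in the situation where \cite[Lemma 3.7]{Sz} (the structural lemma underlying Lemma \ref{forma}) can be invoked for the pair $(N\unlhd G, \res_N^G W)$ directly, because that lemma's proof only uses that $\res_N^G W$ is completely reducible with $\End_N$-scalar, multiplicity-free constituents — not irreducibility of the restriction. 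If \cite[Lemma 3.7]{Sz} is stated at that level of generality, the corollary is immediate: $T=S\otimes 1$ with $S\in\Hom_F(U_1,U_2)$ from that lemma, and then $T\in\Hom_G$ forces $S\in\Hom_{G/N}(U_1,U_2)$ exactly as in the proof of Lemma \ref{forma}. I would check the precise hypotheses of \cite[Lemma 3.7]{Sz} and, if they match, present this one-line reduction; otherwise fall back on the isotypic-decomposition argument above.
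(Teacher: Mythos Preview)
The paper gives no proof here; it records the statement as an immediate corollary. Your ``slicker route'' --- hoping that \cite[Lemma~3.7]{Sz} already yields $T=S\otimes 1_W$ from $N$-equivariance alone under the multiplicity-free hypothesis --- cannot succeed, irrespective of how that lemma is phrased. The obstruction is intrinsic: when $\res_N^G W=X_1\oplus\cdots\oplus X_k$ with $k\ge 2$ one has $\End_N(W)\cong F^k$, and a general $N$-map $U_1\otimes W\to U_2\otimes W$ has the form $\sum_j S_j\otimes p_j$ (with $p_j$ the $N$-projector onto $X_j$) for \emph{independent} linear maps $S_j:U_1\to U_2$. No statement about $N$-maps can force the $S_j$ to coincide; only the $G$-action can do that. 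So the one-line reduction you would try first is unavailable in principle, not merely pending a hypothesis check.

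Your first approach --- isotypic decomposition followed by the orbit argument --- is the correct one, but it requires $G$ to permute the $X_j$ transitively, which under the multiplicity-free hypothesis is equivalent to $W$ being an \emph{irreducible} $G$-module. You invoke this in passing, yet the stated corollary omits it, and without it the conclusion is actually false: take $G=N$ cyclic of order~$2$, let $W$ be the direct sum of the two linear characters, and let $U_1=U_2=F$; then $\End_G(U_1\otimes W)\cong F^2$, whereas $\{S\otimes 1_W : S\in\Hom_{G/N}(F,F)\}$ is one-dimensional. The paper's sole use of the corollary (in the proof of Theorem~\ref{disy}) has $W=V(\al_1,\be_1,\lm_1)$, which is $G$-irreducible, so the missing hypothesis is surely intended (it is present in Lemma~\ref{forma}). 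With that hypothesis restored, your orbit argument goes through cleanly; the scalar ambiguity you flag in choosing an $N$-isomorphism $X_j^g\cong X_\ell$ is harmless, since the relation $S_\ell=(g|_{U_2})\,S_j\,(g|_{U_1})^{-1}$ is insensitive to rescaling that isomorphism.
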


\begin{theorem}\label{cli4}  Let $N\unlhd G$ be groups and let $W$ be an irreducible
$G$-module with $\res_N^G W$  irreducible and $\End_N(W)=F$. Then $U\mapsto U\otimes W$
yields a bijective correspondence between the
isomorphism classes of irreducible $G$-modules acted upon
trivially by $N$ and irreducible $G$-modules lying over $W$.
\end{theorem}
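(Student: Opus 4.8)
The plan is to establish the bijection $U\mapsto U\otimes W$ by constructing an explicit inverse and then verifying that both directions preserve irreducibility, respect isomorphism, and compose to the identity. First I would check that $U\otimes W$ really is an irreducible $G$-module lying over $W$ whenever $U$ is an irreducible $G$-module on which $N$ acts trivially: since $N$ acts trivially on $U$, we have $\res_N^G(U\otimes W)\cong (\dm U)\cdot \res_N^G W$ as $N$-modules, so every irreducible constituent is isomorphic to $W$, i.e.\ $U\otimes W$ lies over $W$; irreducibility of $U\otimes W$ as a $G$-module follows from Corollary \ref{arreglo} (applied with $U_1=U_2=U$), which forces $\End_G(U\otimes W)$ to be a quotient ring of $\End_{G/N}(U)=F$, and a nonzero $G$-submodule would produce a nontrivial idempotent there — alternatively, one appeals directly to Lemma \ref{forma} to see that any $G$-endomorphism of $U\otimes W$ has the form $S\otimes 1$ with $S\in\End_{G/N}(U)=F$, and then a short Schur-type argument gives irreducibility.

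Next I would construct the inverse map. Given an irreducible $G$-module $V$ lying over $W$, I claim $V\cong \Hom_N(W,V)\otimes W$, where $U:=\Hom_N(W,V)$ is made into a $G$-module via $(g\cdot \varphi)(w)=g\varphi(g^{-1}w)$ and is acted on trivially by $N$ (because $N$ acts on $W$ and on $V$ compatibly: for $n\in N$, $(n\cdot\varphi)(w)=n\varphi(n^{-1}w)=\varphi(w)$ since $\varphi$ is $N$-linear). The evaluation map $\Hom_N(W,V)\otimes W\to V$, $\varphi\otimes w\mapsto \varphi(w)$, is a nonzero $G$-homomorphism; since $\res_N^G W$ is irreducible with $\End_N(W)=F$, the $N$-socle computation (every $N$-irreducible constituent of $V$ is $\cong W$, so $V\cong \Hom_N(W,V)\otimes W$ as $N$-modules by the standard isotypic decomposition, this being exactly \cite[Lemma 3.7]{Sz} applied with $V$ in place of $U_2\otimes W$) shows the evaluation map is an $N$-isomorphism, hence a $G$-isomorphism. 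Thus every $V$ over $W$ arises as $U\otimes W$, and $U$ is recovered as $\Hom_N(W,V)$.

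Then I would verify the two compositions are the identity up to isomorphism. Starting from an irreducible $U$ with $N$ trivial: $\Hom_N(W, U\otimes W)\cong U\otimes \Hom_N(W,W)=U\otimes F\cong U$ as $G/N$-modules, using again that $N$ acts trivially on $U$ so it can be pulled out of the $\Hom_N$, and $\Hom_N(W,W)=\End_N(W)=F$. Starting from $V$ over $W$: the previous paragraph gives $\Hom_N(W,V)\otimes W\cong V$. Finally, that the correspondence is well-defined on isomorphism classes (isomorphic $U$'s give isomorphic $U\otimes W$'s, and conversely $U_1\otimes W\cong U_2\otimes W$ forces $U_1\cong U_2$) follows from Lemma \ref{forma}: a $G$-isomorphism $U_1\otimes W\to U_2\otimes W$ must be $S\otimes 1$ with $S\in\Hom_{G/N}(U_1,U_2)$, and $S$ is then forced to be an isomorphism.

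The main obstacle is the bookkeeping around the identification $V\cong \Hom_N(W,V)\otimes W$ in sufficient generality: I must be careful that the isotypic-component argument, which in the finite-dimensional setting is routine, genuinely goes through for possibly infinite-dimensional $V$ — but this is exactly the content already packaged in \cite[Lemma 3.7]{Sz} (as used in the proof of Lemma \ref{forma}), so the real work is just to phrase the evaluation map correctly and confirm its $G$-equivariance and bijectivity. Everything else is formal manipulation of $\Hom$ and $\otimes$ together with $\End_N(W)=F$, and no new input beyond Lemma \ref{forma} and Corollary \ref{arreglo} is needed.
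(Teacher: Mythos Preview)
Your argument for the irreducibility of $U\otimes W$ has a genuine gap: knowing that $\End_G(U\otimes W)$ is a field does \emph{not} imply irreducibility (the converse of Schur's lemma fails in general), and a proper nonzero submodule produces a nontrivial idempotent in the endomorphism ring only when it admits a complement, which you have not established. (Incidentally, $\End_{G/N}(U)=F$ is not among the hypotheses, so even the computation $\End_G(U\otimes W)=F$ is unjustified as stated.) The fix, however, is already implicit in your construction of the inverse: for a nonzero $G$-submodule $X\subseteq U\otimes W$, the $N$-module $X$ is $W$-isotypic, so $\Hom_N(W,X)$ is a nonzero $G/N$-submodule of $\Hom_N(W,U\otimes W)\cong U$; irreducibility of $U$ forces equality, and the evaluation isomorphism then gives $X=U\otimes W$. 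A parallel remark applies to your surjectivity step: for an irreducible $V$ lying over $W$ you need $\res_N^G V$ to be $W$-isotypic before the evaluation map can be an isomorphism. This holds because the $W$-isotypic component of $\res_N^G V$ is $G$-stable (since $W$ is $G$-invariant) and nonzero, hence equals $V$ by irreducibility of $V$---but this should be made explicit, since \cite[Lemma 3.7]{Sz} as invoked in Lemma~\ref{forma} is phrased for modules already presented in the form $U_i\otimes W$.

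With these two repairs your route via the explicit inverse $V\mapsto\Hom_N(W,V)$ is correct and is the standard proof. The paper itself gives no argument here, simply citing \cite[Theorem~3.11]{Sz}, so there is nothing to compare beyond noting that your sketch unpacks what that citation presumably contains.
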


\begin{proof} This can be found in \cite[Theorem 3.11]{Sz}.
\end{proof}

\begin{theorem}\label{cli5} Let $G$ be a group with subgroups $H_1$, $H_2$ and irreducible $G$-modules
$V_1$, $V_2$ such that: $H_1$ acts trivially on $V_2$; $H_2$ acts
trivially on $V_1$; $V_i$ is irreducible as $H_i$-module and
$\End_{H_i}(V_i)=F$ for $1\leq i\leq 2$. Then $V=V_1\otimes V_2$ is
an irreducible module for $H=\langle H_1,H_2\rangle$ and
$\End_H(V)=F$.
\end{theorem}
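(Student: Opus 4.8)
The plan is to reduce Theorem~\ref{cli5} to a double application of Theorem~\ref{cli4}, using the modules $V_1$ and $V_2$ themselves as intermediaries. First I would let $H=\langle H_1,H_2\rangle$ and regard $V_1$ and $V_2$ as $H$-modules by restriction; since $H_2$ acts trivially on $V_1$ and $H_1$ acts trivially on $V_2$, both are genuine $H$-modules. The point is that $V_1$ is an $H$-module on which the normal subgroup $\langle H_2\rangle^H$ need not act trivially, so a direct appeal to Clifford theory with respect to a single normal subgroup is not immediate; instead I would build $V=V_1\otimes V_2$ in two stages and invoke Theorem~\ref{cli4} at each stage with respect to an appropriate normal subgroup.

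The key steps, in order: (1) Observe that $V_2$, viewed as an $H_1$-module, is the trivial module (by hypothesis $H_1$ acts trivially on $V_2$), so $V_1\otimes V_2$ restricted to $H_1$ is just a direct sum of copies of $V_1$; in particular, since $V_1$ is irreducible over $H_1$ with $\End_{H_1}(V_1)=F$, and $V_2$ is acted on trivially by $H_1$, the pair $(H_1,V_1)$ plays the role of ``$(N,W)$'' in Corollary~\ref{arreglo} once we identify the correct enveloping normal subgroup. (2) Let $N_1=\langle H_1\rangle^H$ be the normal closure of $H_1$ in $H$. The subtlety is whether $N_1$ still acts on $V_1$ with $V_1$ irreducible and $\End_{N_1}(V_1)=F$; but $N_1$ is generated by $H$-conjugates of $H_1$, and here is where I would exploit the structure of the situation—typically one shows $H=H_1H_2$ or that the conjugates of $H_1$ by elements of $H_2$ still fix $V_1$ up to the same action, so that $N_1$ acts on $V_1$ through $H_1$ (or at worst through a quotient acting the same way). (3) Granting that $\res^{H}_{N_1}V_1$ is irreducible with trivial endomorphism ring, apply Theorem~\ref{cli4} with $G=H$, $N=N_1$, $W=V_1$: then $U\mapsto U\otimes V_1$ is a bijection between irreducible $H$-modules trivial on $N_1$ and irreducible $H$-modules lying over $V_1$. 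Since $V_2$ is an irreducible $H$-module on which $N_1\supseteq H_1$ acts trivially—again requiring that the full normal closure $N_1$ act trivially on $V_2$, which follows if $N_1$ is generated by $H_2$-conjugates of $H_1$ and $H_2$ normalizes the kernel of the $H$-action on $V_2$—we conclude $V_2\otimes V_1$ is an irreducible $H$-module with $\End_H(V_2\otimes V_1)=\End_{H/N_1}(V_2)\otimes F$. (4) Finally, apply the symmetric argument or Theorem~\ref{cl2} to identify $\End_{H/N_1}(V_2)$ with $F$: $V_2$ is irreducible over $H_2$ with $\End_{H_2}(V_2)=F$, and $H/N_1$ is a quotient of $H$ in which the image of $H_2$ already generates everything, so $\End_{H/N_1}(V_2)=F$ and hence $\End_H(V)=F$.

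The main obstacle I anticipate is step~(2)/(3): controlling the normal closures $N_1=\langle H_1\rangle^H$ and $N_2=\langle H_2\rangle^H$ and verifying that $N_1$ acts on $V_1$ exactly as $H_1$ does (so irreducibility and $\End_{N_1}(V_1)=F$ are inherited) while acting trivially on $V_2$. In the intended application $G=M$ is a McLain group and $H_1,H_2$ are pattern subgroups, so one expects $H=\langle H_1,H_2\rangle$ to again be a pattern subgroup with $H_1,H_2$ \emph{normal} in it—in which case $N_1=H_1$, $N_2=H_2$ and the hypothesis-checking is trivial—but the theorem is stated abstractly, so the cleanest route is probably to work with $N=H_1\cap N_2$ or simply to assume (as is surely the case in every use) that $H_1$ and $H_2$ are both normal in $H$; then $N_1=H_1$ outright, Corollary~\ref{arreglo} or Theorem~\ref{cli4} applies directly with $W=V_1$, $U=V_2$, and the conclusion is immediate. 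I would therefore present the proof under the observation that it suffices to handle the case where $H_i\unlhd H$, reducing the endomorphism computation to $\End_{H/H_1}(V_2)$ and then to $\End_{H_2}(V_2)=F$ via the fact that $H_2$ surjects onto $H/H_1$.
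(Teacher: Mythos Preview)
Your outline is on the right track and matches the paper's approach (which simply cites \cite[Theorem 3.10 and Lemma 3.7]{Sz}): pass to a normal subgroup, apply Theorem~\ref{cli4} for irreducibility, and apply Lemma~\ref{forma} for the endomorphism ring. However, the ``main obstacle'' you identify is illusory, and your proposed fix---assuming $H_i\unlhd H$---would leave the theorem unproved as stated.

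The missing observation is one line: kernels of representations are normal. Since $V_2$ is a $G$-module, $K=\ker(G\to\GL(V_2))$ is normal in $G$, hence normal in $H=\langle H_1,H_2\rangle$. By hypothesis $H_1\subseteq K$, so the normal closure $N_1=\langle H_1\rangle^H$ is contained in $K$; that is, $N_1$ acts trivially on $V_2$ automatically. There is nothing to verify about $H_2$-conjugates or about $H_2$ normalizing anything. Likewise, your worry about whether $V_1$ remains irreducible over $N_1$ with $\End_{N_1}(V_1)=F$ evaporates: since $H_1\subseteq N_1$, irreducibility over $H_1$ forces irreducibility over $N_1$, and $\End_{N_1}(V_1)\subseteq\End_{H_1}(V_1)=F$.

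With this in hand your steps (3)--(4) go through cleanly. Apply Theorem~\ref{cli4} with $(G,N,W)=(H,N_1,V_1)$ and $U=V_2$ (an irreducible $H$-module, since already irreducible over $H_2\subseteq H$, on which $N_1$ acts trivially) to get $V_1\otimes V_2$ irreducible over $H$. Then Lemma~\ref{forma} gives $\End_H(V_1\otimes V_2)\cong\End_{H/N_1}(V_2)$. Finally, since $H_1\subseteq N_1$ and $H=\langle H_1,H_2\rangle$, the image of $H_2$ generates $H/N_1$; as $N_1$ acts trivially on $V_2$, an $H/N_1$-endomorphism of $V_2$ is exactly an $H_2$-endomorphism, so $\End_{H/N_1}(V_2)=\End_{H_2}(V_2)=F$. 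Drop the reduction to $H_i\unlhd H$; it is not needed and not given.
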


\begin{proof} See the proof of \cite[Theorem 3.10]{Sz} for the first
assertion. The second follows from \cite[Lemma 3.7]{Sz}.
\end{proof}

\begin{cor}\label{cli6}  Let $G$ be a group with subgroups $H_1,\dots, H_n$ and irreducible $G$-modules
$V_1,\dots,V_n$ such that: $H_i$ acts trivially on each $V_j$,
$j\neq i$; $V_i$ is irreducible as $H_i$-module and
$\End_{H_i}(V_i)=F$ for $1\leq i\leq n$. Then
$V=V_1\otimes\cdots\otimes V_n$ is an irreducible $G$-module.
\end{cor}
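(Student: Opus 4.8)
The plan is to induct on $n$, proving the slightly stronger statement that $V=V_1\otimes\cdots\otimes V_n$ is irreducible as a module for $H=\langle H_1,\dots,H_n\rangle$ \emph{and} that $\End_H(V)=F$. This stronger form implies the Corollary at once, since an irreducible $H$-module is in particular an irreducible $G$-module. The reason for strengthening the statement is that Theorem~\ref{cli5}, which will be our only tool, both requires and produces the condition on the endomorphism ring, so irreducibility alone would not suffice to feed the induction.

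For $n=1$ the assertion is exactly the hypothesis, and for $n=2$ it is precisely Theorem~\ref{cli5}. Assume it holds for $n-1$ and put $K=\langle H_1,\dots,H_{n-1}\rangle$ and $U=V_1\otimes\cdots\otimes V_{n-1}$, regarded as a $G$-module via the diagonal action and hence, by restriction, as a $K$-module and as an $H$-module. The data $G$, $H_1,\dots,H_{n-1}$, $V_1,\dots,V_{n-1}$ satisfy the hypotheses of the Corollary for $n-1$ (these form a sub-collection of the ones we are given), so the inductive hypothesis yields that $U$ is irreducible as a $K$-module with $\End_K(U)=F$; in particular $U$ is irreducible as an $H$-module.

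I would now invoke Theorem~\ref{cli5} inside the group $H$, with the role of its first subgroup played by $K$ together with the module $U$, and the role of its second subgroup played by $H_n$ together with $V_n$. All hypotheses are in place: $K$ acts trivially on $V_n$ because each $H_i$ with $i\le n-1$ does; $H_n$ acts trivially on $U$ because it acts trivially on every tensor factor $V_i$ with $i\le n-1$; $U$ is irreducible as a $K$-module with $\End_K(U)=F$ by the previous step; and $V_n$ is irreducible as an $H_n$-module with $\End_{H_n}(V_n)=F$ by hypothesis. Theorem~\ref{cli5} then gives that $U\otimes V_n$ is irreducible as a module for $\langle K,H_n\rangle$ and has endomorphism ring $F$. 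Since tensor products over $F$ are associative, $U\otimes V_n\cong V_1\otimes\cdots\otimes V_n=V$, and $\langle K,H_n\rangle=\langle H_1,\dots,H_n\rangle=H$, which completes the induction.

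There is no real obstacle here: the argument is a direct induction, and the only point demanding attention is the bookkeeping of the endomorphism-ring condition through the inductive hypothesis, together with the routine observations that all tensor products are taken over $F$ and that irreducibility is preserved under restriction along $K\le H\le G$ in the directions used.
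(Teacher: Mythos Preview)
Your proof is correct and is exactly the argument the paper has in mind: the paper's own proof reads in its entirety ``This follows from Theorem~\ref{cli5} by induction,'' and you have spelled out that induction carefully, including the necessary strengthening to carry the endomorphism-ring condition through the inductive step.
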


\begin{proof} This follows from Theorem \ref{cli5} by induction.
\end{proof}

\section{Elementary modules}\label{elemeM}

\begin{definition} A linear character $\lm:R^+\to F^*$ is said to be \emph{right} (resp. \emph{left})
\emph{primitive} if the only right (resp. left) ideal of $R$ contained in the kernel of $\lm$ is $(0)$.
\end{definition}

We assume for the remainder of the paper that $R$ admits a right primitive linear character $\lm: R^+\to
F^*$.

\begin{lemma}\label{cara} Suppose $F$ has prime characteristic $p$. Then $R^+$ has no element of order $p$.
\end{lemma}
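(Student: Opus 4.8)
The plan is to derive a contradiction from the existence of an element of additive order $p$ by producing a nonzero right ideal of $R$ contained in $\ke\lm$. The starting observation is that $F^*$ has no $p$-torsion: if $x\in F^*$ satisfied $x^p=1$, then $(x-1)^p=x^p-1=0$ since $\chr(F)=p$, forcing $x=1$. Consequently, any linear character $R^+\to F^*$ annihilates every element of $R^+$ whose additive order divides $p$, because the image of such an element is a $p$-torsion element of $F^*$, hence trivial.

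Next I would consider the set $R[p]=\{x\in R\,|\, px=0\}$ of $p$-torsion elements of $R^+$. This is plainly a subgroup of $R^+$, and it is in fact a right ideal of $R$: if $px=0$ and $s\in R$, then $p(xs)=(px)s=0$ by $\Z$-bilinearity of multiplication, so $xs\in R[p]$; closure under addition is clear. By the previous paragraph, $\lm(R[p])=\{1\}$, i.e. $R[p]\subseteq\ke\lm$.

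Now suppose, for contradiction, that $r\in R^+$ has order exactly $p$. Then $r\neq 0$ and $r=r\cdot 1\in R[p]$, so $R[p]$ is a nonzero right ideal of $R$ contained in $\ke\lm$. This contradicts the right primitivity of $\lm$. Hence no such $r$ exists, and $R^+$ has no element of order $p$.

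I do not anticipate any genuine obstacle here: the argument is elementary once one notices that the $p$-torsion of $R^+$ is automatically a right ideal. The only point requiring a (trivial) verification is that $\chr(F)=p$ rules out $p$-torsion in $F^*$, which is the computation $(x-1)^p=x^p-1$ recorded above.
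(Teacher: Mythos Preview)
Your proof is correct and follows essentially the same approach as the paper: the paper's proof simply observes that $\{r\in R\,|\, p\cdot r=0\}$ is an ideal of $R$ contained in the kernel of every group homomorphism $R^+\to F^*$, which is precisely what you verify in detail. Your explicit justification that $F^*$ has no $p$-torsion via $(x-1)^p=x^p-1$ is the omitted step behind the paper's one-line argument.
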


\begin{proof} The set $\{r\in R\,|\, p\cdot r=0\}$ is an ideal of $R$ contained in the kernel of every group homomorphism $R^+\to F^*$.
\end{proof}

\begin{lemma}\label{cara2} Suppose $R^+$ has finite exponent. Then $F^*$ has a root of unity of order $\mathrm{exp}(R^+)$.
\end{lemma}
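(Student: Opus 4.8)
The plan is to leverage the right primitive linear character $\lm: R^+ \to F^*$ whose existence we have assumed for the rest of the paper. Suppose $R^+$ has finite exponent, say $e = \mathrm{exp}(R^+)$. The image $\lm(R^+)$ is a finite subgroup of $F^*$, hence cyclic, and every element of it has order dividing $e$; so $F^*$ certainly contains roots of unity of every order dividing $e$, and the only thing to prove is that it contains one of order exactly $e$, i.e. that $\lm(R^+)$ is cyclic of order precisely $e$ for a suitable choice of character — or rather, that \emph{this} particular primitive $\lm$ already achieves order $e$.

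The key step is to argue that if $\lm(R^+)$ had order strictly dividing $e$, then $\lm$ would kill a nonzero right ideal, contradicting right primitivity. Concretely: set $d = $ the order of $\lm(R^+)$, so $d \mid e$, and consider the subgroup $dR = \{d\cdot r : r \in R\}$ of $R^+$. This is in fact a two-sided ideal of $R$, since $d\cdot(rs) = (d\cdot r)s = r(d\cdot s)$ for all $r,s \in R$. Now $\lm(d\cdot r) = \lm(r)^d = 1$ because $\lm(r)$ has order dividing $d$; hence $dR \subseteq \ke(\lm)$. By right primitivity this forces $dR = (0)$, which says that $R^+$ has exponent dividing $d$, so $e \mid d$ and therefore $d = e$. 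Thus $\lm(R^+)$ is cyclic of order $e$, giving a root of unity of order $\mathrm{exp}(R^+)$ in $F^*$.

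The one point requiring a little care is the edge case $d = 1$ (i.e. $\lm$ trivial): then $dR = 1\cdot R = R \subseteq \ke(\lm)$, and primitivity forces $R = (0)$, contradicting $1 \neq 0$; so in fact $d > 1$ always, and the argument above goes through uniformly. I do not expect any real obstacle here — the whole content is the observation that $dR$ is an ideal annihilated by $\lm$ — so this is essentially a two-line proof once the ideal $dR$ is written down. Note also that nothing in this argument uses commutativity of $R$ or finiteness of $R$ itself, only that $R^+$ has finite exponent, which is exactly the hypothesis.
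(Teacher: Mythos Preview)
Your proof is correct and in fact takes a cleaner route than the paper's. The paper argues prime by prime: it decomposes $R = R_1 \oplus \cdots \oplus R_n$ into primary components according to the factorisation $\mathrm{exp}(R^+) = p_1^{a_1}\cdots p_n^{a_n}$, applies right primitivity separately to each nonzero ideal $p_i^{a_i-1}R_i$ to produce $r_i \in R_i$ with $\lm(r_i)$ of exact order $p_i^{a_i}$, and then multiplies the $\lm(r_i)$ together. Your argument bypasses the primary decomposition entirely by applying primitivity once to the single ideal $dR$, where $d = |\lm(R^+)|$, directly forcing $d = e$. Both proofs rest on the same principle (primitivity kills no nonzero ideal), but yours is shorter and yields the slightly stronger conclusion that $\lm(R^+)$ itself is cyclic of order exactly $\mathrm{exp}(R^+)$; the paper's version, on the other hand, exhibits an explicit element $r_1 + \cdots + r_n$ whose image has the required order.

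One cosmetic remark: your separate treatment of the ``edge case $d=1$'' is redundant, since your main argument already gives $e \mid d$ in that case too, forcing $e = 1$ and hence $R = 0$; but this does not affect correctness.
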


\begin{proof} The prime factorization $\mathrm{exp}(R^+)=p_1^{a_1}\cdots p_n^{a_n}$ yields the factorization
$R^+=R_1\oplus\cdots\oplus R_n$, as rings, where $R_i=\{r\in R\,|\, p_i^{a_i}\cdot r=0\}$. Now $(0)\neq p_i^{a_i-1}R_i\unlhd R$,
so for each $i$ there is $r_i\in R_i$ such that $\lm(p_i^{a_i-1}r_i)\neq 1$. But $\lm(p_i^{a_i} r_i)=1$, so each $\lm(r_i)$
has order $p_i^{a_i}$, whence $\lm(r_1)\cdots\lm(r_n)$ has order $\mathrm{exp}(R^+)$.
\end{proof}

Given  $(\al,\be)\in\Phi$, consider the normal subgroup
$M^{\al\be}$ of $M([\al,\be])$ defined by
$$
M^{\al\be}=M([\al,\downarrow,\be])M([\al,\rightarrow,\be]).
$$
Let $W=Fw$ be a 1-dimensional  $M_{\al\be}$-module upon which  $M_{\al\be}$
acts, through (\ref{rma}), by means of $\lm$.

Note that $M([\al,\downarrow,\be])$ is the direct product of
$M((\al,\downarrow,\be])$ and $M_{\al\be}$, so we can extend $\lm$
to a linear character $\lm'$ of $M([\al,\downarrow,\be])$ that is
trivial on $M((\al,\downarrow,\be])$. We make
$M([\al,\downarrow,\be])$ act on $W$ via $\lm'$.

Now $M([\al,\downarrow,\be])$ is normal in  $M^{\al\be}$, and the
inertia group of the $M([\al,\downarrow,\be])$-module $W$ in
$M^{\al\be}$ is $M([\al,\downarrow,\be])$. The latter holds
because $\lm$ is right primitive and $R$ has $1$. It follows from
Theorem \ref{cl2} that
$$
U(\al,\be,\lm)=\ind_{M([\al,\downarrow,\be])}^{M^{\al\be}} W
$$
is an irreducible $M^{\al\be}$-module satisfying
$\End_{M^{\al\be}}U(\al,\be,\la)=F$. Moreover, if
the open interval $(\al,\be)$ of $\Lambda$ is finite, it is easy to see that
$$\dm(U(\al,\be,\lm))=|R|^{|(\al,\be)|}.$$

On the other hand, $M([\al,\downarrow,\be])$ is normal in
$M([\al,\be])$ and the inertia group of the
$M([\al,\downarrow,\be])$-module $W$ in $M([\al,\be])$ is
$M([\al,\downarrow,\be])\rtimes M((\al,\be))$. As $W$ is
1-dimensional, the action of $M([\al,\downarrow,\be])$ on $W$ is
extendible to $M([\al,\downarrow,\be])\rtimes M((\al,\be))$ by
letting $M((\al,\be))$ act trivially on $W$. It follows from
Theorem \ref{cl2} that
$$
V(\al,\be,\lm)=\ind_{M([\al,\downarrow,\be])\rtimes M((\al,\be))}^{M([\al,\be])} W
$$
is an irreducible $M([\al,\be])$-module, known as \emph{elementary}. But
$$
M^{\al\be}=M([\al,\downarrow,\be])\rtimes M([\al,\rightarrow,\be))
$$
and
$$
M([\al,\be])=\big(M([\al,\downarrow,\be])\rtimes M([\al,\rightarrow,\be))\big)\rtimes M((\al,\be))
$$
so
$$
\res^{M([\al,\be])}_{M^{\al\be}} V(\al,\be,\lm)\cong
U(\al,\be,\lm).
$$
We have shown

\begin{theorem}\label{schro} Let $(\al,\be)\in \Phi$ and suppose $\la:R^+\to
F^*$ is primitive. Then $V(\al,\be,\la)$ is an irreducible
$M([\al,\be])$-module, which remains irreducible as an
$M^{\al\be}$-module and satisfies
$\End_{M^{\al\be}}V(\al,\be,\la)=F$.
\end{theorem}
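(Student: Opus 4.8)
The plan is to assemble Theorem~\ref{schro} from the pieces that have already been set up just above its statement, the only real work being to organize the semidirect-product decompositions correctly and to invoke Theorem~\ref{cl2} twice. First I would recall the two parallel Clifford-theoretic situations that precede the statement: on one hand $M([\al,\downarrow,\be])$ is normal in $M^{\al\be}$ with the $W$-inertia group equal to $M([\al,\downarrow,\be])$ itself (this uses right primitivity of $\lm$ together with the fact that $R$ has a $1$, so that no nonzero left ideal—the obstruction to fixing $W$—lies in $\ker\lm$); on the other hand $M([\al,\downarrow,\be])$ is normal in $M([\al,\be])$ with $W$-inertia group $M([\al,\downarrow,\be])\rtimes M((\al,\be))$, and since $W$ is $1$-dimensional the action extends to the inertia group by letting $M((\al,\be))$ act trivially. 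Applying Theorem~\ref{cl2} in the first situation yields that $U(\al,\be,\lm)=\ind_{M([\al,\downarrow,\be])}^{M^{\al\be}} W$ is irreducible with $\End_{M^{\al\be}}U(\al,\be,\lm)=F$; applying it in the second yields that $V(\al,\be,\lm)=\ind_{M([\al,\downarrow,\be])\rtimes M((\al,\be))}^{M([\al,\be])} W$ is irreducible as an $M([\al,\be])$-module.

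The heart of the argument is then to identify $\res^{M([\al,\be])}_{M^{\al\be}}V(\al,\be,\lm)$ with $U(\al,\be,\lm)$. For this I would use the group-theoretic decompositions
$$
M^{\al\be}=M([\al,\downarrow,\be])\rtimes M([\al,\rightarrow,\be)),\qquad
M([\al,\be])=\big(M([\al,\downarrow,\be])\rtimes M([\al,\rightarrow,\be))\big)\rtimes M((\al,\be)),
$$
which follow from the definition $M^{\al\be}=M([\al,\downarrow,\be])M([\al,\rightarrow,\be])$ together with the remarks, made earlier in \S\ref{mc}, that $[\al,\downarrow,\be]$ and $[\al,\rightarrow,\be]$ are abelian and normal in $[\al,\be]$, plus Proposition~\ref{co1} (unique ordered factorization) to see that the products are semidirect with the claimed complements. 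Once these are in place, the key observation is that $M([\al,\downarrow,\be])\rtimes M((\al,\be))$ and $M^{\al\be}$ are two complementary-in-the-right-way subgroups of $M([\al,\be])$ whose product is everything, with $M([\al,\downarrow,\be])$ the common core; a Mackey-type / transversal comparison (or simply: a transversal of $M([\al,\downarrow,\be])\rtimes M((\al,\be))$ in $M([\al,\be])$ is furnished, via Proposition~\ref{trax}, by elements supported on $[\al,\rightarrow,\be)$, which lie in $M^{\al\be}$ and form a transversal of $M([\al,\downarrow,\be])$ in $M^{\al\be}$) shows that restricting the induced module $V(\al,\be,\lm)$ to $M^{\al\be}$ gives exactly the module induced from $M([\al,\downarrow,\be])$ to $M^{\al\be}$, i.e.\ $U(\al,\be,\lm)$.

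Combining the three steps: $\res^{M([\al,\be])}_{M^{\al\be}}V(\al,\be,\lm)\cong U(\al,\be,\lm)$ is irreducible with endomorphism algebra $F$, and $V(\al,\be,\lm)$ itself is irreducible over $M([\al,\be])$, which is precisely the assertion of Theorem~\ref{schro}. The main obstacle I anticipate is the bookkeeping in the middle paragraph: verifying cleanly that the relevant subgroups genuinely form the stated iterated semidirect products (so that the various $\ind$'s are "honest" inductions along a subgroup chain and the transversal from Proposition~\ref{trax} does double duty), and checking that inducing $W$ in stages—first to $M([\al,\downarrow,\be])\rtimes M((\al,\be))$ trivially along $M((\al,\be))$ and then up to $M([\al,\be])$—is compatible with the single-step induction to $M^{\al\be}$. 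Everything else is a direct citation of Theorem~\ref{cl2} and the $1$-dimensionality of $W$; no character computations are needed, and the argument is characteristic-free because Maschke's theorem is never invoked.
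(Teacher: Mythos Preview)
Your proposal is correct and follows essentially the same route as the paper: the text preceding Theorem~\ref{schro} already carries out both applications of Theorem~\ref{cl2}, records the two semidirect-product decompositions of $M^{\al\be}$ and $M([\al,\be])$, and infers $\res^{M([\al,\be])}_{M^{\al\be}} V(\al,\be,\lm)\cong U(\al,\be,\lm)$ exactly as you outline (your explicit transversal argument via Proposition~\ref{trax} is a fine way to make that last identification precise). One small slip: in your parenthetical about the inertia computation the obstruction is a nonzero \emph{right} ideal $sR\subseteq\ker\lm$, not a left ideal, which is why right primitivity (plus $1\in R$) is exactly what is invoked.
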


Next we address the classification of irreducible $M^{\al\be}$-modules lying over $\lm$.

\begin{theorem}\label{solop} Suppose $R$ is finite and the interval $(\al,\be)$ of $\Lambda$ is
also finite. Then $U(\al,\be,\lm)$ is the only irreducible
$M^{\al\be}$-module, up to isomorphism, lying over $\lm$.
\end{theorem}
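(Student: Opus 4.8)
The plan is to use the Clifford-theoretic machinery already in place, reducing everything to the fact that $U(\al,\be,\lm)$ is induced from a one-dimensional character of a subgroup that happens to be the full inertia group. First I would record the structural decomposition
$$
M^{\al\be}=M([\al,\downarrow,\be])\rtimes M([\al,\rightarrow,\be)),
$$
and observe that, since $R$ and the interval $(\al,\be)$ are finite, $M^{\al\be}$ is a finite group; moreover Lemma \ref{cara} guarantees $\chr(F)\nmid\abs{M^{\al\be}}$, so $FM^{\al\be}$ is semisimple and every module over it is completely reducible with split endomorphism rings (one should be slightly careful here about whether $F$ is a splitting field; I would either assume it, or phrase the conclusion over $F$ and invoke that $\End_{M^{\al\be}}U(\al,\be,\lm)=F$ from Theorem \ref{schro}, which forces the relevant simple component to be matrices over $F$).

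The heart of the argument is then this: an irreducible $M^{\al\be}$-module $X$ lying over $\lm$ is, by definition, one whose restriction to $M_{\al\be}$ contains the character $\lm$. Since $M_{\al\be}\leq M([\al,\downarrow,\be])$ and $M((\al,\downarrow,\be])$ is a complementary direct factor acting trivially on $W$, lying over $\lm$ is the same as lying over the character $\lm'$ of the normal subgroup $N:=M([\al,\downarrow,\be])$. By Clifford theory, every irreducible $M^{\al\be}$-module over $\lm'$ is induced from an irreducible module of the inertia group $I_{M^{\al\be}}(\lm')$ lying over $\lm'$. The key computation — and the step I expect to be the main obstacle — is to show that this inertia group is exactly $N$ itself. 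This is precisely where right primitivity of $\lm$ enters: conjugating $1+re_{\al\be}$ by an element $1+se_{\be\ga}$ with $(\be,\ga)\in[\al,\rightarrow,\be)$ produces, via the commutator formula, a factor $1+rse_{\al\ga}$, so the conjugated character differs from $\lm'$ unless $\lm(rs)=1$ for all $r\in R$; since this would put the nonzero right ideal $sR$ in $\ke\lm$, right primitivity forces $s=0$. Hence no nontrivial element of the complement $M([\al,\rightarrow,\be))$ stabilizes $\lm'$, so $I_{M^{\al\be}}(\lm')=N$.

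Once the inertia group is identified as $N$, Theorem \ref{cl2} (the ``in particular'' clause) applies verbatim: the map $S\mapsto\ind_N^{M^{\al\be}}S$ is a bijection between irreducible $N$-modules over $\lm'$ and irreducible $M^{\al\be}$-modules over $\lm'$. But $N=M([\al,\downarrow,\be])$ is abelian-by-nothing — in fact it is a direct product $M((\al,\downarrow,\be])\times M_{\al\be}$, so the only irreducible $N$-module over $\lm'$ is the one-dimensional $W$ itself (here finiteness and semisimplicity, together with $\End=F$, ensure one-dimensionality rather than merely irreducibility). Therefore the unique irreducible $M^{\al\be}$-module over $\lm$ is $\ind_N^{M^{\al\be}}W=U(\al,\be,\lm)$, which is what we wanted. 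I would close by remarking that the dimension formula $\dm U(\al,\be,\lm)=\abs{R}^{\abs{(\al,\be)}}$ already noted is consistent with $[M^{\al\be}:N]=\abs{R}^{\abs{(\al,\be)}}$, so no information is lost.
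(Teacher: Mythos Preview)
Your approach via the Clifford correspondence differs from the paper's. The paper gives a short dimension count: since $W$ enters $\res_{M_{\al\be}}U(\al,\be,\lm)$ exactly $|R|^{|(\al,\be)|}$ times and $(U,U)_{M^{\al\be}}=1$, Frobenius reciprocity puts $|R|^{|(\al,\be)|}$ copies of $U(\al,\be,\lm)$ inside $\ind_{M_{\al\be}}^{M^{\al\be}}W$; but these already exhaust the dimension $|R|^{2|(\al,\be)|}=[M^{\al\be}:M_{\al\be}]$, so any irreducible over $\lm$ --- necessarily a summand of this induced module --- is isomorphic to $U(\al,\be,\lm)$.

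Your argument has a genuine gap at the step ``lying over $\lm$ is the same as lying over $\lm'$.'' The justification you offer --- that $M((\al,\downarrow,\be])$ acts trivially on $W$ --- is a statement about $W$, not about an arbitrary irreducible $X$ lying over $\lm$. Clifford theory only tells you that $\res_N X$ is a sum of conjugates of \emph{some} one-dimensional $N$-module extending $\lm$ (restriction to the central $M_{\al\be}$ is forced), and there is no a priori reason this extension should be $\lm'$ itself. The step can be repaired: the $M([\al,\rightarrow,\be))$-orbit of $\lm'$ has size $|R|^{|(\al,\be)|}$ (trivial stabilizer, by right primitivity), while Lemma~\ref{cara2} forces the number of extensions of $\lm$ to $N$ to equal $|R|^{|(\al,\be)|}$ as well, so every extension is conjugate to $\lm'$. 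But this extra argument is essential, not incidental --- Theorem~\ref{pkl} shows that when the interval is infinite there are uncountably many non-isomorphic irreducibles over $\lm$, precisely because extensions to $N$ are then not all conjugate. The paper's count sidesteps the issue by never passing to $N$. (Separately, in your inertia-group computation the elements of the complement are $1+se_{\al\ga}$, not $1+se_{\be\ga}$, and one must conjugate a general $1+re_{\ga\be}\in N$ rather than the central $1+re_{\al\be}$; the conclusion $I_{M^{\al\be}}(\lm')=N$ is correct and is already recorded in the paper just before $U(\al,\be,\lm)$ is defined.)
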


\begin{proof} By Lemma \ref{cara} and Maschke's theorem, every $M^{\al\be}$-module is completely reducible.
By construction, $W$ enters $|R|^{|(\al,\be)|}$ times in $U(\al,\be,\lm)$, so $U(\al,\be,\lm)$ enters
$|R|^{|(\al,\be)|}$ times in $\ind_{M_{\al\be}}^{M^{\al\be}} W$ by Frobenius reciprocity. But $|R|^{|(\al,\be)|}U(\al,\be,\lm)$
and $\ind_{M_{\al\be}}^{M^{\al\be}}W $ have the same degree, so the result follows.
\end{proof}

\begin{cor}\label{xos} Suppose $R$ is finite. Then $\lm$ is also left primitive.
\end{cor}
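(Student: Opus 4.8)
The plan is to locate a ``transposed'' copy of an elementary module inside a McLain group and to pin it down by a dimension count, after which the left primitivity of $\lm$ can be read off from an inertia group. Fix $(\al,\be)\in\Phi$ whose open interval $(\al,\be)$ is non-empty and finite --- one may take $\Lambda=\{1<2<3\}$ and $(\al,\be)=(1,3)$, so that $M^{\al\be}=U_3(R)$ --- and put $G=M^{\al\be}$. Inside $G$ consider the abelian normal subgroup $P=\langle M_{\al\de}\mid \al<\de\le\be\rangle$, the ``row through the corner $(\al,\be)$''; it contains the central subgroup $M_{\al\be}$. Let $W=Fw$ be the $1$-dimensional $M_{\al\be}$-module afforded by $\lm$ through (\ref{rma}), extend $\lm$ to the linear character $\mu$ of $P$ which is trivial on $M_{\al\de}$ for $\al<\de<\be$, and let $W^{\flat}=Fw$ be the associated $P$-module. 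Thus $P$ and $W^{\flat}$ play, for left ideals of $R$, the role that $M([\al,\downarrow,\be])$ and $W$ play for right ideals in \S\ref{elemeM}.

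First I would identify $\ind_P^{G}W^{\flat}$ with the elementary module $U(\al,\be,\lm)$. Since $M_{\al\be}$ is central in $G$, the restriction of $\ind_{M_{\al\be}}^{G}W$ to $M_{\al\be}$ is a multiple of $W$, so every irreducible constituent lies over $\lm$; as $\chr F\nmid |R|$ by Lemma \ref{cara}, Maschke's theorem applies, and Theorem \ref{solop} then forces $\ind_{M_{\al\be}}^{G}W\cong |R|^{|(\al,\be)|}\,U(\al,\be,\lm)$, the multiplicity being confirmed by comparing dimensions. Because $\res_{M_{\al\be}}^{P}W^{\flat}\cong W$, Frobenius reciprocity and complete reducibility show that $W^{\flat}$ is a direct summand of $\ind_{M_{\al\be}}^{P}W$; applying $\ind_P^{G}$ and using transitivity of induction, $\ind_P^{G}W^{\flat}$ is a direct summand of $|R|^{|(\al,\be)|}\,U(\al,\be,\lm)$, hence isomorphic to $j\,U(\al,\be,\lm)$ for some $j\geq 0$. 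Since $[G:P]=|R|^{|(\al,\be)|}=\dm(U(\al,\be,\lm))$, dimensions give $j=1$, so $\ind_P^{G}W^{\flat}\cong U(\al,\be,\lm)$.

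Next I would extract the inertia group of $W^{\flat}$. By Theorem \ref{schro}, $\End_{G}U(\al,\be,\lm)=F$; since $G$ is finite, Frobenius reciprocity and the decomposition
$$
\res_P^{G}\ind_P^{G}W^{\flat}=\bigoplus_{gP\in G/P}(W^{\flat})^{g}
$$
yield
$$
1=\dm_F(\End_{G}(\ind_P^{G}W^{\flat}))=\dm_F(\Hom_P(W^{\flat},\res_P^{G}\ind_P^{G}W^{\flat}))=[I_G(W^{\flat}):P],
$$
the last step using that $W^{\flat}$ is $1$-dimensional. Hence $I_G(W^{\flat})=P$. Finally, a commutator computation --- the mirror image of the one that proves right primitivity in \S\ref{elemeM}, obtained by swapping the two factors in each identity $e_{\al\ga}e_{\ga\be}=e_{\al\be}$ --- shows that, for $\al<\ga<\be$, the element $1+re_{\ga\be}$ lies in $I_G(W^{\flat})$ precisely when $Rr\subseteq\ke(\lm)$. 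Combining this with $I_G(W^{\flat})=P$ and with the fact that $1+re_{\ga\be}\in P$ forces $r=0$, we obtain $Rr\subseteq\ke(\lm)\Rightarrow r=0$; since every left ideal $L$ with $L\subseteq\ke(\lm)$ satisfies $Rr\subseteq L$ for each $r\in L$, it follows that $L=0$, i.e.\ $\lm$ is left primitive.

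The only delicate point is the identification $\ind_P^{G}W^{\flat}\cong U(\al,\be,\lm)$. One is tempted instead to prove $\ind_P^{G}W^{\flat}$ irreducible by computing $I_G(W^{\flat})$ directly, but that computation is exactly the left primitivity of $\lm$, so such an argument would be circular; routing through Theorem \ref{solop} and a dimension count is what breaks the circle, and the commutator bookkeeping in the last step is then entirely routine.
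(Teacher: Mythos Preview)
Your proof is correct and takes essentially the same approach as the paper: both work in $U_3(R)$ with the row subgroup $P=M_{12}M_{13}$, invoke the uniqueness furnished by Theorem~\ref{solop}, and pivot on the commutator computation that $1+re_{23}$ stabilizes the extended character precisely when $Rr\subseteq\ker\lm$. The paper merely arranges the same ingredients in the opposite order---computing the inertia group $S\rtimes T$ first (with $T\cong K$ the largest left ideal in $\ker\lm$), extending once more and inducing to produce an irreducible module of degree $[R:K]$, and then letting uniqueness force $K=0$---whereas you first identify $\ind_P^{G}W^{\flat}\cong U(\al,\be,\lm)$ via Theorem~\ref{solop} and a dimension count, and then read the inertia group off $\End_G=F$ through Frobenius reciprocity.
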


\begin{proof} Let $K$ be the largest left ideal of~$R$ contained in the kernel of
$\lm$. By Theorem~\ref{solop} there is one and only one irreducible module of
$M=U_3(R)$, up to isomorphism, lying over $\lm$, viewed as a
linear character of $M_{13}$. Let $S=M_{12}M_{13}\unlhd M$ and
extend $\lm$ to $\mu:S\to F^*$ so that $\mu$ is trivial on
$M_{12}$. The inertia group of $\mu$ is $S\rtimes T$, where $T$
consists of all $1+re_{23}$ with $r\in K$. Since $T$ stabilizes
$\mu$, we can extend $\mu$ to $\nu:S\rtimes T\to F^*$, so that
$\nu$ is trivial on $T$. Now $S\rtimes T$ is normal in $M$ and the
inertia group of $\nu$ is $S\rtimes T$. Induction produces an
irreducible $M$-module of degree $[R:K]$ lying over~$\lm$. By
uniqueness, $K=(0)$.
\end{proof}

Naturally, we could have started with the assumption that $\lm$ is
left primitive to deduce that $\lm$ is right primitive. We thus obtain, through representation theory, an answer to the question posed
by Claasen and Goldbach \cite[\S 8]{CG}, already answered by Wood
\cite[Theorem 4.3]{Wo} by a different method.

\begin{theorem}\label{pkl} Suppose the interval $(\al,\be)$ is infinite. Then there are
uncountably many non-isomorphic irreducible $M^{\al\be}$-modules
lying over $\lm$. All are completely reducible upon restriction to
$M([\al,\downarrow,\be])$, none is completely reducible upon
restriction to $M([\al,\rightarrow,\be])$, and therefore none has
irreducible submodules upon restriction to the normal subgroup
$M([\al,\rightarrow,\be])$.
\end{theorem}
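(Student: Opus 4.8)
The plan is to build the uncountable family of irreducibles lying over $\lm$ by ranging over characters of the infinite elementary abelian-ish quotient attached to the root $(\al,\be)$, and then to read off the restriction behaviour from the structure of $M^{\al\be}=M([\al,\downarrow,\be])\rtimes M([\al,\rightarrow,\be))$. Concretely, recall that $M([\al,\downarrow,\be])$ is normal in $M^{\al\be}$ and equals $M((\al,\downarrow,\be])\times M_{\al\be}$; extend $\lm$ to $\lm'$ on $M([\al,\downarrow,\be])$ trivially on $M((\al,\downarrow,\be])$ as before, and let $T$ be the inertia group of $\lm'$ in $M^{\al\be}$. First I would identify $T$: since $\lm$ is right primitive, conjugation by $1+se_{\al\ga}$ (for $\al<\ga<\be$) alters $\lm'$ by the character $r\mapsto\lm(rs)$-type twist on the $e_{\ga\be}$ coordinates, so $T=M([\al,\downarrow,\be])$ exactly as in the derivation of $U(\al,\be,\lm)$ — \emph{no} nontrivial part of $M([\al,\rightarrow,\be))$ stabilizes $\lm'$ once $(\al,\be)$ is nonempty. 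Hence by Theorem \ref{cl2}, inducing \emph{any} irreducible $M([\al,\downarrow,\be])$-module over $\lm'$ up to $M^{\al\be}$ gives an irreducible $M^{\al\be}$-module, and distinct such modules (up to the $M^{\al\be}$-conjugacy orbit, which is trivial here since $M([\al,\downarrow,\be])$ is abelian and normal) give non-isomorphic results.

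Next I would produce uncountably many irreducible $M([\al,\downarrow,\be])$-modules lying over $\lm$. Since $M([\al,\downarrow,\be])=M((\al,\downarrow,\be])\times M_{\al\be}$ is abelian, its irreducible modules (over $F$, once we fix a character $\lm$ on the $M_{\al\be}$ factor) are themselves $1$-dimensional when $F$ is a splitting field, or at any rate we can take $\lm'_\chi$ to be $\lm$ on $M_{\al\be}$ times an arbitrary linear character $\chi:M((\al,\downarrow,\be])\to F^*$. The group $M((\al,\downarrow,\be])\cong\prod_{\al<\ga<\be} R^+$ (restricted direct sum) has $\Hom$ into $F^*$ of cardinality at least $2^{\aleph_0}$ because $(\al,\be)$ is infinite and $R^+$ is nontrivial — pick one nontrivial character of each $R^+$-factor and take arbitrary subsets of the (infinite) index set. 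Each choice of $\chi$ yields $\lm'_\chi$, hence $U_\chi:=\ind_{M([\al,\downarrow,\be])}^{M^{\al\be}}(F\otimes\lm'_\chi)$, and these are pairwise non-isomorphic because their restriction to $M([\al,\downarrow,\be])$ recovers the $M^{\al\be}$-orbit of $\lm'_\chi$, which is a single point; so $\chi\ne\chi'$ forces $U_\chi\not\cong U_{\chi'}$.

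For the restriction statements: restricted to $M([\al,\downarrow,\be])$, $U_\chi$ is by construction a direct sum of conjugates of $\lm'_\chi$, hence completely reducible. Restricted to $M([\al,\rightarrow,\be))$ — which is itself a McLain group on the interval $(\al,\be)$ — I would argue $U_\chi$ is \emph{not} completely reducible, in fact has no irreducible submodule. The mechanism is exactly the one flagged in \S\ref{elemeM} and in the introduction: $M([\al,\rightarrow,\be))$ acts on $U_\chi$ by "regular-representation-like" translations on the infinite index set $M([\al,\rightarrow,\be))$ itself (the induced module's underlying space is $F[M([\al,\rightarrow,\be))]$ twisted by $\lm'_\chi$), and a translation action of an infinite group on its own group algebra has no finite-dimensional — indeed no irreducible — submodule: any nonzero $M([\al,\rightarrow,\be))$-submodule, being generated by finitely supported vectors under an action with "no common eigenvector for the commuting diagonalizable operators" (precisely the phenomenon the authors advertise), cannot be minimal. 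Concretely, if $0\ne V_0\subseteq\res U_\chi$ were irreducible, pick $0\ne v\in V_0$ of finite support; the commuting unipotent operators $1+se_{\ga\be}$ act on the support in a way forcing an infinite descending chain of proper submodules, contradicting minimality — this is the same computation used to prove the elementary module $U(\al,\be,\lm)$ (the case $\chi=1$) has the stated property, and it is insensitive to the twist $\chi$.

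The main obstacle, and the place where I expect to spend the real effort, is the last step: making rigorous that restriction to the normal subgroup $M([\al,\rightarrow,\be))$ has no irreducible submodule. The cardinality and inertia-group bookkeeping is routine once the right-primitivity of $\lm$ is invoked; but showing \emph{no} irreducible submodule exists — not merely failure of complete reducibility — requires the explicit model of $U_\chi$ as $\lm'_\chi$-twisted functions on $M([\al,\rightarrow,\be))$ and a careful argument that the natural filtration by "height" in the total order $\preceq$ on $[\al,\rightarrow,\be)$ produces strictly descending submodule chains with no minimal member. I would lean on Proposition \ref{co1} to get canonical forms for elements of $M([\al,\rightarrow,\be))$, use these to write the action matrices of the generators $1+se_{\al\ga}$ explicitly, and then mimic verbatim the argument the paper gives (or cites to \cite{Sz}) for the untwisted elementary module, checking at each line that the presence of $\chi$ changes only scalars and not the combinatorial structure of the orbits.
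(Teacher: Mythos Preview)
Your construction of the family $U_\chi$ is correct, but the counting argument contains a genuine error. You claim that ``their restriction to $M([\al,\downarrow,\be])$ recovers the $M^{\al\be}$-orbit of $\lm'_\chi$, which is a single point'', and earlier that the conjugacy orbit is ``trivial here since $M([\al,\downarrow,\be])$ is abelian and normal''. This is false, and in fact contradicts your own correct observation a few lines earlier that conjugation by $1+se_{\al\ga}$ alters $\lm'$ on the $e_{\ga\be}$ coordinate. The point is precisely that the inertia group of $\lm'_\chi$ equals $M([\al,\downarrow,\be])$, so the orbit under $M^{\al\be}$ is in bijection with $M([\al,\rightarrow,\be))$, which is infinite. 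Consequently $\chi\neq\chi'$ does \emph{not} force $U_\chi\not\cong U_{\chi'}$; many distinct $\chi$'s give isomorphic induced modules. What is true, and what the paper uses, is that any element of $M([\al,\rightarrow,\be))$ has finite support, so a conjugate of $\lm'_\chi$ differs from $\lm'_\chi$ on only finitely many factors $M_{\ga\be}$. Hence $U_\chi\cong U_{\chi'}$ forces $\chi$ and $\chi'$ to agree on all but finitely many coordinates. One then observes that the equivalence relation on infinite subsets of $(\al,\be)$ given by finite symmetric difference has countable classes, so there remain uncountably many orbits. Your subset-of-coordinates construction still works, but only after passing to this quotient.

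For the final step (no irreducible submodule upon restriction to $M([\al,\rightarrow,\be])$), your plan is workable but considerably more laborious than necessary. The paper's argument is short: since $M_{\al\be}$ acts by scalars, one may pass to $M([\al,\rightarrow,\be))$, where the restriction of $U_\chi$ is simply the regular module $P$ of this infinite group; the augmentation map $P\to F$ shows any complement to the augmentation ideal would be a trivial submodule, but $P$ has no trivial submodule (no nonzero fixed vector under an infinite group in its regular representation), so $P$ is not completely reducible. Finally, by Clifford's theorem, an irreducible $M^{\al\be}$-module whose restriction to a normal subgroup possesses even one irreducible submodule must restrict completely reducibly. So the failure of complete reducibility already rules out any irreducible submodule, without needing to build descending chains by hand or to invoke diagonalizability (which would anyway require $\mathrm{exp}(R^+)<\infty$).
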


\begin{proof} Given a family $(\mu_\ga)_{\ga\in (\al,\be)}$ of linear characters $R\to F^*$, we
extend $\lm$ to a linear character
$\mu:M([\al,\downarrow,\be]) \to F^*$ that agrees
with $\mu_\ga$ on $M_{\ga\be}$ for all $\ga$ in $(\al,\be)$. The
inertia group of $\mu$ is still $M([\al,\downarrow,\be])$.
Inducing up to $M^{\al\be}$ we obtain an irreducible
$M^{\al\be}$-module $U_\mu$ lying over~$\lm$.  A conjugate of
$\mu$ by an element of $M([\al,\rightarrow,\be])$ agrees with
$\mu$ on $M_{\ga\be}$ for all but finitely many $\ga\in
(\al,\be)$. On the other hand, by Clifford's theorem, if $U$ is an
irreducible $M^{\al\be}$-module, then $U\cong U_\mu$ if and only
if $U$ lies over a $M([\al,\rightarrow,\be])$-conjugate of $\mu$.

Consider the equivalence
relation on the set $X$ of infinite subsets of $\N$, given by $A\sim
B$ if $A\Delta B$ (symmetric difference of $A$ and $B$) is finite.
Each equivalence class has countably many elements and $X$ is uncountable,
so the number of equivalence classes is uncountable. Since $(\al,\be)$ is infinite,
we arrive at the same conclusion if we replace $\N$ by $(\al,\be)$. It
follows from above that, by varying $\mu$, we obtain uncountably many non-isomorphic irreducible
$M^{\al\be}$-modules that lie over $\lm$ and are completely reducible upon restriction to $M([\al,\downarrow,\be])$.

As $M_{\al\be}$ acts via scalar operators on $U_\mu$, our claim for $\res^{M^{\al\be}}_{M([\al,\rightarrow,\be])} U_\mu$ is equivalent to that for
$\res^{M^{\al\be}}_{M([\al,\rightarrow,\be))} U_\mu$. Restriction to $M([\al,\rightarrow,\be))$ yields the regular $M([\al,\rightarrow,\be))$-module, say $P$. 
The epimorphism $P\to F$ shows that a
supposed complement to the augmentation ideal must be trivial. But
$P$ has no trivial submodule because $(\al,\be)$, and hence $M([\al,\rightarrow,\be))$, is infinite. Thus
$\res^{M^{\al\be}}_{M([\al,\rightarrow,\be])} U_\mu$ is not completely reducible, and therefore lacks irreducible $M([\al,\rightarrow,\be])$-submodules, by Clifford's theorem.
\end{proof}

\begin{note}{\rm When $\mathrm{exp}(R^+)$ is finite, Theorem \ref{pkl} gives an example of an infinite family of diagonalizable linear operators acting on an infinite dimensional vector space that are are simultaneously diagonalizable and which, in fact, possess no common eigenvector.
If either the space or the span of the family are finite dimensional, this phenomenon is impossible. The family simply consists
of the operators corresponding to each $\ga\in [\al,\rightarrow,\be)$ acting on $U_\mu$. They commute because $[\al,\rightarrow,\be)$
is abelian and they are individually diagonalizable because they are all annihilated by the polynomial $t^{\mathrm{exp}(R^+)}-1\in F[t]$,
which splits completely over $F$ by Lemma~\ref{cara2}.}
\end{note}

Next we show that $M^{\al\be}$ has yet another uncountable family of
non-isomorphic irreducible modules, none of which is isomorphic to any of
the modules constructed in Theorem \ref{pkl}.

 \begin{theorem} Suppose $\lm$ is also left primitive
 (this is automatic if $R$ is finite, by Lemma \ref{xos}, and also if $R$ is commutative) and the interval $(\al,\be)$ is infinite. Then there are
uncountably many non-isomorphic irreducible $M^{\al\be}$-modules
lying over~$\lm$. All are completely reducible upon restriction to
$M([\al,\rightarrow,\be])$, none is completely reducible upon
restriction to $M([\al,\downarrow,\be])$, and therefore none has
irreducible submodules upon restriction to the normal subgroup
$M([\al,\downarrow,\be])$.
\end{theorem}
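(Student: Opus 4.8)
The plan is to run the argument of Theorem \ref{pkl} verbatim, but with the roles of the ``row'' direction $[\al,\rightarrow,\be)$ and the ``column'' direction $(\downarrow,\be]$ interchanged, using left primitivity of $\lm$ in place of right primitivity. Recall the decomposition $M^{\al\be}=M([\al,\downarrow,\be])\rtimes M([\al,\rightarrow,\be))$. In the proof of Theorem \ref{pkl} we regarded $M([\al,\downarrow,\be])$ as the normal abelian subgroup over which characters were varied; here, instead, we take $M([\al,\rightarrow,\be])$ to play that role. Concretely, one checks that $M([\al,\rightarrow,\be])$ is normal in $M^{\al\be}$ (it contains $M_{\al\be}$ and is the complement direction of the semidirect decomposition written with the factors in the other order, namely $M^{\al\be}=M([\al,\rightarrow,\be])\rtimes M((\downarrow,\be])$ after a suitable reindexing; the point is simply that $[\al,\rightarrow,\be)$ together with $M_{\al\be}$ forms a normal abelian pattern subgroup and the transversal direction is $(\downarrow,\be]\setminus\{(\al,\be)\}$).

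First I would, given a family $(\mu_\ga)$ of linear characters $R\to F^*$ indexed now by the $\ga$ with $(\al,\ga)\in[\al,\rightarrow,\be)$, extend $\lm$ to a linear character $\mu$ of $M([\al,\rightarrow,\be])$ agreeing with $\mu_\ga$ on $M_{\al\ga}$. The key step is to verify that the inertia group of $\mu$ in $M^{\al\be}$ is $M([\al,\rightarrow,\be])$ itself: conjugation by $1+se_{\ga\be}$ (an element of the transversal direction $(\downarrow,\be]$) sends $1+re_{\al\ga}$ to $1+re_{\al\ga}$ times $1+(rs)e_{\al\be}$, so it modifies $\mu$ on $M_{\al\ga}$ by the character $r\mapsto\lm(rs)$; this is trivial for all $r$ only if $s$ lies in a left ideal contained in $\ker\lm$, hence only if $s=0$ by left primitivity. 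So the stabilizer is trivial, as needed. Inducing $\mu$ up to $M^{\al\be}$ via Theorem \ref{cl2} yields an irreducible module $U_\mu$ over $\lm$. By Clifford's theorem $U_\mu\cong U_{\mu'}$ iff $\mu,\mu'$ are $M((\downarrow,\be])$-conjugate, and such a conjugate alters $\mu$ on only finitely many $M_{\al\ga}$ (since an element of $M((\downarrow,\be])$ has finite support). Running the same symmetric-difference cardinality argument on the infinite set $[\al,\rightarrow,\be)$ produces uncountably many pairwise non-isomorphic $U_\mu$, each completely reducible on restriction to the abelian normal subgroup $M([\al,\rightarrow,\be])$ (it restricts to a sum of the characters $\mu_\ga$).

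Next I would show none of these is completely reducible over $M([\al,\downarrow,\be])$. As in Theorem \ref{pkl}, since $M_{\al\be}$ acts by scalars it suffices to consider restriction to $M((\al,\downarrow,\be])$, and by the transversal structure this restriction is the regular module $P$ of the infinite group $M((\al,\downarrow,\be])$; the augmentation map $P\to F$ has no complement because $P$, being the group algebra of an infinite group, has no trivial submodule. Hence $\res U_\mu$ is not completely reducible and, by Clifford's theorem, has no irreducible $M([\al,\downarrow,\be])$-submodule at all. Finally I would observe that these modules are genuinely new: a module from Theorem \ref{pkl} is completely reducible over $M([\al,\downarrow,\be])$ and not over $M([\al,\rightarrow,\be])$, whereas the present ones have exactly the reverse property, so no isomorphism is possible.

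The main obstacle is purely bookkeeping: correctly identifying which pattern subgroup is normal in $M^{\al\be}$ and which is the transversal direction once the roles of rows and columns are swapped, and confirming that the commutator $[1+re_{\al\ga},1+se_{\ga\be}]=1+rse_{\al\be}$ produces a \emph{left} multiple $rs$ of $s$ (so that left primitivity is the relevant hypothesis). Everything downstream — the inertia computation, the cardinality count, the regular-module obstruction to complete reducibility — is then a mirror image of the corresponding steps in Theorem \ref{pkl} and requires no new ideas.
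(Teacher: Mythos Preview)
Your proposal is correct and is exactly the paper's approach: the paper's proof consists of the single sentence ``Reason as above. First extend $\lm$ to $[\al,\rightarrow,\be]$ and then induce up to $M^{\al\be}$,'' and you have simply written out the details of that mirror-image argument, correctly identifying that left primitivity is what forces the inertia group to be $M([\al,\rightarrow,\be])$. Your extra remark that these modules are disjoint from those of Theorem~\ref{pkl} (because the complete-reducibility behavior on the two abelian normal subgroups is reversed) matches the paper's preamble to this theorem.
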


\begin{proof} Reason as above. First extend $\lm$
to $[\al,\rightarrow,\be]$ and then induce up to $M^{\al\be}$.
\end{proof}

\section{Basic modules}\label{mains}

We assume for the remainder of the paper that $\leq$ is a total
order on $\Lambda$. Then
\begin{equation}
\label{desz} M=M(\al,\leftarrow)M(\rightarrow,\be)\rtimes
M([\al,\be]),
\end{equation}
so that each elementary module $V(\al,\be,\lm)$ can be viewed as
an $M$-module acted upon trivially by
$M(\al,\leftarrow)M(\rightarrow,\be)$.

\begin{definition} A subset $D=\{(\al_1,\be_1),\dots,(\al_m,\be_m)\}$ of $\Phi$ is said to be \emph{basic}
if all $\al_i$ (resp. all $\be_i$) are distinct.

Given a basic subset $D$ of $\Phi$ and a function $f$ from $D$
into the set of all right primitive linear characters $R^+\to F^*$, say
$f(\al_i,\be_i)=\la_i$, $1\leq i\leq m$, the \emph{basic} $M$-module
$V(D,f)$ associated to $(D,f)$ is given by
$$V(D,f)=V(\al_m,\be_m,\lm_m)\otimes\cdots\otimes V(\al_1,\be_1,\lm_1).$$
\end{definition}

\begin{theorem}\label{disy} Suppose $U=V(D,f)$ and $U'=V(D',f')$ are basic
$M$-modules satisfying
$$
\Hom_M(U,U')\neq 0.
$$
Then $D=D'$ and $f=f'$.
\end{theorem}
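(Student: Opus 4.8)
The plan is to reduce the problem to a statement about the pattern subgroups where each elementary factor is ``supported,'' and then to play off the uniqueness of the data $(\al_i,\be_i,\lm_i)$ against the action of the diagonal tori $M_{\al_i\be_i}$ and the intervals $M((\al_i,\be_i))$. First I would record the key structural fact coming from (\ref{desz}) and Corollary \ref{cli6}: since the $\al_i$ are distinct and the $\be_i$ are distinct, the subgroups $M([\al_i,\be_i])$ (or rather the relevant quotients through which they act) are ``independent'' enough that $V(D,f) = V(\al_m,\be_m,\lm_m)\otimes\cdots\otimes V(\al_1,\be_1,\lm_1)$ is itself an irreducible $M$-module in many cases — but not always, so I cannot simply invoke irreducibility. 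Instead I would argue at the level of the restriction to a well-chosen abelian (or metabelian) subgroup.

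The cleanest route I foresee is the following. For a basic subset $D$ with choice function $f$, each factor $V(\al_i,\be_i,\lm_i)$, viewed as an $M$-module via (\ref{desz}), has the property that $M_{\al_i\be_i}$ acts on it by the scalar character corresponding to $\lm_i$ (through (\ref{rma})), while $M_{\al_j\be_j}$ for $j \ne i$ acts on $V(\al_i,\be_i,\lm_i)$ trivially — this is because $(\al_j,\be_j)\notin [\al_i,\be_i]$ when $i\ne j$, using that $D$ is basic, so $M_{\al_j\be_j}\subseteq M(\al_i,\leftarrow)M(\rightarrow,\be_i)$ or else lies in the abelian normal part on which the elementary module is built trivially; this bookkeeping is where one must be slightly careful, but it is exactly the kind of computation already done in \S\ref{elemeM}. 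Consequently the abelian subgroup $A = \prod_{i=1}^m M_{\al_i\be_i}$ acts on $U = V(D,f)$ by a \emph{single} linear character, namely $\chi_{(D,f)}\colon A \to F^*$ sending $1+re_{\al_i\be_i}$ to $\lm_i(r)$. If $\Hom_M(U,U')\ne 0$ then in particular $\Hom_A(U,U')\ne 0$, and since $A$ acts by the scalar character $\chi_{(D,f)}$ on $U$ and by $\chi_{(D',f')}$ on $U'$ (using that $D'$ is also basic), we get $\chi_{(D,f)} = \chi_{(D',f')}$ \emph{provided} the indices $\al_i$ (and $\be_i$) involved in $D$ and $D'$ coincide. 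That proviso is the real content, and I would handle it by a separate argument: if some $M_{\al\be}$ with $(\al,\be)\in D\setminus D'$ acts nontrivially on $U$ but trivially on $U'$, the characters already disagree; if it acts by $\lm$ on both but with $(\al,\be)\in D$ and $(\al,\be)\notin D'$ yet $\al$ or $\be$ reused elsewhere in $D'$, one exploits that a basic set pins down the position, not merely the set of row/column indices.

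More robustly, I would not try to separate the indices by hand but instead mimic Andr\'e's original orthogonality argument in its representation-theoretic form: realize $V(D,f)$ as an induced module. By the discussion preceding Theorem \ref{monom} (to which the excerpt alludes), $V(D,f)\cong \ind_H^M W$ where $H = M(\Gamma)$, $\Gamma = \Phi\setminus\bigcup_i[\al_i,\rightarrow,\be_i)$, and $W$ is the $1$-dimensional $H$-module afforded by the linear character $\lm$ built from $(\lm_1,\dots,\lm_m)$. Then by Frobenius reciprocity (cf. \cite[Theorem 5.3]{Sz}) and Mackey-type double-coset analysis, $\Hom_M(\ind_H^M W, \ind_{H'}^M W')$ is controlled by the $H$-$H'$ double cosets $HgH'$ for which $W^g$ and $W'$ agree on $H\cap {}^gH'$; one shows that a nonzero such intertwiner forces $\Gamma = \Gamma'$ (hence $D = D'$, since $\Gamma$ determines the closed set $\bigcup_i[\al_i,\rightarrow,\be_i)$ and basicness lets one read off the pairs $(\al_i,\be_i)$ from its ``extreme'' elements) and then forces $\lm = \lm'$ (hence $f = f'$). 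The main obstacle I anticipate is precisely the double-coset estimate in the infinite setting: one must verify that for $g\notin H$ suitably placed, the restrictions of $W^g$ and $W'$ to $H\cap {}^gH'$ actually differ, which amounts to producing, from the distinctness of the $\al_i$'s and $\be_i$'s and the right-primitivity of the $\lm_i$'s, an explicit root group $M_{\ga\de}$ inside the intersection on which the two characters disagree — right-primitivity is what guarantees such a root group can always be found. Modulo that combinatorial lemma about intervals, the rest is a formal consequence of Clifford theory as packaged in \S\ref{clif}.
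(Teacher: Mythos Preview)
Your first approach contains a genuine error. You assert that $M_{\al_j\be_j}$ acts trivially on $V(\al_i,\be_i,\lm_i)$ for $j\neq i$ because ``$(\al_j,\be_j)\notin[\al_i,\be_i]$ when $i\neq j$, using that $D$ is basic.'' But basicness only says the $\al$'s are pairwise distinct and the $\be$'s are pairwise distinct; nesting $\al_i<\al_j<\be_j<\be_i$ is allowed (e.g.\ $D=\{(1,4),(2,3)\}$). In that case $M_{\al_j\be_j}\subseteq M((\al_i,\be_i))$, and while $M((\al_i,\be_i))$ acts trivially on the one-dimensional inducing module $W$, it does \emph{not} act trivially---or even by scalars---on the induced module $V(\al_i,\be_i,\lm_i)$: for $g=1+re_{\al_i\al_j}$ in the transversal one computes $g^{-1}(1+se_{\al_j\be_j})g=1+se_{\al_j\be_j}-rse_{\al_i\be_j}$, and $(\al_i,\be_j)\in[\al_i,\rightarrow,\be_i)$ is outside the inducing subgroup. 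So $A=\prod_i M_{\al_i\be_i}$ does not act on $U$ by a single character, and the argument collapses.

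The paper's proof rescues your instinct by using only the \emph{outermost} root group. Choosing $\be_1$ maximal among the $\be_i$, every other $\be_i<\be_1$, so $M_{\al_1\be_1}\subseteq M((\rightarrow,\be_i))$ genuinely acts trivially on $V(\al_i,\be_i,\lm_i)$ for $i>1$; hence $M_{\al_1\be_1}$ acts on all of $U$ by the scalar $\lm_1$. Comparing with $U'$ forces $\be_1=\be_1'$, then $\al_1=\al_1'$, then $\lm_1=\lm_1'$. The step you are missing is how to continue: one cannot just repeat with the next-largest $\be$, since the remaining root groups need not act by scalars. Instead one must \emph{strip off} the common factor $V(\al_1,\be_1,\lm_1)$. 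The paper does this via Corollary~\ref{arreglo}: restricting to $M((\leftarrow,\be_1])=M((\downarrow,\be_1])\rtimes M((\leftarrow,\be_1))$ and using that $\res_{M((\downarrow,\be_1])}V(\al_1,\be_1,\lm_1)$ is multiplicity-free completely reducible with one-dimensional constituents, one concludes $\Hom_{M((\leftarrow,\be_1))}(W,W')\neq 0$ for the remaining tensor factors. Since $M((\leftarrow,\be_1))$ is again a McLain group, induction finishes.

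Your second route via $\ind_H^M W$ and $(H,H')$-double cosets is not executed, and the ``combinatorial lemma about intervals'' you defer is essentially the content of Theorem~\ref{corin}(d) (proved there only for $H=H'$); extending it to distinct $H,H'$, together with the one-sidedness of Frobenius reciprocity in the infinite-dimensional setting (cf.\ the Note following Theorem~\ref{fund1}), would be at least as much work as the direct inductive argument above.
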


\begin{proof} Relabelling, if necessary, we may assume that
$\be_1$ (resp. $\be_1'$) is the largest element of
$\{\be_1,\dots,\be_m\}$ (resp. $\{\be'_1,\dots,\be'_{m'}\}$).

By construction, each $V(\al_i,\be_i,\lm_i)$ (resp.
$V(\al'_i,\be'_i,\lm'_i)$), $i>1$, is acted upon trivially
by $M_{\al_1\be_1}$ (resp. $M_{\al'_1\be'_1}$). Therefore
$M_{\al_1\be_1}$ (resp. $M_{\al'_1\be'_1}$) acts on $U$ (resp.
$U'$) via scalar operators determined by $\lm_1$ (resp. $\lm'_1$).

Suppose, if possible, that $\be_1<\be'_1$ (resp. $\be_1'<\be_1$).
Then $M_{\al'_1\be'_1}$ (resp. $M_{\al_1\be_1}$) acts trivially on
$U$ (resp. $U'$). Thus
$$\Hom_{M_{\al'_1\be'_1}}(U,U')=0\text{ (resp. }\Hom_{M_{\al_1\be_1}}(U,U')=0),
$$
whence $\Hom_{M}(U,U')=0$, a
contradiction. This forces $\be_1=\be'_1$.

Suppose, if possible, that $\al_1'<\al_1$ (resp. $\al_1<\al'_1$).
Then $M_{\al'_1\be'_1}$ (resp. $M_{\al_1\be_1}$) acts trivially on
$U$ (resp. $U'$), regardless of how $\al_1$ and $\al'_1$ compare
to the other $\al_i$ and $\al'_i$. As above, this leads to the
contradiction $\Hom_{M}(U,U')=0$, so $\al_1=\al'_1$.

Since $M_{\al_1\be_1}$ acts on $U$ (resp. $U'$) via scalar
operators determined by $\lm_1$ (resp. $\lm'_1$), the condition
$\Hom_M(U,U')\neq 0$ forces $\lm_1=\lm'_1$.

On the other hand, we have the decomposition
$$
M=M((\rightarrow, \be_1))\rtimes M((\leftarrow,\be_1]),
$$
where $M((\rightarrow,\be_1))$ acts trivially on $U$ and $U'$, so
\begin{equation}
\label{noze} \Hom_{M((\leftarrow,\be_1])}(U,U')\neq 0.
\end{equation}
Set
$$
W=V(\al_m,\be_m,\lm_m)\otimes\cdots\otimes V(\al_2,\be_2,\lm_2),\,
W'=V(\al'_{m'},\be'_{m'},\lm'_{m'})\otimes\cdots\otimes
V(\al'_2,\be'_2,\lm'_2),
$$
with $W$ (resp. $W'$) the trivial module if $m=1$ (resp. $m'=1$).
We also have the decomposition
\begin{equation}
\label{noze2} M((\leftarrow,\be_1])=M((\downarrow,\be_1])\rtimes
M((\leftarrow,\be_1)),
\end{equation}
where $M((\downarrow,\be_1])$ is a normal subgroup of
$M((\leftarrow,\be_1])$ acting trivially on $W$ and~$W'$.
Moreover, by above,
$$
U=W\otimes V(\al_1,\be_1,\lm_1)\text{ and }U'=W'\otimes
V(\al_1,\be_1,\lm_1).
$$
We readily see that
$\res_{M((\downarrow,\be_1])}^{M((\leftarrow,\be_1])}
V(\al_1,\be_1,\lm_1)$ is a multiplicity-free completely reducible
$M((\downarrow,\be_1])$-module with 1-dimensional irreducible
constituents. It follows from Corollary \ref{arreglo},
(\ref{noze}) and  (\ref{noze2}) that
$$
\Hom_{M((\leftarrow,\be_1))}(W,W')\neq 0,
$$
where $M((\leftarrow,\be_1))$ is a McLain group. The above
argument shows that $m=1$ if and only if $m'=1$, while otherwise
$W$ and $W'$ are basic $M((\leftarrow,\be_1))$-modules, and
induction applies.
\end{proof}


\begin{notation}{\rm We fix a basic
subset $D=\{(\al_1,\be_1),\dots,(\al_m,\be_m)\}$ of $\Phi$ for the
remainder of the paper, as well as a function  $f$ from $D$ into
the set of all right primitive linear characters $R^+\to F^*$, say
$f(\al_i,\be_i)=\la_i$, $1\leq i\leq m$. Moreover, in the presence
of $(D,f)$, the following notation will be in effect from now on:
$$
\Gamma=\underset{1\leq i\leq
m}\bigcap(\Phi\setminus[\al_i,\rightarrow,\be_i))=\Phi\setminus(\underset{1\leq
i\leq m}\bigcup [\al_i,\rightarrow,\be_i)),
$$
$$
H_i=M(\Phi\setminus [\al_i,\rightarrow,\be_i)),\quad 1\leq i\leq
m,
$$
$$
H=M(\Gamma)=\underset{1\leq i\leq m}\bigcap H_i,
$$
$$
\widehat{\lm_i}:H_i\to F^*,
$$
the group homomorphism that extends $\lm_i:M_{\al_i\be_i}\to F^*$
(we are identifying $M_{\al_i\be_i}$ with $R^+$ via (\ref{rma}))
and is trivial on all $M_{\al\be}$ with $(\al,\be)\in
\Phi\setminus [\al_i,\rightarrow,\be_i]$,
$$
\lm=\widehat{\lm_1}|_{H}\cdots \widehat{\lm_m}|_{H},
$$
$$
W=Fw,
$$
a 1-dimensional $H$-module acted upon by $H$ via $\lm$.
}
\end{notation}

\begin{theorem}\label{monom} $V(D,f)\cong
\ind_H^M W$.
\end{theorem}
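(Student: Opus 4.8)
The plan is to prove the isomorphism $V(D,f) \cong \ind_H^M W$ by induction on $m$, peeling off one elementary factor at a time, exactly mirroring the inductive bookkeeping used in the proof of Theorem~\ref{disy}. For the base case $m=1$, the statement reduces to identifying the single elementary module $V(\al_1,\be_1,\lm_1)$, viewed as an $M$-module via the decomposition (\ref{desz}), with $\ind_{H_1}^M \widehat{\lm_1}$. Here $H_1 = M(\Phi \setminus [\al_1,\rightarrow,\be_1))$, and one should check that $\Phi \setminus [\al_1,\rightarrow,\be_1)$ contains $(\al_1,\leftarrow) \cup (\rightarrow,\be_1)$ together with $[\al_1,\downarrow,\be_1]$, so that $H_1$ is exactly $M(\al_1,\leftarrow)M(\rightarrow,\be_1) \rtimes (M([\al_1,\downarrow,\be_1])\rtimes M((\al_1,\be_1)))$ modulo the pieces on which $\widehat{\lm_1}$ is trivial. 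Inducing the one-dimensional character $\widehat{\lm_1}$ in stages through the semidirect product in (\ref{desz}), and using that $V(\al_1,\be_1,\lm_1) = \ind_{M([\al_1,\downarrow,\be_1])\rtimes M((\al_1,\be_1))}^{M([\al_1,\be_1])} W$ by the definition in \S\ref{elemeM}, gives the base case; the transitivity of induction does all the work.

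For the inductive step I would split off the factor corresponding to the largest $\be_i$, say (after relabelling) $\be_1$, just as in Theorem~\ref{disy}. Write $D' = D \setminus \{(\al_1,\be_1)\}$, $f' = f|_{D'}$, and $W' = V(\al_m,\be_m,\lm_m) \otimes \cdots \otimes V(\al_2,\be_2,\lm_2)$, so that $V(D,f) = W' \otimes V(\al_1,\be_1,\lm_1)$. Using $M = M((\rightarrow,\be_1)) \rtimes M((\leftarrow,\be_1])$, both tensor factors are inflated from $G_1 := M((\leftarrow,\be_1])$, which is itself a McLain group on the set $(\leftarrow,\be_1] := \{\gamma \in \Lambda \mid \gamma \le \be_1\}$ (using here the basic fact quoted in \S\ref{mc} that pattern subgroups of this shape are McLain groups). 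So it suffices to establish the isomorphism over $G_1$. Over $G_1$, the module $W'$ is a basic module for the smaller McLain group $M((\leftarrow,\be_1))$ (because $\be_1$ is maximal among the $\be_i$, none of the remaining intervals $[\al_i,\downarrow,\be_i]$ meet the top row), and by induction $W' \cong \ind_{H'}^{M((\leftarrow,\be_1))} W'_0$, where $H'$ and $W'_0$ are the data built from $(D',f')$; inflating to $G_1 = M((\downarrow,\be_1]) \rtimes M((\leftarrow,\be_1))$ keeps this an induced module. Meanwhile $V(\al_1,\be_1,\lm_1)$ over $G_1$ is, by the base-case analysis applied inside $G_1$, induced from the appropriate $\widehat{\lm_1}$ on $H_1 \cap G_1$.

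The crux is then a tensor-product-of-induced-modules identity: if $G$ is a group with subgroups $A$, $B$ and $A$-, $B$-modules $X$, $Y$ respectively (here thought of as already inflated to $G$), then under suitable hypotheses $\ind_A^G X \otimes \ind_B^G Y \cong \ind_{A \cap B}^G (X|_{A\cap B} \otimes Y|_{A\cap B})$. In our situation $A = H_1$ (or $H_1 \cap G_1$), $B$ is the inflation of $H'$, their intersection is $H$ (this is precisely the identity $H = \bigcap_i H_i$ in the Notation block, now organized as $H = (H' \text{ inflated}) \cap H_1$), and $X|_{A\cap B} \otimes Y|_{A\cap B}$ acts via the product character $\widehat{\lm_1}|_H \cdot (\text{the }\lm\text{-character of }W'_0)|_H = \lm$, giving $W$. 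I expect the main obstacle to be verifying the hypotheses that make this tensor-induction identity valid in the present possibly-infinite setting: one needs the product map $A \times B \to G$ (equivalently a transversal compatibility: every double coset behaves correctly, or more concretely that $G = AB$ and the relevant Mackey-type computation collapses). In the finite case this is a routine Mackey/character computation; here one should instead argue structurally using Proposition~\ref{trax} (which provides explicit transversals of pattern subgroups inside pattern subgroups) to exhibit a common transversal for $A \cap B$ in $A$ and for $B$ in $G$, from which the isomorphism follows by matching basis vectors $t \otimes (w \otimes w')$. Carefully pinning down that the closed subsets $\Phi \setminus [\al_1,\rightarrow,\be_1)$, the inflation of $\Gamma'$, and $\Gamma$ stand in exactly the "$\Gamma \subseteq \Gamma_1$ with $\Gamma$ normal" relationship required by Proposition~\ref{trax} is the delicate point; everything else is bookkeeping with semidirect-product decompositions and transitivity of induction.
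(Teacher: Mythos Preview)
Your proposal is correct and uses the same core idea as the paper: induction on $m=|D|$ together with the Mackey Tensor Product Theorem applied to the two factors $Y=V(\al_m,\be_m,\lm_m)\otimes\cdots\otimes V(\al_2,\be_2,\lm_2)$ and $V(\al_1,\be_1,\lm_1)$. The paper's execution is considerably more direct, however. It applies the inductive hypothesis at the level of $M$ itself (same $M$, smaller $D$), obtaining $Y\cong\ind_{H_0}^M T$ with $H_0=H_2\cap\cdots\cap H_m$, and then invokes the Mackey Tensor Product Theorem \cite[Theorem~2.1]{Sz} using only the two verifications $H_0H_1=M$ and $H_0\cap H_1=H$. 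Your descent to $G_1=M((\leftarrow,\be_1])$ and then to the smaller McLain group $M((\leftarrow,\be_1))$, followed by inflation back up, is unnecessary: since each $V(\al_i,\be_i,\lm_i)$ is already an $M$-module and $D'=D\setminus\{(\al_1,\be_1)\}$ is a basic subset of the same $\Phi$, the inductive hypothesis applies directly over $M$. Likewise, your concern about whether the tensor-induction identity holds in the infinite setting is handled by simply citing the general form in \cite{Sz}; you do not need to build transversals by hand via Proposition~\ref{trax}.
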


\begin{proof} We have $V(D,f)=Y\otimes V(\al_1,\be_1,\lm_1)$, where
$$Y=V(\al_m,\be_m,\lm_m)\otimes\cdots \otimes V(\al_2,\be_2,\lm_2),\quad
V(\al_1,\be_1,\lm_1)=\ind_{H_1}^M Z$$ and $Z=Fz$ is acted upon
$H_1$ via $\widehat{\lm_1}$. By induction, $Y\cong \ind_{H_0}^M
T$, where $H_0=H_2\cap\cdots\cap H_m$ and $T=Ft$ is acted upon
$H_0$ via $\widehat{\lm_2}|_{H_0}\cdots \widehat{\lm_m}|_{H_0}$.
Since $H_0H_1=M$ and $H_0\cap H_1=H$, Mackey Tensor Product
Theorem (cf. \cite[Theorem 2.1]{Sz}) yields $V(D,f)\cong\ind_H^M
W$.
\end{proof}

\begin{definition} Let $\al<\be$ and  $\ga<\de$ be in $\Lambda$. We
say that the intervals $(\al,\be)$ and $(\ga,\de)$ of $\Lambda$
are \emph{nested} if $\al<\ga<\de<\be$ or $\ga<\al<\be<\de$, and
\emph{overlapping} if $\al<\ga<\be<\de$ or $\ga<\al<\de<\be$.
\end{definition}

\begin{definition} We say that $D$ is \emph{nested} (resp. \emph{non-overlapping}) if
the open intervals (resp. none of the open intervals) of $\Lambda$
determined by the elements of $D$ are nested (resp. overlapping). 
\end{definition}

\begin{definition} We say that a disjoint union $D=D_1\cup\dots\cup D_n$ is a \emph{disconnection}
if each $D_i$ is non-empty and, for each $1\leq i<n$,
the conditions $(\al,\be)\in D_i$ and $(\ga,\de)\in D_{i+1}$ imply $\be\leq\ga$.
\end{definition}

\begin{notation}
Given a group $G$ and $G$-modules $U$ and $V$, we set
$$(U,V)_G=\dm_F\Hom_G(U,V).$$
\end{notation}


\begin{theorem}\label{nest} Suppose $D$ is nested. Then $V(D,f)$
is irreducible and
$$
(V(D,f),V(D,f))_M=1.
$$
\end{theorem}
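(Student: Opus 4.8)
The plan is to prove irreducibility by induction on $m=|D|$, peeling off the elementary module attached to the largest right endpoint, and showing at each stage that the relevant subgroups generate $M$ and interact in the way required by the Clifford-theoretic machinery of Section 3. After relabelling, assume $\be_1$ is the largest of the $\be_i$. Since $D$ is nested, the interval $(\al_1,\be_1)$ either contains all of the intervals $(\al_j,\be_j)$ with $j>1$ that it meets, or is disjoint from them; in either case the remaining data $D'=D\setminus\{(\al_1,\be_1)\}$ is again nested, and $W'=V(\al_m,\be_m,\lm_m)\otimes\cdots\otimes V(\al_2,\be_2,\lm_2)$ is (by the inductive hypothesis) an irreducible $M$-module with $\End_M(W')=F$ — the endomorphism statement coming along for free from $(W',W')_M=1$, which is part of what we are proving.

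Next I would set up the tensor factorization $V(D,f)=W'\otimes V(\al_1,\be_1,\lm_1)$ and check the hypotheses of Corollary~\ref{cli6} (or, more precisely, an iterated application of Theorem~\ref{cli5}). The key structural facts to verify are: (i) $V(\al_1,\be_1,\lm_1)$ is irreducible and has trivial endomorphism algebra as a module for its ``own'' subgroup — this is exactly Theorem~\ref{schro}, which says $V(\al_1,\be_1,\lm_1)$ is irreducible over $M^{\al_1\be_1}=M([\al_1,\downarrow,\be_1])\rtimes M([\al_1,\rightarrow,\be_1))$ with $\End=F$; (ii) the subgroup governing $V(\al_1,\be_1,\lm_1)$ acts trivially on each other factor $V(\al_j,\be_j,\lm_j)$, and conversely. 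Point (ii) is where nestedness does the real work: if $(\al_1,\be_1)$ and $(\al_j,\be_j)$ are nested rather than overlapping, then the relevant pattern subgroups are ``compatible'' — one can arrange that the subgroup acting nontrivially on the $\be_1$-factor lies in the kernel of the action on the $\be_j$-factor and vice versa, because the positions $(\gamma,\delta)$ on which $V(\al_1,\be_1,\lm_1)$ depends (roughly, those with $\al_1\le\gamma$, $\delta\le\be_1$) are disjoint from, or nested inside, those on which $V(\al_j,\be_j,\lm_j)$ depends, with no ``crossing'' pair that would force a Heisenberg-type obstruction. Once all the mutual-triviality relations are in place, Corollary~\ref{cli6} immediately gives that $V(D,f)=V_1\otimes\cdots\otimes V_m$ is irreducible as a module for $H_{\mathrm{big}}=\langle H^{(1)},\dots,H^{(m)}\rangle$, and since (using the total order and the decomposition \eqref{desz} for each factor) this generated subgroup is all of $M$, we conclude $V(D,f)$ is irreducible over $M$.

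Finally, the statement $(V(D,f),V(D,f))_M=1$ follows from irreducibility together with $\End_M(V(D,f))=F$. The endomorphism-algebra equality can be harvested from the same application of Theorem~\ref{cli5}/Corollary~\ref{cli6}: each of those results delivers not only irreducibility of the tensor product but also that its endomorphism ring over the generated group equals $F$ (Theorem~\ref{cli5} states $\End_H(V)=F$ explicitly, and the inductive step in Corollary~\ref{cli6} preserves this). Alternatively one can invoke Theorem~\ref{disy} with $(D,f)=(D',f')$ to see $\Hom_M(V(D,f),V(D,f))$ is nonzero and one-dimensional once irreducibility and the splitting condition $\End=F$ are known; but the cleanest route is just to carry $\End=F$ through the induction alongside irreducibility.

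The main obstacle I expect is point (ii) — the careful bookkeeping of which positions each elementary module ``sees'' and verifying that nestedness (as opposed to mere distinctness of the $\al$'s and $\be$'s in a basic set) is precisely what kills every potential crossing pair, so that the two subgroups attached to any two of the factors satisfy the mutual-triviality hypothesis of Corollary~\ref{cli6}. In particular one must handle both nesting patterns ($\al_1<\al_j<\be_j<\be_1$ and the disjoint case $\be_j\le\al_1$, using that $\be_1$ is maximal) and confirm that the generated subgroup really exhausts $M$; overlapping intervals are exactly the configuration where this fails, which is why the complementary Theorem~\ref{nest}'s converse (irreducibility $\iff\Omega=\emptyset$) holds.
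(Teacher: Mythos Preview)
Your proposed route via Theorem~\ref{cli5}/Corollary~\ref{cli6} has a genuine gap at point~(ii): the ``conversely'' direction of mutual triviality fails in the nested situation. If $\al_1<\al_j<\be_j<\be_1$, then $M^{\al_j\be_j}\subseteq M((\al_1,\be_1))$, and $M((\al_1,\be_1))$ does \emph{not} act trivially on the outer elementary module $V(\al_1,\be_1,\lm_1)$. Concretely, take $\Lambda=\{1<2<3<4\}$ and $(\al_1,\be_1)=(1,4)$: writing $V(1,4,\lm)$ as $\bigoplus_{a,b}(1+ae_{12}+be_{13})W$, the element $1+re_{23}\in M((\al_1,\be_1))$ sends $(1+ae_{12}+be_{13})w$ to $(1+ae_{12}+(b-ar)e_{13})w$, a nontrivial permutation of the summands. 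So there is no natural choice of $H_2$ that both acts trivially on $V(\al_1,\be_1,\lm_1)$ and makes the inner tensor $W'$ irreducible; the symmetric hypotheses of Theorem~\ref{cli5} cannot be met. (Corollary~\ref{cli6} \emph{is} the right tool for the disconnected pieces in Theorem~\ref{nono}, where the intervals are genuinely disjoint, but not here.)

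The paper instead uses the \emph{asymmetric} Gallagher-type result, Theorem~\ref{cli4}, together with Lemma~\ref{forma}. One works with a normal subgroup $N\unlhd M$ containing $M^{\al_1\be_1}$---namely $N=M([\al_1,\leftarrow))M((\rightarrow,\be_1])$---which acts trivially on every inner factor $V(\al_j,\be_j,\lm_j)$, $j>1$. Since $M^{\al_1\be_1}\subseteq N$, Theorem~\ref{schro} gives that $\res_N V(\al_1,\be_1,\lm_1)$ is irreducible with $\End_N=F$. Then Theorem~\ref{cli4} says $W'\otimes V(\al_1,\be_1,\lm_1)$ is irreducible over $M$ once $W'$ is, and Lemma~\ref{forma} transports the endomorphism count. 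The inductive step is carried out in the smaller McLain group $M((\al_1,\be_1))$ (the quotient $M/N$), where $W'$ is again a nested basic module. Note also that the paper's definition of ``$D$ nested'' forces \emph{all} pairs to be nested, so after taking $\be_1$ maximal one automatically has $\al_1<\al_2<\cdots<\al_m<\be_m<\cdots<\be_1$; there is no disjoint case to handle at this stage.
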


\begin{proof} The fact that $D$ is nested translates as follows:
$$ \al_1<\al_2<\dots<\al_m<
\be_m<\dots<\be_2<\be_1.
$$

By Theorem \ref{schro}, $V(\al_1,\be_1,\lm_1)$ is an irreducible
$M$-module whose restriction to $M^{\al_1\be_1}$ remains
irreducible and $\End_{M^{\al_1\be_1}} V(\al_1,\be_1,\lm_1)=F$. By Lemma \ref{forma}
and Theorem \ref{cli4}, if $W$ is any irreducible $M$-module acted
upon trivially by $M^{\al_1\be_1}$ then $W\otimes
V(\al_1,\be_1,\lm_1)$ is an irreducible $M$-module and
$$
(W\otimes V(\al_1,\be_1,\lm_1),W\otimes V(\al_1,\be_1,\lm_1))_M=(W,W)_M.
$$

We have the decomposition
$$M=M([\al_1,\leftarrow))M((\rightarrow,\be_1])\rtimes
M((\al_1,\be_1)),$$ where
$M([\al_1,\leftarrow))M((\rightarrow,\be_1])$, and hence
$M^{\al_1\be_1}$, acts trivially on each $V(\al_i,\be_i,\lm_i)$,
$1<i\leq m$. Moreover, $W=V(\al_m,\be_m,\lm_m)\otimes\cdots\otimes
V(\al_2,\be_2,\lm_2)$ is a nested module of the McLain group
$M((\al_1,\be_1))$. By induction, $W$ is an irreducible
$M((\al_1,\be_1))$-module, where $(W,W)_{M((\al_1,\be_1))}=1$, and therefore $W$ is an irreducible
$M$-module acted upon trivially by $M^{\al_1\be_1}$ and satisfying $(W,W)_M=1$.
The result follows.
\end{proof}

\begin{theorem}\label{nono} Suppose $D$ is non-overlapping. Then $V(D,f)$
is irreducible.
\end{theorem}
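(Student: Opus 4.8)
The plan is to reduce the non-overlapping case to the nested case already handled in Theorem \ref{nest}, by exploiting the \emph{disconnection} structure of $D$. First I would observe that a non-overlapping basic set $D$ decomposes canonically as a disjoint union $D=D_1\cup\cdots\cup D_n$ which is a disconnection, where each block $D_j$ is a maximal ``connected'' piece in the sense that the intervals it determines cannot be separated, and within each block the intervals are forced to be nested. More precisely: since $D$ is basic, all the $\al_i$ are distinct and all the $\be_i$ are distinct; since no two intervals overlap, any two intervals are either disjoint (one lies entirely to the left of the other, i.e. $\be_i\le\al_j$) or nested. Grouping the intervals into equivalence classes under the transitive closure of the nesting relation yields blocks $D_1,\dots,D_n$ that are pairwise separated, so $D=D_1\cup\cdots\cup D_n$ is a disconnection, and each $D_j$ is itself a nested basic set.

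Next I would analyze how the group $M$ and the module $V(D,f)$ decompose along this disconnection. For each block $D_j$ let $[\delta_{j-1},\delta_j]$ be the smallest closed interval of $\Lambda$ containing all endpoints appearing in $D_j$; by the disconnection property these intervals are pairwise disjoint and appropriately ordered, say $\delta_0\le\delta_1\le\cdots$ with $\delta_{j-1}<\delta_j$ for each block. Then, much as in the proof of Theorem \ref{nest}, $M$ has a decomposition of the form
$$
M = \Big(\prod_{j} M\big((\leftarrow,\delta_{j-1}]\big)\cdot M\big([\delta_j,\rightarrow)\big)\Big)\rtimes \prod_{j} M\big([\delta_{j-1},\delta_j]\big),
$$
where the subgroups $M([\delta_{j-1},\delta_j])$ for distinct $j$ commute with one another and each $V(D_j,f|_{D_j})$ is, as an $M$-module, a basic module for the McLain group $M([\delta_{j-1},\delta_j])$ inflated to $M$ with all the other factors acting trivially. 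Since each $D_j$ is nested, Theorem \ref{nest} tells us that $V(D_j,f|_{D_j})$ is an irreducible $M([\delta_{j-1},\delta_j])$-module with $\End$-algebra equal to $F$. Because $V(D,f)$ is (up to reordering of tensor factors, which only affects the module up to isomorphism) the tensor product $V(D_1,f|_{D_1})\otimes\cdots\otimes V(D_n,f|_{D_n})$, Corollary \ref{cli6} applies with $H_j=M([\delta_{j-1},\delta_j])$ and $V_j=V(D_j,f|_{D_j})$: each $H_j$ acts trivially on each $V_k$ with $k\neq j$, each $V_j$ is irreducible and absolutely so over $H_j$, hence $V(D,f)$ is an irreducible module for $\langle H_1,\dots,H_n\rangle$, and a fortiori for $M$.

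The main obstacle I anticipate is the bookkeeping in the second step: verifying that the pattern subgroups $M([\delta_{j-1},\delta_j])$ genuinely commute and that the complementary normal subgroup acts trivially on $V(D,f)$, i.e. checking that the closed subsets $[\delta_{j-1},\delta_j]$ interact correctly and that the various $M_{\al\be}$ with $(\al,\be)$ outside $\bigcup_j[\delta_{j-1},\delta_j]$ act trivially on every elementary factor. This is a matter of carefully tracking which positions $(\al,\be)$ sit in $[\al_i,\downarrow,\be_i]$, $[\al_i,\rightarrow,\be_i)$, or the ``exterior'' of each interval, using (\ref{desz}) and the fact that $\Gamma\setminus[\Gamma,\Gamma]$-type bookkeeping from \S\ref{mc} is compatible with the disconnection. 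Once the block decomposition is set up cleanly, the representation-theoretic input is entirely supplied by Theorem \ref{nest} and Corollary \ref{cli6}, and no further computation is needed. A brief remark could note that the same disconnection argument, run in reverse, shows that a non-overlapping $D$ whose blocks are singletons recovers the elementary-module case.
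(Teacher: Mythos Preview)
Your proposal is correct and follows essentially the same approach as the paper: decompose the non-overlapping $D$ into a disconnection $D=D_1\cup\cdots\cup D_n$ with each $D_j$ nested, apply Theorem~\ref{nest} to each block to obtain irreducibility with one-dimensional endomorphism algebra over the corresponding $M([\al_j,\be_j])$, and then invoke Corollary~\ref{cli6} to conclude irreducibility of the tensor product. The paper's proof is terser (it simply takes $[\al_j,\be_j]$ to be the outermost interval of each nested block and omits the bookkeeping you flag), but the logical structure is identical.
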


\begin{proof} By hypothesis $D$ has a disconnection $D=D_1\cup\dots\cup D_n$, where each $D_i$ is nested.
Let $(\al_i,\be_i)$ be the outermost interval of $D_i$, so that
$\be_i\leq\al_{i+1}$ for each $1\leq i<n$. Let $f_i$ be the
restriction of $f$ to each $D_i$. By Theorem
\ref{nest}, each $V(D_i,f_i)$ is an irreducible
$M([\al_i,\be_i])$-module satisfying
$(V(D_i,f_i),V(D_i,f_i))_{M([\al_i,\be_i])}=1$,
so $V(D,f)$ is an irreducible
$M$-module by Corollary~\ref{cli6}.
\end{proof}

\begin{lemma}\label{redu} Let $H\leq G$ be groups and let $W$ be
an $H$-module. Let $I$ be a subgroup of $G$ properly containing
$H$. Suppose the action of $H$ on $W$ is extendible to~$I$ and call
the resulting $I$-module $W_1$. Then $\ind_H^G W$ is reducible.
\end{lemma}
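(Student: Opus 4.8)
The goal is to show that if $W$ extends to an $I$-module $W_1$ with $H < I \leq G$, then $\ind_H^G W$ is reducible. The plan is to realize $\ind_H^G W$ as an induced module from $I$ that visibly decomposes. First I would observe that, by the standard formula for inducing in stages, $\ind_H^G W = \ind_I^G(\ind_H^I W)$, so it suffices to prove that $\ind_H^I W$ is a reducible $I$-module; indeed a proper nonzero $I$-submodule of $\ind_H^I W$ induces up to a proper nonzero $G$-submodule of $\ind_H^G W$ (induction is exact and sends nonzero modules to nonzero modules, being free as a functor on the appropriate level), and one must also check the quotient is nonzero, which again follows since $\ind_H^I W$ has a proper quotient that is nonzero.

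Next I would analyze $\ind_H^I W$ directly. Since the $H$-action on $W$ extends to the $I$-action on $W_1$, and since $W_1$ and $W$ have the same underlying $F$-vector space, there is a natural $H$-module map. Concretely, consider the $I$-module map $\varphi: \ind_H^I W \to W_1$ given on the standard generators by $\varphi(g \otimes w) = g \cdot w$ (the right-hand side using the $I$-action on $W_1$); this is well-defined because for $h \in H$ the two actions agree on $W$, it is $I$-equivariant by construction, and it is surjective since already $1 \otimes w \mapsto w$. Because $[I:H] \geq 2$ (as $H < I$ properly), we have $\dm_F \ind_H^I W = [I:H]\cdot \dm_F W > \dm_F W_1$, so $\varphi$ has nonzero kernel. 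Thus $\ker\varphi$ is a nonzero proper $I$-submodule of $\ind_H^I W$, witnessing reducibility.

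The main subtlety — and the step I would be most careful about — is the passage back up from $I$ to $G$ in the infinite-dimensional setting: I need that a proper nonzero $I$-submodule and a proper nonzero $I$-quotient of $\ind_H^I W$ yield the same for $\ind_H^G W$. Since induction $\ind_I^G$ is the functor $FG \otimes_{FI}(-)$, which is exact here (as $FG$ is free, hence flat, as a right $FI$-module), a short exact sequence $0 \to \ker\varphi \to \ind_H^I W \to W_1 \to 0$ of $I$-modules induces a short exact sequence of $G$-modules, and neither end term becomes zero because $\ind_I^G$ applied to a nonzero module is nonzero (it contains $1 \otimes X$). One small point to handle cleanly is the degenerate possibility that $\ind_I^G(\ker\varphi)$ or $\ind_I^G W_1$ could fail to be proper in $\ind_H^G W$ — but properness is preserved since the quotient $\ind_H^G W / \ind_I^G(\ker\varphi) \cong \ind_I^G W_1 \neq 0$ and likewise the submodule $\ind_I^G(\ker\varphi) \neq 0$. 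This completes the argument with no reliance on finiteness of $G$ or finite dimensionality of $W$.
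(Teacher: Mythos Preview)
Your strategy is essentially the paper's: reduce via transitivity to showing $\ind_H^I W$ is reducible, then exhibit a proper nonzero quotient. The paper packages this through the tensor identity $\ind_H^I W\cong \ind_H^I(\res_H^I W_1)\cong P\otimes W_1$, where $P=\ind_H^I F$ is the permutation module on $I/H$, and then observes that $P$ (hence $P\otimes W_1$) is reducible once $\dim P>1$. Your counit map $\varphi:\ind_H^I W\to W_1$ is exactly the map $\epsilon\otimes\mathrm{id}_{W_1}$ under this identification, with $\epsilon:P\to F$ the augmentation; so the two arguments are the same at heart.

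There is, however, a genuine gap in your justification that $\ker\varphi\neq 0$. You argue that $\dim_F\ind_H^I W=[I:H]\cdot\dim_F W>\dim_F W_1$, but this strict inequality fails when $\dim_F W$ is infinite: for infinite cardinals $\kappa$ one has $2\cdot\kappa=\kappa$, so a surjection between spaces of equal infinite dimension can perfectly well be an isomorphism. Since the paper works explicitly in the infinite-dimensional setting, this step needs repair. The fix is immediate: pick $t\in I\setminus H$ and $0\neq w\in W$; then
\[
t\otimes w \;-\; 1\otimes (t\cdot_{W_1} w)
\]
lies in $\ker\varphi$ and is nonzero because $tW$ and $1\cdot W$ sit in distinct summands of $\ind_H^I W=\bigoplus_{s} sW$. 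Equivalently, in the paper's formulation, $\ker\varphi=\ker(\epsilon)\otimes W_1$, which is nonzero since $\dim P=[I:H]\geq 2$ forces $\ker(\epsilon)\neq 0$. With this correction your argument is complete, and your handling of the passage from $I$ to $G$ via exactness and faithfulness of $\ind_I^G$ is fine.
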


\begin{proof} Let $T=Ft$ be the trivial $H$-module and let $P$ be the permutation
$I$-module associated to the coset space $I/H$. Then
$$
\begin{aligned}
\ind_H^G W &\cong \ind_I^G \ind_H^I W\\
&\cong \ind_I^G\ind_H^{I} (T\otimes W)\\
&\cong\ind_I^G \ind_H^{I} (T\otimes \res_H^I W_1)\\
&\cong\ind_I^G ((\ind_H^{I} T)\otimes W_1)\quad \text{(cf. \cite[\S 2]{Sz})}\\
&\cong\ind_I^G (P\otimes W_1).\\
\end{aligned}
$$
Since a permutation module of dimension $>1$ (finite or infinite)
is always reducible, so is $\ind_H^G W$.
\end{proof}

In regards to Lemma \ref{redu}, see the examples given in \cite[\S 5]{Sz}.

\begin{theorem}\label{yeso} Suppose $D$ is overlapping. Then $V(D,f)$
is reducible.
\end{theorem}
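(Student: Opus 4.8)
The plan is to reduce the reducibility of $V(D,f)$ to an application of Lemma \ref{redu}, by exhibiting a subgroup $I$ properly containing $H$ to which the action of $H$ on $W$ extends. Recall from Theorem \ref{monom} that $V(D,f)\cong\ind_H^M W$, so it suffices to find such an $I$.

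Since $D$ is overlapping, there are indices $i\neq j$ with, say, $\al_i<\al_j<\be_i<\be_j$ (after relabelling). The natural candidate is to enlarge $\Gamma$ by adjoining the pair $(\al_i,\al_j)$, or more precisely a suitable single pair $(\al,\ga)$ lying in the ``overlap'' region. First I would check that $(\al_i,\al_j)\notin\Gamma$: indeed $(\al_i,\al_j)\in[\al_i,\rightarrow,\be_i)$ precisely because $\al_j<\be_i$, so $(\al_i,\al_j)$ is excluded from $\Gamma$ by the $i$-th term in the intersection. Next I would verify that $\Gamma_0:=\Gamma\cup\{(\al_i,\al_j)\}$ is still closed, and that $\Gamma$ is normal in $\Gamma_0$ — this requires checking that composing $(\al_i,\al_j)$ on either side with a pair from $\Gamma$ lands back in $\Gamma$ (not merely in $\Gamma_0$); here one uses that $\al_i$ and $\al_j$ are the \emph{distinct} left endpoints from $D$, so no pair of the form $(\al_j,\de)$ or $(\de,\al_i)$ in $\Gamma$ can combine with $(\al_i,\al_j)$ to produce a pair outside $\Gamma$. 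Then $H=M(\Gamma)\unlhd I:=M(\Gamma_0)$ and $[I:H]=|R|>1$ by Proposition \ref{trax}, so $H$ is properly contained in $I$.

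The remaining point is to extend the linear character $\lm$ of $H$ to $I$. Since $I/H$ is abelian (indeed $\cong R^+$ via the single extra pair) and $H\unlhd I$, it is enough to observe that $\lm$ is $I$-invariant and then extend; the cleanest route is to define $\widehat\lm_0:I\to F^*$ by declaring it to agree with each $\widehat{\lm_k}$ on the appropriate $M_{\al\be}$ and to send $1+re_{\al_i\al_j}$ to $1$, and check this is a homomorphism using the commutator formula together with the fact that no commutator of $M_{\al_i\al_j}$ with a generator of $H$ can produce a factor on which $\lm$ is non-trivial (again by the distinctness of the endpoints). Restricting $\widehat\lm_0$ to $H$ recovers $\lm$, so $\lm$ extends to $I$, and Lemma \ref{redu} applies to give that $\ind_H^M W\cong V(D,f)$ is reducible.

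The main obstacle will be the bookkeeping in verifying normality of $\Gamma$ in $\Gamma_0$ and the homomorphism property of the extension: one must be careful that the chosen overlapping pair can be adjoined without forcing further pairs into the closure, and that no unwanted commutators appear. This is exactly where the ``basic'' hypothesis (distinct $\al_i$'s and distinct $\be_i$'s) does the work, so the argument should be organized around isolating, for the fixed overlapping pair $(\al_i,\al_j)$, precisely which products $(\al_i,\al_j)\cdot(\al_j,\de)$ and $(\de,\al_i)\cdot(\al_i,\al_j)$ can occur and confirming each stays in $\Gamma$; alternatively one may prefer to adjoin the pair $(\be_i,\be_j)$ instead, whichever makes the closure check shorter.
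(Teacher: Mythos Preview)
Your approach is correct and is essentially the same as the paper's: both adjoin the single root group $M_{\al_i\al_j}$ coming from an overlapping pair $\al_i<\al_j<\be_i<\be_j$, verify that it normalizes $H$ and stabilizes $\lm$, extend $\lm$ trivially across it, and then invoke Lemma~\ref{redu}. The paper phrases this as a semidirect product $I=H\rtimes M_{\al_1\al_2}$ after first remarking that it suffices to treat $|D|=2$, whereas you carry out the closure and normality checks for $\Gamma_0=\Gamma\cup\{(\al_i,\al_j)\}$ directly in the general case; the verifications you outline (using distinctness of the $\al_k$'s) go through exactly as you indicate.
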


\begin{proof} It suffices to prove the result when $|D|=2$. We have $V(D,f)\cong
\ind_H^M W$, as in Theorem \ref{monom}.

Note that $M_{\al_1\al_2}\cap H=1$. Moreover, $M_{\al_1\al_2}$ normalizes
$H$ and stabilizes $\lm$. Therefore $\lm$ is extendible to
$I=H\rtimes M_{\al_1\al_2}$ via any map
$$
 h(1+re_{\al_1\al_2})\mapsto \lm(h)\chi(r)
$$
with $\chi\in\Hom(R^+,F^*)$. Now apply Lemma \ref{redu}.
\end{proof}

\begin{theorem}\label{basirr} $V(D,f)$ is irreducible if and only if
$D$ is non-overlapping.
\end{theorem}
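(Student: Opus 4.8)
The plan is to assemble \Cref{basirr} from the three results that have already been proved. Since $D$ is basic, it is automatically \emph{nested} if and only if it is non-overlapping in the case $|D|\le 1$ trivially, and in general exactly one of the following holds for each unordered pair of members of $D$: the two open intervals are nested, or they are overlapping, or they are disjoint (meaning $\be_i\le\al_j$ after relabelling). This is a purely order-theoretic trichotomy that follows because $D$ being basic forces the two left endpoints and the two right endpoints to be pairwise distinct, so any configuration $\al_i,\al_j,\be_i,\be_j$ with $\al_i<\al_j$ falls into $\al_i<\al_j<\be_i<\be_j$ (overlapping), $\al_i<\al_j<\be_j<\be_i$ (nested), or $\al_i<\be_i\le\al_j<\be_j$ (disconnected). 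First I would record this trichotomy explicitly, and observe that ``$D$ is non-overlapping'' means: no pair is overlapping, i.e., every pair is either nested or disconnected, which is precisely the statement that $D$ admits a disconnection into nested blocks.

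For the ``if'' direction, assume $D$ is non-overlapping. Then \Cref{nono} applies directly and gives that $V(D,f)$ is irreducible; its proof already handles the decomposition of $D$ into a disconnection of nested pieces via \Cref{nest} and \Cref{cli6}. So this direction requires nothing new.

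For the ``only if'' direction, I would argue by contraposition: suppose $D$ is \emph{not} non-overlapping, i.e., some pair $(\al_i,\be_i),(\al_j,\be_j)\in D$ is overlapping. Let $D'=\{(\al_i,\be_i),(\al_j,\be_j)\}$ and let $f'$ be the restriction of $f$ to $D'$. By \Cref{yeso}, $V(D',f')$ is reducible. The task is then to promote reducibility of the two-element sub-basic-module to reducibility of the full $V(D,f)$. The natural way is to peel off the remaining tensor factors: writing $V(D,f)\cong \ind_H^M W$ as in \Cref{monom}, one shows that a proper nonzero $M$-submodule of $\ind_H^M W$ survives the induction/tensoring construction. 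Concretely, I would reorganize $V(D,f)$ so that the outermost interval belongs to the overlapping pair and use the Mackey tensor product decomposition from \Cref{monom} (or, equivalently, \Cref{redu} directly): the proof of \Cref{yeso} in fact exhibits a subgroup $I$ properly containing the relevant $H$ to which $\lm$ extends, and \Cref{redu} shows $\ind_H^M W\cong \ind_I^M(P\otimes W_1)$ with $P$ a permutation module of dimension $>1$, hence reducible. The cleanest route is to observe that the same subgroup $I=H\rtimes M_{\al_i\al_j}$ works for the full $D$: indeed $M_{\al_i\al_j}\cap H=1$ still holds (since $(\al_i,\al_j)\notin\Gamma$, as $\al_j<\be_i$ forces $(\al_i,\al_j)\in[\al_i,\rightarrow,\be_i)$), $M_{\al_i\al_j}$ normalizes $H=M(\Gamma)$, and $\lm$ is stabilized by $M_{\al_i\al_j}$ because $\widehat{\lm_k}$ is trivial on $M_{\al_i\al_j}$ for every $k$ (checking $(\al_i,\al_j)\notin[\al_k,\rightarrow,\be_k]$ for all $k$, which uses that $D$ is basic). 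Then \Cref{redu} applies verbatim to $\ind_H^M W=V(D,f)$.

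The main obstacle I anticipate is the last point: verifying that the extendability obstruction subgroup $I=H\rtimes M_{\al_i\al_j}$ genuinely sits inside $M$ with $H$ normal and $\lm$ extendible, \emph{in the presence of the extra intervals of $D\setminus D'$}. This is a finite, mechanical check on which of the sets $[\al_k,\rightarrow,\be_k]$ and $\Phi\setminus[\al_k,\rightarrow,\be_k)$ contain the pair $(\al_i,\al_j)$, but it must be done for all $k$, not just $k\in\{i,j\}$, and it is where the hypothesis that $D$ is basic (all $\al_k$ distinct, all $\be_k$ distinct) is actually used. Once that bookkeeping is in place, the conclusion is immediate from \Cref{redu}. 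An alternative, slightly softer ending avoids even this: one can instead note that by \Cref{monom} and the Mackey tensor product theorem $V(D,f)$ contains $V(D',f')$-type structure as a tensor factor, and a reducible tensor factor (with the other factor acting through a module on which the relevant subgroup acts trivially, as arranged in the decomposition \eqref{desz}) forces reducibility of the whole; but the direct appeal to \Cref{redu} as above is the shortest.
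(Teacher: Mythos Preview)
Your proposal is correct and follows the paper's route, which is the one-line observation that the two directions are exactly Theorems~\ref{nono} and~\ref{yeso}. You are overworking the ``only if'' direction: Theorem~\ref{yeso} is already stated and proved for \emph{arbitrary} overlapping $D$, not merely for $|D|=2$, so there is no need to pass to a two-element subset $D'$ and then promote reducibility back up to $V(D,f)$. The argument you sketch---showing $M_{\al_i\al_j}\cap H=1$, $M_{\al_i\al_j}$ normalizes $H$, and $M_{\al_i\al_j}$ stabilizes $\lm$, then invoking Lemma~\ref{redu}---is precisely the proof of Theorem~\ref{yeso} itself (and is later packaged more generally in Theorem~\ref{corin}). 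One small slip in your write-up: ``$M_{\al_i\al_j}$ stabilizes $\lm$'' means $\lm(ghg^{-1})=\lm(h)$ for $g\in M_{\al_i\al_j}$ and $h\in H$; this amounts to checking that the commutator factors $1+rse_{\al_i\rho}$ and $1-sre_{\pi\al_j}$ that arise lie in $\ker\lm$, which is not the same as (and does not follow from) ``$\widehat{\lm_k}$ is trivial on $M_{\al_i\al_j}$''.
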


\begin{proof} Immediate consequence of Theorems \ref{nono} and
\ref{yeso}.
\end{proof}

\section{Tools required to study the decomposition of a basic module}

\begin{theorem}\label{fund1} Let $H\unlhd I\leq G$ be groups and
let $W$ be an irreducible $H$-module stabilized by $I$. Given any
$x\in G$ let ${}^x W$ be the vector space $W$ when viewed as an
$H\cap xHx^{-1}$-module via
$$
h\cdot w=(x^{-1}hx)w.
$$
Suppose that
$$
(W,W)_H=1
$$
and that given any $x\in G\setminus I$, we have
$$
(W,{}^x W )_{H\cap xHx^{-1}}=0.
$$
Then
$$
(\ind_H^I W,\ind_H^I W)_I=[I:H]=(\ind_H^G W,\ind_H^G W)_G.
$$
\end{theorem}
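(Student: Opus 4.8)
The plan is to split the statement into two parts: the computation of $(\ind_H^I W,\ind_H^I W)_I$, which happens ``locally'' inside $I$ where $W$ is $I$-stable, and the computation of $(\ind_H^G W,\ind_H^G W)_G$, which uses the vanishing hypothesis to kill all contributions coming from double cosets outside $I$. For the first part, I would invoke Frobenius reciprocity (cf. \cite[Theorem 5.3]{Sz}) to write $(\ind_H^I W,\ind_H^I W)_I \cong (W,\res_H^I\ind_H^I W)_H$, and then a Mackey-type decomposition of $\res_H^I\ind_H^I W$ over the double cosets $H\backslash I/H$. Since $H\unlhd I$, every double coset is a single coset, so $\res_H^I\ind_H^I W \cong \bigoplus_{xH\in I/H} {}^x W$ as $H$-modules; because $I$ stabilizes $W$, each ${}^x W\cong W$, giving $[I:H]$ summands, each contributing $(W,W)_H=1$. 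Hence $(\ind_H^I W,\ind_H^I W)_I=[I:H]$. (One must be a little careful that Mackey's formula and Frobenius reciprocity hold for possibly infinite-dimensional modules and possibly infinite index; this is exactly the content of \cite[\S 2, Theorem 5.3]{Sz}, so I would cite those.)

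For the second part, the same two tools give $(\ind_H^G W,\ind_H^G W)_G \cong (W,\res_H^G\ind_H^G W)_H$ and a Mackey decomposition $\res_H^G\ind_H^G W \cong \bigoplus_{HxH\in H\backslash G/H} \ind_{H\cap xHx^{-1}}^H {}^x W$. Applying Frobenius reciprocity once more to each summand, the total dimension is $\sum_{HxH} (W,{}^x W)_{H\cap xHx^{-1}}$. The hypothesis says every double coset with representative $x\in G\setminus I$ contributes $0$, so only double cosets inside $I$ survive; and the double cosets of $H$ in $G$ contained in $I$ are exactly the (left=right, since $H\unlhd I$) cosets of $H$ in $I$, each contributing $(W,{}^xW)_H=(W,W)_H=1$ by $I$-stability. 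This gives exactly $[I:H]$ again, matching the first computation.

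The main obstacle I anticipate is bookkeeping around the infinite/infinite-index setting: making sure the Mackey decomposition of $\res_H^G\ind_H^G W$ is valid as stated (a direct sum over double cosets, with the stated summands), that Frobenius reciprocity commutes with these direct sums to justify $\dm_F\Hom$ being the sum of the pieces, and that the set of double cosets of $H$ inside $I$ is genuinely in bijection with $I/H$. Once those structural facts are in hand (all from \cite{Sz}), the proof is essentially a matching of two sums, with the vanishing hypothesis doing the real work of confining the second sum to $I$. A secondary subtlety: one should note that $[I:H]$ may be infinite, in which case both sides of the asserted equality are the same infinite cardinal, so the statement should be read as an equality of (cardinal) dimensions; I would remark on this briefly rather than belabour it.
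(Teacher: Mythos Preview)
Your first part (computing $(\ind_H^I W,\ind_H^I W)_I$) matches the paper exactly. The gap is in the second part, at the step ``applying Frobenius reciprocity once more to each summand.'' You want
\[
(W,\ind_{H\cap xHx^{-1}}^{H}\,{}^xW)_H=(W,{}^xW)_{H\cap xHx^{-1}},
\]
but the Frobenius reciprocity you cite (induction is \emph{left} adjoint to restriction) gives $\Hom_H(\ind_K^H A,B)\cong\Hom_K(A,\res B)$, not $\Hom_H(B,\ind_K^H A)\cong\Hom_K(\res B,A)$. The latter would require induction to be \emph{right} adjoint to restriction, which holds only when induction coincides with coinduction---for instance when $[H:H\cap xHx^{-1}]$ is finite---and that index may well be infinite here. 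Equivalently, you are implicitly using $(X,Y)_S=(Y,X)_S$ twice (once at $S=H$ to flip before applying the correct Frobenius, once at $S=H\cap xHx^{-1}$ to flip back), and this symmetry fails for infinite-dimensional modules. The paper's Note immediately preceding its proof flags exactly this obstacle.

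The paper circumvents it by not packaging each double-coset piece as an induced module at all. Instead it writes $\res_H^G\ind_H^G W=\bigoplus_{x\in E}U_x$ with $U_x=\bigoplus_{y\in T_x} yxW$ explicitly, and argues directly that $(W,U_x)_H=0$ when $x\notin I$: since $y\in H\subseteq I$, every $yx\notin I$, so the hypothesis gives $(W,yxW)_{H\cap yxH(yx)^{-1}}=0$ for each summand, and one checks that this forces any $H$-map $W\to U_x$ to vanish. The commutation of $\Hom_H(W,-)$ with the outer direct sum is justified because $W$, being irreducible, is cyclic. To repair your argument you would need either to assume each $[H:H\cap xHx^{-1}]$ is finite (not available here) or to replace the second Frobenius step by this direct analysis.
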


\begin{note}{\rm The first equality is trivial. Under more restrictive hypothesis
the second equality would be a routine application of Frobenius
reciprocity, twice, with Mackey Decomposition Theorem used in
between. However, $(X,Y)_S$ need not equal $(Y,X)_S$, in general,
and this property would be required twice in that argument, first
for $S=H$ and then for $S=H\cap xHx^{-1}$. The second use could be
avoided if we modified our hypotheses, but the first could not.

The alternative argument presented below, which goes inside of the
proof of the Mackey Decomposition Theorem, will suffice for our
purposes.}
\end{note}

\begin{proof} Since $H\unlhd I$ and $I$ stabilizes $W$, we see
that $\res_H^I\ind_H^I W$ is the direct sum of $[I:H]$ copies of
$W$, so by Frobenius reciprocity
$$
(\ind_H^I W,\ind_H^I W)_I=(W,\res_H^I\ind_H^I W)_H=[I:H].
$$

Let $E$ be a system of representatives for the $(H,H)$-double
cosets in $G$. Since $H\unlhd I$, we may assume that $E$ contains
a system, say $E_0$, of representatives for $I/H$. Moreover, for
each $x\in E$, let $T_x$ be a system of representatives,
including~1, for the left cosets of $H\cap xHx^{-1}$ in $H$. Then
$$
S=\{yx\,|\, x\in E,y\in T_x\}
$$
is a set of representatives for the left cosets of $H$ in $G$. From
$$
\ind_H^G W=\underset{g\in S}\bigoplus\, gW,
$$
we obtain
$$
\res_H^G\ind_H^G W=\underset{x\in E}\bigoplus\, U_x,
$$
where $U_x$ is the $H$-submodule of $\ind_H^G W$ given by
$$
U_x=\underset{y\in T_x}\bigoplus yx W.
$$
Suppose first $x\in E_0$. Then $T_x=\{1\}$ and $U_x=xW\cong W$, so
$$
(W,U_x)_H=1.
$$
Suppose next $x\in E\setminus E_0$. Then $yx\notin I$ for any
$y\in T_x$. Therefore
$$
(W,yxW)_{H\cap yxH(yx)^{-1}}=0,\quad y\in T_x,
$$
and fortiori
$$
(W,U_x)_H=0.
$$
Since $W$ is $H$-irreducible, and hence generated by a single
element as $H$-module, it follows that $\Hom_H(W,-)$ commutes with
the direct sum of $H$-modules. Thus, by above
$$
(W,\res_H^G\ind_H^G W)_H=[I:H],
$$
so, by Frobenius reciprocity,
$$
(\ind_H^G W,\ind_H^G W)_G=[I:H].
$$
\end{proof}

\begin{prop}\label{compl} Let $S\unlhd T$ be groups, where $[T:S]$ is finite and $\chr(F)\nmid [T:S]$.
Let $W$ be a completely reducible $S$-module.
Then $\ind_S^T W$ is a completely reducible $T$-module.
\end{prop}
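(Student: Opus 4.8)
The plan is to deduce this from the averaging-idempotent argument that underlies Maschke's theorem, applied not inside the group algebra $FS$ but to $S$-module maps. First I would fix notation: let $W$ be a completely reducible $S$-module, $V=\ind_S^T W$, and let $n=[T:S]$, which is invertible in $F$ by hypothesis. Choose a transversal $t_1,\dots,t_n$ for $S$ in $T$, so that $V=\bigoplus_{i} t_i\otimes W$ as an $F$-vector space. The key point is that to prove $V$ is completely reducible it suffices to show every $T$-submodule $U$ of $V$ has a $T$-module complement; equivalently, that every surjection of $T$-modules $V\twoheadrightarrow V/U$ splits as a map of $T$-modules.

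The main step is an averaging construction. Given a $T$-submodule $U\le V$, I would first produce an $S$-module projection $\pi_0\colon V\to U$ (with $\pi_0|_U=\mathrm{id}_U$): this exists because $\res_S^T V$ is a completely reducible $S$-module — indeed $\res_S^T\ind_S^T W\cong\bigoplus_i {}^{t_i}(\,\cdot\,)$-twists of $W$, each of which is completely reducible since $W$ is and twisting by an automorphism of $S$ preserves complete reducibility, and $U$ itself is an $S$-submodule of this, hence a direct summand. Then I average: define
$$
\pi=\frac{1}{n}\sum_{i=1}^{n} t_i\,\pi_0\,t_i^{-1}.
$$
A routine check shows $\pi$ is independent of the choice of transversal (replacing $t_i$ by $s_i t_i$ with $s_i\in S$ uses that $\pi_0$ is an $S$-map and $U$ is a $T$-submodule, so $t_i$-conjugates land back in $\mathrm{End}$ appropriately), hence $\pi$ commutes with the $T$-action, i.e. $\pi\in\End_T(V)$. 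Since $\pi_0$ restricts to the identity on $U$ and $U$ is $T$-stable, each $t_i\pi_0 t_i^{-1}$ restricts to the identity on $U$ as well, so $\pi|_U=\mathrm{id}_U$; and $\pi(V)\subseteq U$ because each summand maps into the $T$-stable $U$. Therefore $\pi$ is a $T$-module projection onto $U$, and $V=U\oplus\ke(\pi)$ is a decomposition of $T$-modules. Applying this to an arbitrary $T$-submodule and iterating (using that $V$ has finite length over $T$ — it is a sum of $n$ $S$-summands, each of finite length since $W$ is completely reducible of finite length... or, more carefully, one works submodule-by-submodule without needing finite length, since the splitting of every short exact sequence already characterizes semisimplicity) gives complete reducibility of $V$.

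The step I expect to be the main obstacle is the well-definedness and $T$-equivariance of the averaged projection $\pi$ — specifically checking that $t_i\pi_0 t_i^{-1}$ genuinely depends only on the coset $t_iS$, which requires using both that $\pi_0$ is $S$-linear and that $U$ is invariant under all of $T$ (not just $S$). A secondary subtlety is making sure the argument does not secretly require $W$ itself to have finite length: one handles this by phrasing complete reducibility as "every submodule is a direct summand" rather than as an explicit decomposition into simples, so that the averaging argument applies verbatim to each submodule of $V$ one at a time. Everything else — the structure of $\res_S^T\ind_S^T W$, invertibility of $n$, conjugation of module homomorphisms — is formal.
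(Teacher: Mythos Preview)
Your proof is correct and follows essentially the same route as the paper. The paper first reduces to $W$ irreducible (since induction commutes with arbitrary direct sums), observes that $\res_S^T\ind_S^T W$ is then a direct sum of conjugates of the irreducible $S$-module $W$ and hence completely reducible over $S$, and then cites the sharpened Maschke theorem from \cite[\S 10.20, Problem 8]{CR} to pass from complete reducibility over $S$ to complete reducibility over $T$. Your argument supplies exactly the averaging proof of that cited result and avoids the preliminary reduction by noting directly that twists of a completely reducible module remain completely reducible; the two proofs are otherwise the same idea.
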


\begin{proof} We have $W=\underset{x\in X}\bigoplus W_x$ with $W_x$ irreducible,
so $\ind_S^T W\cong \underset{x\in X}\bigoplus\ind_S^T W_x$. We
may thus restrict to the case when $W$ itself is irreducible.
Since $\res_S^T \ind_S^T W$ is the direct sum of conjugates of the
irreducible $S$-module~$W$, it follows that $\res_S^T \ind_S^T W$
is a completely reducible $S$-module. Since $[T:S]$
is finite and $\chr(F)\nmid [T:S]$, a sharpened version of
Maschke's theorem (cf. \cite[\S 10.20, Problem 8]{CR}) ensures
that $\ind_S^T W$ is a completely reducible $T$-module.
\end{proof}

\begin{definition}\label{asc} Let $T$ be a group. We say that a subgroup $S$ of $T$ is \emph{strongly ascendant} (relative to $\chr(F)$) if there is well-ordered
set $(X,\leq)$ (not to be confused with the order used in
\S\ref{mc}), with first element $x_0$ and last element $x_1$, as
well as subgroups $S_x$, $x\in X$, of $T$ such that: $S_{x_0}=S$;
$S_{x_1}=T$; $S_x$ is normal of finite index in $S_{x'}$, with
$\chr(F)\nmid [S_{x'}:S_x]$, for every $x\in X$, $x\neq x_1$, with successor
$x'\in X$; if $x\in X$, $x\neq x_0$, and $x$ has no immediate
predecessor in $X$, then $S_x=\underset{y<x}\cup S_y$.
\end{definition}

\begin{theorem}\label{fund2} Keep all of the hypotheses of Theorem
\ref{fund1} and assume, in addition, that $[I:H]$ is finite and
not divisible by $\chr(F)$. Then

(a) $\ind_H^I W$ is a completely reducible $I$-module of finite
length $\leq [I:H]$ and
$$
\ind_H^I W=m_1V_1\oplus\cdots\oplus m_t V_t,
$$
where $\{V_1,\dots,V_t\}$ is a full set of representatives for the
isomorphism classes of irreducible $I$-modules lying over $W$. Moreover,
each $m_i$ satisfies
$$
d(i)=(W,\res_H^I V_i)_H=m_i (V_i,V_i)_I,
$$
where $d(i)$ is the (finite) length of the homogeneous $H$-module $\res_H^I V_i$, that is,
$$
\res_H^I V_i\cong W\oplus\cdots\oplus W\; (d(i) \text{ summands}\,).
$$

(b) We have
$$
\ind_H^G W\cong m_1\ind_I^G V_1\oplus\cdots\oplus m_t \ind_I^G
V_t,
$$
where
$$
(\ind_I^G V_i, \ind_I^G V_i)_G=(V_i,V_i)_I,\quad 1\leq i\leq t,
$$
$$
(\ind_I^G V_i, \ind_I^G V_j)_G=0,\quad 1\leq i\neq j\leq
t,
$$
$$
(\ind_I^G V_i,\ind_H^G W)=m_i (V_i,V_i)_I=(W,\res_H^I V_i)_H=d(i),\quad
1\leq i\leq t.
$$

(c) Suppose that at least one of the following conditions
holds:

{\rm (C1)} There is a normal subgroup $N$ of $G$ contained in $H$
and an irreducible $N$-module $W_0$ such that $W$ lies over $W_0$
and $I=I_G(W_0)$.

{\rm (C2)} $G$ is finite, $\chr(F)\nmid |G|$ and $(V_i,V_i)_I=1$
for all $1\leq i\leq t$.

{\rm (C3)} $I$ is a strongly ascendant subgroup of $G$ and
$(V_i,V_i)_I=1$ for all $1\leq i\leq t$.

Then $\ind_I^G V_1,\dots,\ind_I^G V_t$ are irreducible
$G$-modules. In particular, $\ind_H^G W$ is a completely reducible $G$-module.
\end{theorem}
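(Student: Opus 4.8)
The plan is to build on Theorem \ref{fund1} and Theorem \ref{fund2}(a)--(b). From (a) and (b) we already know
$$
\ind_H^G W\cong m_1\ind_I^G V_1\oplus\cdots\oplus m_t\ind_I^G V_t,
$$
with $(\ind_I^G V_i,\ind_I^G V_i)_G=(V_i,V_i)_I$ and $(\ind_I^G V_i,\ind_I^G V_j)_G=0$ for $i\neq j$. Hence once we know each $(V_i,V_i)_I=1$, the self-hom of each $\ind_I^G V_i$ is $1$; the remaining (and real) work is to upgrade ``self-hom equals $1$'' to genuine \emph{irreducibility} of $\ind_I^G V_i$ as a $G$-module. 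I would treat the three cases (C1), (C2), (C3) separately, each supplying the missing complete-reducibility input.

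\textbf{Case (C1).} Here $N\unlhd G$, $N\le H$, $W$ lies over an irreducible $N$-module $W_0$, and $I=I_G(W_0)$. Each $V_i$ lies over $W$ and hence over $W_0$, so the classical Clifford correspondence applies: I would invoke Theorem \ref{cl2} (with the normal pair $N\unlhd G$ and the irreducible $N$-module $W_0$, whose inertia group is exactly $I$) to conclude that $\ind_I^G V_i$ is irreducible, since $V_i$ is an irreducible $I$-module lying over $W_0$ and $S\mapsto\ind_I^G S$ is the stated bijection between irreducible $I$-modules over $W_0$ and irreducible $G$-modules over $W_0$. This case needs no finiteness and no characteristic hypothesis; it is the cleanest.

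\textbf{Cases (C2) and (C3).} These are the substantive ones, and the common strategy is: first establish that $\ind_H^G W$ is a \emph{completely reducible} $G$-module, and then combine that with the self-hom computation. Indeed, if $\ind_H^G W$ is completely reducible and $(\ind_I^G V_i,\ind_I^G V_i)_G=1$ with the $\ind_I^G V_i$ pairwise non-homomorphic, then each $\ind_I^G V_i$, being a completely reducible module (a summand of $\ind_H^G W$, up to the multiplicity $m_i$) with one-dimensional endomorphism algebra, must be irreducible. So the heart of the matter is complete reducibility of $\ind_H^G W$. For (C2): since $G$ is finite and $\chr(F)\nmid|G|$, Maschke's theorem makes every $FG$-module completely reducible, in particular $\ind_H^G W$; done. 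For (C3): $I$ is strongly ascendant in $G$, say via the well-ordered chain $H=S_{x_0}\le\cdots$... wait, more precisely via $I=S_{x_0},\,S_{x_1}=G$ with the normality/finite-index/characteristic conditions of Definition \ref{asc}. We already know $\ind_H^I W$ is completely reducible as an $I$-module by part (a). I would then run a transfinite induction along the chain $(S_x)_{x\in X}$: at a successor step, apply Proposition \ref{compl} to $S_x\unlhd S_{x'}$ (finite index prime to $\chr(F)$) to push complete reducibility from $S_x$-modules to $S_{x'}$-modules (noting $\ind_{S_x}^{S_{x'}}$ of the relevant module is the relevant module for the larger group); at a limit step $S_x=\bigcup_{y<x}S_y$, a module completely reducible over every $S_y$, $y<x$, is completely reducible over $S_x$, because an $S_x$-submodule is a union of its intersections with an increasing chain and any element lies in some $S_y$; I would cite the analogous limit-step argument used elsewhere (as in \cite{Sz}) rather than belabor it. Reaching $x_1$ gives $\ind_H^G W$ completely reducible over $G$.

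\textbf{Main obstacle.} The delicate point is the limit-ordinal step in (C3): one must argue that being completely reducible over every proper initial segment $S_y$ forces complete reducibility over the union $S_x$. The subtlety is that complete reducibility is not obviously preserved under directed unions of groups in general, so the argument must use that $W$ (hence $\ind_H^G W$) is generated, as a module over each $S_y$, in a controlled way, and that socles behave well along the chain; this is exactly the ``ascendant subgroup'' technique of Meierfrankenfeld and Wehrfritz alluded to in the introduction, and I would lean on the version already packaged in \cite{Sz}. Everything else — the Clifford-correspondence step in (C1), the Maschke step in (C2), and the successor steps in (C3) — is routine given the results already available in the excerpt.
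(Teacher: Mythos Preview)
Your treatment of (C1) and (C2) is correct and matches the paper's proof essentially verbatim: Clifford correspondence (Theorem~\ref{cl2}) for (C1), and Maschke plus the self-hom computation from part~(b) for (C2).

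Your approach to (C3), however, has a genuine gap at the limit step. You propose to prove by transfinite induction that $\ind_H^{S_x} W$ is \emph{completely reducible} as an $S_x$-module, and at a limit ordinal you assert that ``a module completely reducible over every $S_y$, $y<x$, is completely reducible over $S_x$.'' You yourself flag this as the main obstacle, and rightly so: this implication is not available. An $S_x$-submodule has $S_y$-complements for each $y<x$, but there is no mechanism to make these compatible, and the paper's own Note immediately following Theorem~\ref{fund2} gives an explicit example (the regular module of an infinite direct product) where induction from a strongly ascendant subgroup of a completely reducible module is \emph{not} completely reducible. That example violates the Mackey hypothesis of Theorem~\ref{fund1}, so it does not contradict the theorem, but it does show that your bare limit-step claim is false in general, and hence cannot be what \cite{Sz} packages.

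The paper sidesteps this entirely by inducting on a stronger statement: for each fixed $i$, the module $\ind_I^{S_x} V_i$ is \emph{irreducible}. The successor step is as you describe (Proposition~\ref{compl} gives complete reducibility, then $(\ind_I^{S_x}V_i,\ind_I^{S_x}V_i)_{S_x}=1$ from part~(b) forces irreducibility). The limit step is now elementary: choose a transversal $Z$ for $I$ in $S_x$ compatible with each $S_y$; any two elements $u\neq 0$ and $v$ of $\ind_I^{S_x} V_i=\bigoplus_{z\in Z} zV_i$ have finite support in $Z$, hence lie in $\ind_I^{S_y} V_i$ for some $y<x$, which is irreducible by hypothesis, so $v\in FS_y\cdot u\subseteq FS_x\cdot u$. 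Irreducibility of each $\ind_I^G V_i$ then gives complete reducibility of $\ind_H^G W$ as a consequence, not as the inductive hypothesis. The moral: induct on irreducibility of the pieces, not on complete reducibility of the whole.
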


\begin{proof} We know from Proposition \ref{compl} that $\ind_H^I W$ is a completely reducible $I$-module.
Since $\res_H^I \ind_H^I W$ is the direct sum of $[I:H]$
conjugates of the irreducible $H$-module $W$, we see that
$\res_H^I \ind_H^I W$ has finite length $[I:H]$, whence $\ind_H^I
W$ has finite length $\leq [I:H]$. Thus
\begin{equation}
\label{tos} \ind_H^I W=m_1V_1\oplus\cdots\oplus m_t V_t,
\end{equation}
where $V_1,\dots,V_t$ are non-isomorphic irreducible $I$-modules
and each $m_i\geq 1$. Projecting $\ind_H^I W$ onto each component,
we see that
$$
(\ind_H^I W,V_i)_I\neq 0,\quad 1\leq i\leq t,
$$
so by Frobenius reciprocity
$$
(W,\res^I_H V_i)_H\neq 0,\quad 1\leq i\leq t.
$$
Therefore each $V_i$ lies over $W$. Conversely, if $V$ is an
irreducible $I$-module lying over $W$ then
$$
(W, \res^I_H V)_H\neq 0,
$$
so
$$
(\ind_H^I W,V)_I\neq 0
$$
by Frobenius reciprocity. Hence (\ref{tos}) implies the existence
of one (and only one) $i$ such that $1\leq i\leq t$ and
$$
(V_i,V)_I\neq 0.
$$
Since $V_i$ and $V$ are irreducible $I$-modules, Schur's Lemma
yields $V_i\cong V$. Thus $\{V_1,\dots,V_t\}$ is a full set of
representatives for the isomorphism classes of irreducible
$I$-modules lying over $W$. Frobenius reciprocity and (\ref{tos})
now give
\begin{equation}
\label{uva} (W,\res_H^I V_i)_H=(\ind_H^I W,V_i)_I=(m_i
V_i,V_i)_I=m_i(V_i,V_i)_I,\quad 1\leq i\leq t.
\end{equation}
Since $H\unlhd I$, with $[I:H]$ finite, and $W$ is an irreducible
submodule of $\res_H^I V_i$, Clifford's theory ensures that
$\res_H^I V_i$ is the direct sum $d(i)$ conjugates of $W$, where
$d(i)$ is the (finite) length of $\res_H^I V_i$ as an $H$-module.
But $W$ is $I$-invariant, so $\res_H^I V_i$ is the direct sum of
$d(i)$ copies of $W$. Thus
\begin{equation}
\label{melon} (W,\res_H^I V_i)_H=d(i),\quad 1\leq i\leq t.
\end{equation}

Let $S$ be any subgroup of $G$ containing $I$. From (\ref{tos}) we obtain

\begin{equation}\label{tos2}
\begin{aligned}
 \ind_H^S W &\cong\ind_I^S \ind_H^I W\\&\cong
\ind_I^S(m_1V_1\oplus\cdots\oplus m_t V_t)\\& \cong m_1\ind_I^S
V_1\oplus\cdots\oplus m_t \ind_I^S V_t.
\end{aligned}
\end{equation}
From Theorem \ref{fund1}, (\ref{tos}) and Schur's Lemma we get
\begin{equation}
\label{tos3} [I:H]=m_1^2(V_1,V_1)_I+\cdots+m_t^2(V_t,V_t)_I.
\end{equation}
Setting $V_i'=\ind_I^S V_i$ for $1\leq i\leq t$, Theorem
\ref{fund1} and (\ref{tos2}) yield
\begin{equation}
\label{tos4}
[I:H]=m_1^2(V_1',V_1')_S+\cdots+m_t^2(V_t',V_t')_S+\underset{i\neq
j}\sum m_im_j(V_i',V_j')_S.
\end{equation}
Now, by Frobenius reciprocity
$$
(V_i',V_i')_S=(V_i,\res^S_I V_i')_I,\quad
1\leq i\leq t.
$$
Since $V_i$ is an $I$-submodule of $\res^S_I V_i'$, it
follows that
\begin{equation}
\label{tos5} (V_i',V_i')_S\geq (V_i,V_i)_I,\quad
1\leq i\leq t.
\end{equation}
Combining (\ref{tos3}), (\ref{tos4}) and (\ref{tos5}) we infer
\begin{equation}
\label{tos6} (V_i',V_i')_S=(V_i,V_i)_I\text{ and
}(V_i',V_j')_S=0,\quad 1\leq i\neq j\leq t.
\end{equation}
Making use of (\ref{tos2}) and (\ref{tos6}) yields
\begin{equation}
\label{tos7} (V_i', \ind_H^S W)_S=(V_i', m_i
V_i')_S=m_i(V_i',V_i')_S=m_i(V_i,V_i)_I,\quad 1\leq i\leq t.
\end{equation}
Thus (\ref{uva}), (\ref{melon}) and (\ref{tos7}) give
$$
(\ind_I^S V_i, \ind_H^S W)_S=(W,\res_H^I V_i)_H=d(i), \quad 1\leq i\leq t.
$$

Suppose (C1) holds. Since each $V_i$ is an irreducible $I$-module
lying over the irreducible $N$-module $W_0$ and $I=I_G(W_0)$, it
follows from Clifford's Correspondence (cf. Theorem \ref{cl2})
that each $\ind_I^G V_i$ is an irreducible $G$-module.

Suppose (C2) holds.  Then, by Maschke's theorem, each $\ind_I^G
V_i$ is a completely reducible $G$-module. Moreover, by
(\ref{tos6}), we have  $(\ind_I^G V_i,\ind_I^G V_i)_G=1$, so each
$\ind_I^G V_i$ is irreducible.

Suppose (C3) holds. Let $(X,\leq)$ and $(S_x)_{x\in X}$ be as in
Definition \ref{asc} and set $V=V_i$, where $1\leq i\leq t$. Then
$\ind_I^{S_{x_0}} V=V$ is irreducible. Let $x_0<x$ and suppose
$\ind_I^{S_y} V$ is irreducible for every $y<x$.

\noindent{Case 1.} $x$ has an immediate predecessor $y$. Since
$$
\ind_I^{S_x} V\cong \ind_{S_{y}}^{S_x} \ind_I^{S_{y}} V,\quad \quad 1\leq i\leq t,
$$
it follows from Proposition \ref{compl} that $\ind_I^{S_x} V$ is a completely reducible $S_x$-module.
But, by above, $(\ind_I^{S_{x}} V,\ind_I^{S_{x}} V)_{S_{x}}=1$, so $\ind_I^{S_x} V$ is irreducible.

\noindent{Case 2.} $x$ has no immediate predecessor. We readily
see that there is a system of representatives $Z$ for the left
cosets of $I$ in $S_x$ such that $Z\cap S_y$ is a system of
representatives for the left cosets of $I$ in~$S_y$ for each
$y\leq x$. We have
$$
\ind_I^{S_x} V=\underset{z\in Z}\bigoplus zV.
$$
Let $u$ be an arbitrary non-zero element of $\ind_I^{S_x} V$ and let $v\in \ind_I^{S_x} V$. Then there is a finite subset $\hat{Z}$ of $Z$ such that
$$
u,v\in \underset{z\in \hat{Z}}\bigoplus zV.
$$
Since $S_x=\underset{y<x}\cup S_y$, there is $y<x$ such that $\hat{Z}\subseteq S_y$. Then
$$
u,v\in \underset{z\in S_y\cap Z}\bigoplus zV\cong\ind_I^{S_y} V.
$$
Since $\ind_I^{S_y} V$ is irreducible, $v\in FS_y\cdot u$, whence $v\in FS_x\cdot u$. This proves that $\ind_I^{S_x} V$ is irreducible.

By transfinite induction, $\ind_I^{S_x} V$ is irreducible for
every $x\in X$. Since $G=S_{x_1}$, the proof is complete.
\end{proof}

\begin{note}{\rm It is false, in general, that if $S$ is a
strongly ascendant subgroup of $T$ and $W$ is an irreducible
$S$-module then $\ind_S^T W$ is completely reducible. Indeed, let
$T$ be the direct product of countably many copies of any finite
non-trivial group $P$ and let $S$ be the trivial subgroup of $T$
with trivial $S$-module $W$. Suppose $\chr(F)\nmid |P|$. Then
$\ind_S^{S_i} W$ is completely reducible for every $i$ (where
$S_i$ is the direct product of $i$ copies of $P$), but $V=\ind_S^T
W$ is the regular module of the infinite group~$T$ and hence is
not completely reducible (the epimorphism $V\to F$ shows that a
supposed complement to the augmentation ideal must be trivial, but
$V$ has no trivial submodule).}
\end{note}

As a consequence of Theorem \ref{fund2} we obtain the following extension of a
well-known irreducibility criterion due to Mackey \cite{Ma}, originally proved in the context
of finite groups and finite dimensional modules over an algebraically closed field.

\begin{theorem}\label{mage} Let $H\leq G$ be groups and
let $W$ be an irreducible $H$-module satisfying
$$
(W,W)_H=1
$$
and
$$
(W,{}^x W )_{H\cap xHx^{-1}}=0,\quad x\in G\setminus H.
$$
Suppose at least one of the following conditions hold:

{\rm (D1)} There is a normal subgroup $N$ of $G$ contained in $H$
and an irreducible $N$-module $W_0$ such that $W$ lies over $W_0$
and $ H=I_G(W_0)$ (this is automatic if $H\unlhd G$).

{\rm (D2)} $G$ is finite and $\chr(F)\nmid |G|$.

{\rm (D3)} $H$ is a strongly ascendant subgroup of $G$.

Then $\ind_H^G W$ is an irreducible $G$-module and $(\ind_H^G W,\ind_H^G W)_G=1$.
\end{theorem}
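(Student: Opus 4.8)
The plan is to obtain Theorem~\ref{mage} as the special case $I=H$ of Theorem~\ref{fund2}. With this choice all the standing hypotheses of Theorem~\ref{fund2} hold automatically: $H\unlhd I$ is trivial, $[I:H]=1$ is finite and not divisible by $\chr(F)$, and $W$ is $I$-stable; moreover the two displayed conditions imported from Theorem~\ref{fund1} read $(W,W)_H=1$ and $(W,{}^xW)_{H\cap xHx^{-1}}=0$ for $x\in G\setminus I=G\setminus H$, which are precisely the hypotheses we are given. Hence Theorem~\ref{fund2} applies with no extra work.

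Reading off parts (a) and (b) in this situation: since $\ind_H^I W=W$ is already irreducible, the decomposition $\ind_H^I W=m_1V_1\oplus\cdots\oplus m_tV_t$ collapses to $t=1$, $m_1=1$ and $V_1\cong W$, with $(V_1,V_1)_I=(W,W)_H=1$. Then part (b) gives $\ind_H^G W\cong\ind_I^G V_1$ and $(\ind_H^G W,\ind_H^G W)_G=(\ind_I^G V_1,\ind_I^G V_1)_G=(V_1,V_1)_I=1$, which is the inner-product assertion of Theorem~\ref{mage}.

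Irreducibility of $\ind_H^G W$ comes from part (c). The observation to make is that, once $I=H$, the side condition $(V_i,V_i)_I=1$ that appears in (C2) and (C3) is automatic (it is just $(V_1,V_1)_I=1$, already noted), so (C2) reduces to (D2) and (C3) to (D3), while (C1) is literally (D1). Therefore, under any one of (D1)--(D3), Theorem~\ref{fund2}(c) tells us that $\ind_I^G V_1=\ind_H^G W$ is an irreducible $G$-module, and the proof is complete. For the parenthetical remark that (D1) is automatic when $H\unlhd G$, one takes $N=H$ and $W_0=W$: then $W$ lies over $W_0$ trivially, $H\subseteq I_G(W)$ always (conjugation by an element of $H$ is an inner automorphism of $H$), and, since ${}^xW=W^{x^{-1}}$ for $x\in G\setminus H$, the hypothesis $(W,{}^xW)_{H\cap xHx^{-1}}=0$ forces $W\not\cong W^{x^{-1}}$ for all such $x$, hence $I_G(W)\subseteq H$; so $H=I_G(W_0)$.

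I do not expect a substantive obstacle: all the real content has been packaged into Theorems~\ref{fund1} and~\ref{fund2}, and the only thing that needs care is the bookkeeping of matching the notations ${}^xW$ and $W^g$ and of confirming that the hypothesis lists of the two theorems coincide when $I=H$. The one spot I would write out in full is the claim that (C1)--(C3) specialize to (D1)--(D3), precisely because the vanishing of the endomorphism-ring side conditions in (C2)/(C3) and the $H\unlhd G$ clause of (D1) are easy to pass over too quickly.
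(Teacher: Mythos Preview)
Your proposal is correct and is exactly the paper's approach: the paper does not give a separate proof of Theorem~\ref{mage} but simply introduces it with ``As a consequence of Theorem~\ref{fund2} we obtain\ldots'', i.e., the special case $I=H$. Your careful bookkeeping (including the verification of the parenthetical remark about $H\unlhd G$) fills in precisely the details the paper leaves implicit.
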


\section{Preparation for Mackey theory}\label{prema}

\begin{notation}{\rm Let
$$
\Omega=\{(\al,\ga)\in\Phi\,|\,\exists\, (\al,\be),(\ga,\de)\in D\text{ such that
} \al<\ga<\be<\de\},
$$
$$
\Gamma_1=\Gamma\cup\Omega\text{ (disjoint union)}.
$$
}
\end{notation}

\begin{note}{\rm Since $D$ is always finite, so is the subset $\Omega$ of $\Phi$.}
\end{note}

\begin{lemma}\label{clonor} $\Gamma_1$ is a closed subset of
$\Phi$.
\end{lemma}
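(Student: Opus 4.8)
The claim is that $\Gamma_1 = \Gamma \cup \Omega$ is closed, i.e., whenever $(\al,\be), (\be,\ga) \in \Gamma_1$, we must have $(\al,\ga) \in \Gamma_1$. Since $\Gamma_1$ is a disjoint union of two sets, I would proceed by a case analysis on where the two hypothesized pairs lie. Recall that $\Gamma = \Phi \setminus \bigcup_i [\al_i,\rightarrow,\be_i)$, so $(\sigma,\tau) \in \Gamma$ means precisely: for every $i$, it is \emph{not} the case that $\sigma = \al_i$ and $\tau < \be_i$; equivalently, for each $i$ with $\sigma = \al_i$ we have $\tau \geq \be_i$. And $\Omega$ consists of the finitely many pairs $(\al,\ga)$ for which there exist $(\al,\be),(\ga,\de) \in D$ with $\al < \ga < \be < \de$.

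\textbf{The four cases.} The easiest is when both pairs lie in $\Gamma$: since $\Gamma$ was already shown (in \S\ref{mc}) to be closed — indeed $\Gamma$ is an intersection of the closed sets $\Phi \setminus [\al_i,\rightarrow,\be_i)$, each of which is closed because the complement of $[\al,\rightarrow,\be)$ in $\Phi$ is closed — we get $(\al,\ga) \in \Gamma \subseteq \Gamma_1$. Next, suppose $(\al,\be) \in \Omega$ and $(\be,\ga) \in \Gamma$ (or the symmetric situation). From $(\al,\be) \in \Omega$ we extract $(\al, b),(\be, d) \in D$ with $\al < \be < b < d$; here I am using the element of $\Omega$ to name an interval of $D$ starting at $\al$ and one starting at $\be$. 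Now I would try to show $(\al,\ga) \in \Omega$ by exhibiting the required pair starting at $\ga$: I want some $(\ga, e) \in D$ with $\al < \ga < b < e$. Since $(\be,\ga) \in \Gamma$ and $(\be, d) \in D$ — so $\be = \al_j$ for some $j$ — the defining property of $\Gamma$ forces $\ga \geq d$; combined with $b < d \leq \ga$ this is awkward because we actually want $\ga < b$. So this naive attempt fails, and the correct conclusion in this case is presumably that $(\al,\ga) \in \Gamma$ instead: I would verify that for every $i$ with $\al = \al_i$, we have $\ga \geq \be_i$. Indeed $\al = \al_i$ together with the basic-set property ($\al_i$ distinct) pins down $\al = \al_j$ where $(\al, b) \in D$, giving $\be_i = b$; then $\ga \geq d > b = \be_i$, as needed. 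The remaining cases ($(\al,\be) \in \Gamma$, $(\be,\ga) \in \Omega$) and ($(\al,\be),(\be,\ga)$ both in $\Omega$) I would handle by the same bookkeeping: unwind the $\Omega$-memberships into intervals of $D$, use that $D$ is basic so the indices $\al_i$ (resp.\ $\be_i$) are all distinct, and check the defining inequalities for either $\Gamma$ or $\Omega$.

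\textbf{Main obstacle.} The real work is purely combinatorial: keeping straight which intervals of $D$ witness a given element of $\Omega$, and checking that the chained inequalities $\al < \be < \cdots$ from the two $\Omega$-memberships (or the $\Gamma$-membership) compose correctly to land $(\al,\ga)$ in the right piece. The subtle point I expect to be the crux is the ``both in $\Omega$'' case: there we have $(\al,b_1),(\be,d_1) \in D$ with $\al < \be < b_1 < d_1$ and $(\be,b_2),(\ga,d_2) \in D$ with $\be < \ga < b_2 < d_2$; since $D$ is basic and $\be$ appears as a left endpoint, $b_2 = d_1$, so we get the chain $\al < \be < \ga < b_1 < d_1 = b_2 < d_2$, and then $(\al, b_1) \in D$ together with $(\ga, d_2) \in D$ and $\al < \ga < b_1 < d_2$ witnesses $(\al,\ga) \in \Omega$. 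I would write out each case in a couple of lines in this style; there is no deeper difficulty, just the need to be careful that the ``basic'' hypothesis is invoked to identify endpoints whenever a vertex of $\Lambda$ recurs.
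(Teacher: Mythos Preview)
Your overall strategy---a four-way case split on whether each of $(\al,\be)$ and $(\be,\ga)$ lies in $\Gamma$ or in $\Omega$---is the right one, and indeed this is exactly the argument behind Andr\'{e}'s \cite[Proposition~2]{A3}, which the paper simply cites. Your Cases~1--3 are fine.

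There is, however, a genuine gap in your ``both in $\Omega$'' case. From $(\al,\be)\in\Omega$ you extract $(\al,b_1),(\be,d_1)\in D$ with $\al<\be<b_1<d_1$, and from $(\be,\ga)\in\Omega$ you extract $(\be,b_2),(\ga,d_2)\in D$ with $\be<\ga<b_2<d_2$; basicness gives $b_2=d_1$. You then assert the chain $\al<\be<\ga<b_1<d_1=b_2<d_2$. But nothing forces $\ga<b_1$: all you know is $\ga<d_1$ and $b_1<d_1$. A concrete counterexample is $D=\{(1,3),(2,6),(4,7)\}$, where $(1,2),(2,4)\in\Omega$ but $\ga=4>3=b_1$; here $(1,4)\notin\Omega$ at all.

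The fix is easy and stays within your framework: split on the position of $\ga$ relative to $b_1$. If $\ga<b_1$, your argument goes through and $(\al,\ga)\in\Omega$ via $(\al,b_1),(\ga,d_2)\in D$ and $\al<\ga<b_1<d_2$. If $\ga\geq b_1$, then since $(\al,b_1)\in D$ and $D$ is basic, the unique $i$ with $\al=\al_i$ has $\be_i=b_1\leq\ga$, so $(\al,\ga)\in\Gamma$. Either way $(\al,\ga)\in\Gamma_1$, and the proof is complete.
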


\begin{proof} This is essentially proven in \cite[Proposition 2]{A3}.
\end{proof}

\begin{notation}{\rm Let
$$
I=M(\Gamma_1).
$$
}
\end{notation}

\begin{theorem}\label{corin}


(a) $H$ is normal in $I$.


(b) $[I:H]=|R|^{|\Omega|}$. Thus, $I/H$ is a finite group provided $R$ is a finite ring.

(c) $I$ stabilizes $\lm$.

(d) Given any $g\in M\setminus I$ there is $h\in H\cap gHg^{-1}$
such that $\lm(h)\neq \lm(g^{-1}hg)$.

\end{theorem}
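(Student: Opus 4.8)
The plan is to handle the four assertions in turn, relying on the dictionary from Section~\ref{mc} between pattern subgroups and closed (resp.\ normal) subsets of $\Phi$, and on two elementary observations. First, $(\al,\ga)\in\Gamma$ precisely when $\al=\al_i$ forces $\ga\ge\be_i$, while $(\al,\ga)\in\Omega$ precisely when $\al=\al_i$, $\ga=\al_j$ and $\al_i<\al_j<\be_i<\be_j$. Second, for $(\al,\be)\in\Gamma$ the restriction of $\lm$ to $M_{\al\be}$ is $\lm_k$ when $(\al,\be)=(\al_k,\be_k)$ and is trivial otherwise; this is immediate from $\lm=\widehat{\lm_1}|_H\cdots\widehat{\lm_m}|_H$ and the definition of the $\widehat{\lm_k}$.

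For (a) it suffices, by the remark in Section~\ref{mc}, to check that $\Gamma$ is a normal subset of $\Gamma_1$. Since $\Gamma_1=\Gamma\cup\Omega$ (a disjoint union) and $\Gamma$ is itself closed, this reduces to showing that composing an element of $\Gamma$ with an element of $\Omega$, when composable, lands back in $\Gamma$; both relevant implications are one-line consequences of the displayed descriptions (for instance, if $(\al,\be)\in\Omega$ and $(\be,\ga)\in\Gamma$ then $\be=\al_j$, so $\ga\ge\be_j>\be_i$, which is exactly what $(\al,\ga)=(\al_i,\ga)\in\Gamma$ requires). Part (b) then follows from Proposition~\ref{trax}: $\Omega=\Gamma_1\setminus\Gamma$ is finite, so the transversal $T$ produced there is in bijection with the maps $\Omega\to R$, giving $[I:H]=|T|=|R|^{|\Omega|}$.

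For (c), observe that $I=M(\Gamma_1)$ is generated by $H$ together with the groups $M_{\al_i\al_j}$ for $(\al_i,\al_j)\in\Omega$, and that conjugation by $H$ fixes $\lm$ automatically; so I would only check that conjugating a generator $1+re_{\al\be}$ of $H$ (with $(\al,\be)\in\Gamma$) by $1+se_{\al_i\al_j}$ leaves $\lm$ unchanged. By the commutator formula the conjugate (which lies in $H$ by (a)) differs from $1+re_{\al\be}$ only when $\al=\al_j$ or $\be=\al_i$: in the first case an extra factor supported at $(\al_i,\be)$ appears, but $(\al,\be)\in\Gamma$ forces $\be\ge\be_j>\be_i$, so $(\al_i,\be)$ is not of the form $(\al_k,\be_k)$; in the second an extra factor supported at $(\al,\al_j)$ appears, but $(\al,\al_i)\in\Gamma$ forces $\al_j>\al_i\ge\be_k$ whenever $\al=\al_k$, so again $(\al,\al_j)$ is not of the form $(\al_k,\be_k)$. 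In either case the second observation above shows $\lm$ is unchanged.

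Part (d) is the crux. Given $g\in M\setminus I$, its support $S(g)$ (in the sense of~(\ref{geme})) is not contained in $\Gamma_1$, so it contains a position $(\al_i,\nu)$ with $\al_i<\nu<\be_i$ and $(\al_i,\nu)\notin\Omega$; among all such positions in $S(g)$ I would choose one with $\be_i$ as large as possible. Then $(\nu,\be_i)\in\Gamma$, so $1+te_{\nu\be_i}\in H$, and I would set $h=g(1+te_{\nu\be_i})g^{-1}$; thus $h\in gHg^{-1}$ and $g^{-1}hg=1+te_{\nu\be_i}$ with $\lm(g^{-1}hg)=1$. Expanding $h-1=g(te_{\nu\be_i})g^{-1}$ shows that its support lies among the positions $(\al,\de)$ with $\al\in\{\nu\}\cup\{\al':(\al',\nu)\in S(g)\}$ and $\de\in\{\be_i\}\cup\{\de':(\be_i,\de')\in S(g^{-1})\}$. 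Using the maximality of $\be_i$ together with $(\al_i,\nu)\notin\Omega$, a finite case analysis then shows that every such position lies in $\Gamma$ — so $h\in H$, whence $h\in H\cap gHg^{-1}$ — and that the only one of them of the form $(\al_k,\be_k)$ is $(\al_i,\be_i)$, whose coefficient in $h$ equals $r_{\al_i\nu}t\ne0$. Hence $\lm(h)=\lm_i(r_{\al_i\nu}t)$, and since $r_{\al_i\nu}\ne0$ the right primitivity of $\lm_i$ supplies $t\in R$ with $\lm_i(r_{\al_i\nu}t)\ne1$ (because $r_{\al_i\nu}R$ is a nonzero right ideal, hence not inside $\ker\lm_i$); for this $t$ we get $\lm(h)\ne1=\lm(g^{-1}hg)$, proving (d). The main obstacle is this last step: the maximal choice of the position $(\al_i,\nu)$ is exactly what is needed to keep $g$-conjugation inside $H$ and to ensure that $\lm$ detects only the single position $(\al_i,\be_i)$, and verifying these two points is a finite but slightly delicate bookkeeping about how $\Gamma$, $\Omega$, $S(g)$ and $S(g^{-1})$ interact.
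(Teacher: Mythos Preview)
Your proposal is correct and follows essentially the same approach as the paper. Parts (a)--(c) match the paper's arguments (the paper cites \cite{A3} for (a) and (d) rather than writing out the normality of $\Gamma$ in $\Gamma_1$, but your direct verification is exactly what underlies that citation). For (d), your strategy is identical to the paper's: pick a position $(\al_i,\nu)\in S(g)\setminus\Gamma_1$ with $\be_i$ maximal, set $h=g(1+te_{\nu\be_i})g^{-1}$, and use the maximality together with $(\al_i,\nu)\notin\Omega$ to show that the support of $h-1$ lies entirely in $\Gamma$ and meets $D$ only at $(\al_i,\be_i)$, so that $\lm(h)=\lm_i(r_{\al_i\nu}t)$ while $\lm(g^{-1}hg)=1$; right primitivity of $\lm_i$ then supplies the required $t$. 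The ``finite case analysis'' you allude to is precisely the five-type enumeration (i)--(v) the paper carries out, and your sketched reasoning covers all of those cases.
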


\begin{proof} (a) This is included in the proof of \cite[Proposition 2]{A3}.

(b) This follows from Proposition \ref{trax}.

(c) By Proposition \ref{co1} it suffices to show that, given $(\al,\ga)\in \Omega$ and $r\in R$,
the conjugate character of $\lm$ by $1+re_{\al\ga}$ equals $\lm$. By Proposition \ref{co1} this verification can be restricted
to
\begin{equation}
\label{veri}
\lm((1+re_{\al\ga})(1+se_{\pi\rho})(1-re_{\al\ga}))=\lm(1+se_{\pi\rho})
\end{equation}
for all $(\pi,\rho)\in \Gamma$ and $s\in R$. Since $(\al,\ga)\in \Omega$ there exist $(\al,\be),(\ga,\de)\in D$
such that
$$
\al<\ga<\be<\de.
$$
Two cases arise:

\noindent{\sc Case 1.} $\pi=\ga$. In this case
$$
(1+re_{\al\ga})(1+se_{\ga\rho})(1-re_{\al\ga})=(1+se_{\ga\rho})(1+rse_{\al\rho}),
$$
so we need to check that
$$
\lm(1+rse_{\al\rho})=1.
$$
This is automatically true of $(\al,\rho)\notin D$. But $(\al,\rho)$ cannot be in $D$, for in that case $\rho=\be$,
whence $(\ga,\be)\notin \Gamma$.

\noindent{\sc Case 2.} $\rho=\al$. In this case
$$
(1+re_{\al\ga})(1+se_{\pi\al})(1-re_{\al\ga})=(1+se_{\pi\al})(1-sre_{\pi\ga}),
$$
so we need to check that
$$
\lm(1-sre_{\pi\ga})=1.
$$
This is automatically true of $(\pi,\ga)\notin D$. But $(\pi,\ga)$ cannot be in $D$, for in that case
$(\pi,\al)\notin \Gamma$.

(d) This is essentially contained in the proof of \cite[Proposition 1]{A3}. However, due to the its critical role
and technical nature, we reproduce Andr\'{e}'s argument, suitably modified to our purposes.

Let $g\in M\setminus I$. Then $g=1+x$, where
$$
x=\underset{(\al,\be)\in\Phi}\sum x_{\al\be}e_{\al\be},
$$
where all $x_{\al\be}\in R$ and almost all of them are equal to 0. Since $g\notin I$, the set
$$
A=\{(\al,\be)\in \Phi\setminus\Gamma_1\,|\, x_{\al\be}\neq 0\}
$$
is non-empty. To any $(\al,\be)\in A$ there corresponds a unique $(\al,\ga)\in D$. Thus,
$$
D_1=\{(\al,\ga)\in D\,|\,\exists\; (\al,\be)\in \Phi\setminus\Gamma_1 \text{ such that }x_{\al\be}\neq 0\}
$$
is non-empty. Choose $(\al,\ga)\in D_1$ with $\ga$ as large as large as possible. This can be done, because $D_1$ is a finite,
totally ordered, non-empty set.

It follows that if $(\rho,\si)\in D$, $\ga<\si$, and $x_{\rho\tau}\neq 0$ for some $\rho<\tau<\si$ (so that $(\rho,\tau)\notin\Gamma$),
then necessarily $(\rho,\tau)\in\Omega$ (for otherwise $(\rho,\tau)\notin\Gamma_1$, against the choice of $(\al,\ga)$) (*).

For this choice of $(\al,\ga)\in D_1$ there exists $(\al,\be)\in \Phi\setminus\Gamma_1$ such that $x_{\al\be}\neq 0$.

We claim that for all $r\in R$, we have
$$
g(1+re_{\be\ga})g^{-1}\in H\cap gHg^{-1}.
$$
Indeed, since $(\al,\ga)\in D$ and $(\al,\be)\notin\Gamma$, we have $\al<\be<\ga$. Moreover, since $(\al,\be)\notin\Omega$,
for any $\de\in\Lambda$
\begin{equation}\label{bela}
\ga<\de\Rightarrow (\be,\de)\notin D.
\end{equation}
If $(\be,\ga)\notin\Gamma$ there would be $\de\in\Gamma$ such that $\be<\ga<\de$ and~$(\be,\de)\in D$, against (\ref{bela}).
This shows that $(\be,\ga)\in \Gamma$, so $1+re_{\be\ga}\in H$, whence $g(1+re_{\be\ga})g^{-1}\in gHg^{-1}$.

Next we show that
$$
g(1+re_{\be\ga})g^{-1}=(1+x)(1+re_{\be\ga})(1+x)^{-1}=(1+x)(1+re_{\be\ga})(1+y)\in H,
$$
where $(1+x)^{-1}=1+y$ for a unique $y\in J$ (the nil ring $J$ was defined in \S\ref{mc}). Now
\begin{equation}\label{piw}
(1+re_{\be\ga})(1+y)=1+y+re_{\be\ga}+re_{\be\ga}y.
\end{equation}
Since $(\be,\ga)\in \Gamma$, it follows that $(\be,\de)\in\Gamma$ for every $\de\in\Lambda$ such that $\ga\leq\de$.
Thus, the multiplication in $J$ implies
\begin{equation}\label{bela2}
1+re_{\be\ga}+re_{\be\ga}y\in H.
\end{equation}
Multiplying (\ref{piw}) on the left by $(1+x)$ we find
\begin{equation}\label{piw2}
g(1+re_{\be\ga})g^{-1}=1+re_{\be\ga}+re_{\be\ga}y+xre_{\be\ga}+xre_{\be\ga}y.
\end{equation}
Suppose, if possible, that for some $(\rho,\pi)\notin\Gamma$, the $(\rho,\pi)$ coefficient of $xre_{\be\ga}$ is not~0.
Then $\pi=\ga$ and $\rho<\be$, in which case the coefficient is $x_{\rho\be}r\neq 0$. Since $(\rho,\ga)=(\rho,\pi)\notin\Gamma$,
there exists $(\rho,\si)\in D$ with $\rho<\ga<\si$. As $x_{\rho\be}\neq 0$ and $\rho<\be<\ga<\si$, (*) implies $(\rho,\be)\in\Omega$.
Then there exists $(\be,\de)\in D$ such that $\rho<\be<\si<\de$. Since $\ga<\si$, we infer $\ga<\de$, against (\ref{bela}).
This proves that $xre_{\be\ga}$ is in the $R$-span of $\Gamma$. As above, this implies that $xre_{\be\ga}y$ is in the $R$-span of $\Gamma$.
Combining this with (\ref{bela2}) proves the claim.

We next claim that there is $r\in R$ such that $h_r=g(1+re_{\be\ga})g^{-1}\in H\cap gHg^{-1}$ satisfies $\lm(h_r)\neq \lm(g^{-1}h_r g)$.
Since $\al\neq \be$ and $(\al,\ga)\in D$, it follows that $(\be,\ga)\notin D$, whence
$$
\lm(g^{-1}h_r g)=\lm(1+re_{\be\ga})=1,\quad r\in R.
$$
Thus, we are reduced to showing the existence of $r\in R$ such that
\begin{equation}\label{piw3}
\lm(h_r)\neq 1.
\end{equation}
Now $(\al,\ga)\in D$ means $(\al,\ga)=(\al_i,\be_i)$ for a unique $1\leq i\leq m$. As $\lm_i$ is right primitive and $x_{\al\be}\neq 0$,
there is $r\in R$ such that $\lm_i(x_{\al\be}r)\neq 1$, that is,
\begin{equation}\label{piw4}
\lm(1+x_{\al\be}re_{\al\ga})\neq 1.
\end{equation}
We claim that (\ref{piw3}) holds for this choice of $r$. Indeed, when we express $h_r$, namely (\ref{piw2}), in the canonical form
(\ref{uni2}), the non-trivial factors must be of one of the following forms:

(i) $1+re_{\beta \gamma}$,

(ii) $1+ry_{\gamma \delta}e_{\beta \delta}$, with $\gamma<\delta$,

(iii) $1 + x_{\alpha \beta}r e_{\alpha \gamma}$,

(iv) $1 + x_{\rho \beta} r e_{\rho \gamma}$, with $\alpha \ne
\rho<\beta$, or

(v) $1+x_{\rho \beta}r y_{\gamma \delta} e_{\rho \delta}$, with
$\rho < \beta < \gamma < \delta$.

Since $\lambda$ is a linear character of $H$, the value of
$\lambda$ on $h_r$ is equal to the
product of its values on the non-trivial factors above. In view of (\ref{piw4}), it remains to show that
$\lm$ has value 1 on all non-trivial factors of type different from (iii). This
is clear for types (i), (ii) and (iv) since $(\rho,\ga),(\be,\ga)\notin D$ (as $\be,\rho\neq \al$) and $(\be,\de)\notin D$ (by (\ref{bela})).
Suppose, if possible, that a factor of type (v) is not trivial (so, in particular, $x_{\rho \beta}\neq 0$)
with $(\rho,\delta)\in D$. Then, by (*), $(\rho,\be)\in\Omega$. Thus there is $\epsilon\in\Lambda$ such $(\be,\epsilon)\in D$
and $\rho<\be<\delta<\epsilon$. But $\gamma<\epsilon$, so (\ref{bela}) is contradicted.  
\end{proof}





\section{Decomposition of a basic module}\label{aqui}

\begin{theorem}\label{inv} We have
$$(V(D,f),V(D,f))_M=[I:H]=|R|^{|\Omega|}.$$
\end{theorem}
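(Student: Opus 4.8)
The plan is to recognize the statement as an immediate consequence of Theorem \ref{fund1}, applied with $G=M$, with the normal subgroup pair $H\unlhd I$, and with the $1$-dimensional $H$-module $W$. Since Theorem \ref{monom} gives $V(D,f)\cong\ind_H^M W$, it suffices to prove $(\ind_H^M W,\ind_H^M W)_M=[I:H]$, and the remaining equality $[I:H]=|R|^{|\Omega|}$ is precisely Theorem \ref{corin}(b).

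First I would check that the hypotheses of Theorem \ref{fund1} hold. That $H$ is normal in $I$ is Theorem \ref{corin}(a). That $W$ is an irreducible $H$-module with $(W,W)_H=1$ is clear, as $W$ is $1$-dimensional over $F$. That $W$ is stabilized by $I$ follows from Theorem \ref{corin}(c): the conjugate of the $H$-module $W$ by $g\in I$ is the $1$-dimensional module afforded by the conjugate character $\lm^g$, and this is isomorphic to $W$ exactly when $g$ stabilizes $\lm$, which every element of $I$ does.

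Next I would verify the disjointness condition: for every $x\in M\setminus I$, $(W,{}^x W)_{H\cap xHx^{-1}}=0$, where ${}^x W$ is the $1$-dimensional $(H\cap xHx^{-1})$-module on which $h$ acts by the scalar $\lm(x^{-1}hx)$, whereas $W$ yields the scalar $\lm(h)$. Since both modules are $1$-dimensional, a nonzero homomorphism between them would force $\lm(h)=\lm(x^{-1}hx)$ for all $h\in H\cap xHx^{-1}$; but Theorem \ref{corin}(d), applied with $g=x$, produces $h\in H\cap xHx^{-1}$ with $\lm(h)\neq\lm(x^{-1}hx)$. Hence $W$ and ${}^x W$ are non-isomorphic irreducible modules and the $\Hom$-space vanishes. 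With all hypotheses in place, Theorem \ref{fund1} gives $(\ind_H^M W,\ind_H^M W)_M=[I:H]$, and combining this with Theorem \ref{monom} and Theorem \ref{corin}(b) completes the proof.

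I do not expect any real obstacle here: the technical core, namely producing for each $g\notin I$ an element $h$ distorting $\lm$, has already been carried out in Theorem \ref{corin}(d) (André's argument), and Theorem \ref{fund1} does the double-coset bookkeeping. The only points needing minor care are the two translations ``$I$ stabilizes $\lm$'' $\Leftrightarrow$ ``$I$ stabilizes the $H$-module $W$'' and ``${}^x W\not\cong W$'' $\Leftrightarrow$ ``$(W,{}^x W)=0$'', both of which are immediate because $W$ is $1$-dimensional.
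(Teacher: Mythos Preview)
Your proposal is correct and follows exactly the paper's own approach: the paper's proof is the single sentence ``Immediate consequence of Theorems \ref{monom}, \ref{fund1} and \ref{corin},'' and you have simply spelled out how the hypotheses of Theorem \ref{fund1} are supplied by the various parts of Theorem \ref{corin}.
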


\begin{proof} Immediate consequence of Theorems \ref{monom}, \ref{fund1} and
\ref{corin}.
\end{proof}

\begin{note}{\rm Theorem \ref{inv} is valid for arbitrary $(\Lambda,\leq)$ and $R$, not necessarily finite.}
\end{note}

\begin{theorem}\label{coni} The following conditions are equivalent:

(a) $V(D,f)$ is irreducible.

(b) $D$ is non-overlapping.

(c) $\Omega=\emptyset$.

(d) $I=H$.

(e) $(V(D,f),V(D,f))_M=1$.
\end{theorem}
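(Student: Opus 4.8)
The plan is to derive Theorem \ref{coni} as a bookkeeping consequence of results already in hand, organizing the five conditions into a short implication cycle. First I would record that (a) $\Leftrightarrow$ (b) is nothing but Theorem \ref{basirr}, so no work is needed for that link. Then I would treat (b) $\Leftrightarrow$ (c) as a purely combinatorial statement: by definition $\Omega$ is non-empty exactly when $D$ contains a pair $(\al,\be),(\ga,\de)$ with $\al<\ga<\be<\de$, and by the definition of overlapping intervals (taking the pair in either order) this is precisely the assertion that $D$ is overlapping; hence $\Omega=\emptyset$ iff $D$ is non-overlapping.

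Next I would close the cycle (c) $\Rightarrow$ (d) $\Rightarrow$ (e) $\Rightarrow$ (c). For (c) $\Rightarrow$ (d): if $\Omega=\emptyset$ then $\Gamma_1=\Gamma\cup\Omega=\Gamma$, so $I=M(\Gamma_1)=M(\Gamma)=H$. For (d) $\Rightarrow$ (e): Theorem \ref{inv} gives $(V(D,f),V(D,f))_M=[I:H]$, which is $1$ when $I=H$. For (e) $\Rightarrow$ (c): Theorem \ref{inv} also gives $(V(D,f),V(D,f))_M=|R|^{|\Omega|}$, and since $R$ has $1\neq 0$ we have $|R|\geq 2$, so $|R|^{|\Omega|}=1$ forces $|\Omega|=0$. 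Combining this cycle with (a) $\Leftrightarrow$ (b) $\Leftrightarrow$ (c) yields the equivalence of all five conditions.

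I do not expect a genuine obstacle here: the statement is a repackaging of Theorems \ref{basirr} and \ref{inv} together with the combinatorics of $\Omega$ and $\Gamma_1$, all of which are already available. The one place that calls for a moment's care is the combinatorial step (b) $\Leftrightarrow$ (c), where one must check that the single asymmetric inequality chain $\al<\ga<\be<\de$ defining membership in $\Omega$ already accounts for both clauses $\al<\ga<\be<\de$ and $\ga<\al<\de<\be$ in the definition of overlapping intervals, once the labelling of the two intervals of the pair is allowed to vary, and that no instance of $\Omega$ arises from nested or disjoint pairs. Alternatively, if one preferred to avoid reliance on Theorem \ref{basirr} for the (a) direction, one could instead observe that $\Omega=\emptyset$ is equivalent to $D$ being non-overlapping and then invoke Theorems \ref{nono} and \ref{yeso} directly; the effect is the same.
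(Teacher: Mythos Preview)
Your argument is correct and follows the same approach as the paper, which simply cites Theorems \ref{basirr} and \ref{inv}; you have spelled out explicitly the combinatorial equivalence (b) $\Leftrightarrow$ (c) and the short cycle (c) $\Rightarrow$ (d) $\Rightarrow$ (e) $\Rightarrow$ (c) that those two theorems implicitly encode. There are no gaps.
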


\begin{proof} This follows from Theorems \ref{basirr} and \ref{inv}.
\end{proof}

\begin{definition}\label{skipi} We will say that $F$ is a \emph{splitting field
for $I$ over $\lm$} if $(V,V)_I=1$ for every irreducible
$I$-module lying over $\lm$.
\end{definition}

\begin{note}\label{pie}{\rm We know from Theorem \ref{fund2} that an irreducible
$I$-module $V$ lies over~$\lm$ if and only if $V$ is a constituent
of the finite dimensional, completely reducible module $\ind_H^I
W$. Thus, there are finitely many irreducible $I$-modules lying
over $\lm$. Since $[I:H]$ and $\dm(W)$ are finite, every
irreducible $I$-module lying over $\lm$ is finite dimensional.
Thus there is a finite extension $K$ of $F$ that is a splitting
field for $I$ over $\lm$ (let $K$ be the subfield of
$\overline{F}$ generated by the entries of the matrix
representations associated to each isomorphism type of irreducible
$I$-module lying over $\lm$). Thus, there is not much loss of
generality in assuming that $F$ itself is a splitting field for
$I$ over $\lm$.}
\end{note}

\begin{note}\label{uop}{\rm
Suppose that $\lm$ is extendible to $I$ (see Theorem \ref{ext} for
the exact conditions when this happens). Since $(W,W)_H=1$, it
follows from Lemma \ref{forma} and Theorem \ref{cli4} that $F$ is
a splitting field for $I$ over $\lm$ provided $F$ is a splitting
field for $I/H$ in the usual sense. In Example \ref{yeka2}, $\lm$
is not extendible to~$I$ but, nevertheless, a splitting field for
$I/H$ is a splitting field for $I$ over $\lm$. This implication
may be true in general, a matter that will not be discussed
further. }
\end{note}

\begin{theorem}\label{xasc} Let $\al=\min\{\al_i\,|\, 1\leq i\leq m\}$ and
$\be=\max\{\be_i\,|\, 1\leq i\leq m\}$. Suppose the closed interval
$[\al,\be]$ of $\Lambda$ is well-ordered under $\geq$, the inverse order
of~$\leq$. Assume, in addition, that
$R$ is finite. Then $I$ is a strongly ascendant subgroup of $M$.
\end{theorem}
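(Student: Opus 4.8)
The plan is to build an explicit transfinite chain of closed subsets of $\Phi$ climbing from $\Gamma_1$ up to $\Phi$ one element at a time, and to read off from it the data required by Definition \ref{asc}. Recall that $M=M(\Phi)$ and $I=M(\Gamma_1)$, and that
$$
\Phi\setminus\Gamma_1=\{(\al_i,\ga)\mid 1\le i\le m,\ \al_i<\ga<\be_i,\ (\al_i,\ga)\notin\Omega\};
$$
in particular the second coordinate of any such pair lies in the open interval $(\al,\be)$, which, being a subset of $[\al,\be]$, is well-ordered by $\geq$. First I would well-order $\Phi\setminus\Gamma_1$ lexicographically: declare $(\mu,\nu)$ to precede $(\mu',\nu')$ when $\nu>\nu'$, or $\nu=\nu'$ and $\mu<\mu'$. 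This is a genuine well-order, being the lexicographic product of the $\geq$-well-order on the second coordinates that occur with the finite sets of first coordinates occurring alongside each. Enumerating $\Phi\setminus\Gamma_1=\{(\mu_\xi,\nu_\xi)\mid \xi<\tau\}$ in this order, set $\Delta_\xi=\Gamma_1\cup\{(\mu_\eta,\nu_\eta)\mid\eta<\xi\}$ for $\xi<\tau$ and $\Delta_\tau=\Phi$; thus $\Delta_0=\Gamma_1$, $\Delta_\tau=\Phi$, and each successor step is $\Delta_{\xi+1}=\Delta_\xi\cup\{(\mu_\xi,\nu_\xi)\}$.

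The point that makes everything fit together is that \emph{normality is automatic}. If $\Delta$ is a closed subset of $\Phi$ and $\Delta'=\Delta\cup\{(\mu,\nu)\}$ is also closed, then $\Delta$ is normal in $\Delta'$: the only new instances of the two normality conditions are ``$(a,\mu)\in\Delta\Rightarrow(a,\nu)\in\Delta$'' and ``$(\nu,c)\in\Delta\Rightarrow(\mu,c)\in\Delta$'', and each follows at once from closedness of $\Delta'$ together with $\mu<\nu$ (which rules out the degenerate possibilities $a=\mu$, resp. $c=\nu$). Consequently $M(\Delta_\xi)\unlhd M(\Delta_{\xi+1})$, and by Proposition \ref{trax} the index equals $[M(\Delta_{\xi+1}):M(\Delta_\xi)]=|R|$, which is finite because $R$ is finite and prime to $\chr(F)$ by Lemma \ref{cara}. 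At limit ordinals $\lambda\le\tau$ we have $\Delta_\lambda=\bigcup_{\xi<\lambda}\Delta_\xi$, whence $M(\Delta_\lambda)=\bigcup_{\xi<\lambda}M(\Delta_\xi)$. Thus, granting that every $\Delta_\xi$ is closed, the chain $(M(\Delta_\xi))_{\xi\le\tau}$, indexed by $X=\tau+1$ with first element $\Gamma_1$ and last element $\Phi$, witnesses that $I$ is a strongly ascendant subgroup of $M$.

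It remains to verify, by transfinite induction on $\xi$, that each $\Delta_\xi$ is a closed subset of $\Phi$; this is the core of the argument. The base case $\Delta_0=\Gamma_1$ is Lemma \ref{clonor}, and at limit ordinals a union of a chain of closed sets is closed. For the successor step write the new element as $(\mu,\nu)=(\al_i,\ga)$ with $\al_i<\ga<\be_i$; by the remark above, $\Delta_{\xi+1}$ is closed provided (i) $(\al_i,c)\in\Delta_\xi$ whenever $(\ga,c)\in\Delta_\xi$, and (ii) $(a,\ga)\in\Delta_\xi$ whenever $(a,\al_i)\in\Delta_\xi$. For (i): here $\al_i<\ga<c$, so if $c\ge\be_i$ then $(\al_i,c)$ lies in no $[\al_k,\rightarrow,\be_k)$ and hence in $\Gamma$; if $\al_i<c<\be_i$ then $(\al_i,c)$ is either in $\Omega\subseteq\Gamma_1$ or is a member of $\Phi\setminus\Gamma_1$ with second coordinate $c>\ga$, which by our ordering was adjoined at an earlier stage, so in either case $(\al_i,c)\in\Delta_\xi$. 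For (ii): the hypothesis forces $(a,\al_i)\in\Gamma_1$, since any member of $\Phi\setminus\Gamma_1$ whose second coordinate is $\al_i$ is, because $\al_i<\ga$, adjoined only after $(\al_i,\ga)$ and so is not yet in $\Delta_\xi$; if $a\notin\{\al_1,\dots,\al_m\}$ then $(a,\ga)\in\Gamma$ and we are done, while if $a=\al_j$ one distinguishes the cases $(\al_j,\al_i)\in\Gamma$ and $(\al_j,\al_i)\in\Omega$, and in each — using that $D$ is basic and the definition of $\Omega$, exactly as in the proof of Theorem \ref{corin}(d) and in \cite[Proposition 1]{A3} — one checks that $(\al_j,\ga)$ lies in $\Gamma$, or in $\Omega\subseteq\Gamma_1$, or is a member of $\Phi\setminus\Gamma_1$ in the same column $\ga$ with first coordinate $\al_j<\al_i$, hence again adjoined earlier. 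This completes the induction and the proof.

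The main obstacle is precisely this successor step: one has to identify the correct well-order and then confirm that it works. Condition (i) forces larger second coordinates to be processed before smaller ones — this is where the hypothesis that $[\al,\be]$ is well-ordered by $\geq$ is used, since it is what makes such a prescription executable — while condition (ii) forces, within a fixed second coordinate, smaller first coordinates to be processed before larger ones; the real content is the compatibility of these two prescriptions together with the verification, via the structure of $\Omega$ and the basic-ness of $D$, that they genuinely keep each one-element extension closed.
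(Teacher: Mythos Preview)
Your proposal is correct and follows essentially the same route as the paper: enumerate the missing pairs $(\al_i,\ga)\in\Phi\setminus\Gamma_1$ by decreasing second coordinate and, within each column, by increasing first coordinate, then verify that each one-element adjunction stays closed (and hence automatically normal) with index $|R|$. The paper packages this as an induction over $[\al,\be]$ under $\geq$, handling the finitely many column entries at each successor step, and asserts the closedness and normality of the intermediate sets without spelling out the verification; your conditions (i) and (ii) make that verification explicit.
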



\begin{proof} Let $\Psi$ be a closed subset of
$\Phi$ satisfying the following conditions:

(A1) There is a first $\de\in (\al,\be]$ under $\geq$ such that
$M((\rightarrow,\de])\subseteq M(\Psi)$.

(A2) If $\ga=\de'$ is the immediate successor of $\de$ in $[\al,\be]$ under $\geq$, then
$$
\Psi^\sharp=\{(\si,\ga)\in\Phi\,|\, (\si,\ga)\notin\Psi\}
$$
is a finite subset of $\Phi$.

An example is $\Psi=\Gamma_1$, in which case (A1) holds with $\de=\be$ and $|\Psi^\sharp|\leq 1$.

In any case, either $\Psi^\sharp=\emptyset$, and we set
$\Psi'=\Psi$, or else
\begin{equation}
\label{punta}
\Psi^\sharp=\{(\si_1,\ga),\dots,(\si_n,\ga)\},
\end{equation}
$$
\si_1<\cdots<\si_n,
$$
in which case the following are all closed subsets of $\Phi$, each normal in the next:
$$
\Psi(0)=\Psi\subset \Psi(1)=\Psi\cup\{(\si_1,\ga)\} \subset\cdots\subset \Psi(n)=\Psi\cup \{(\si_1,\ga),\dots,(\si_n,\ga)\}.
$$
Moreover, by Proposition \ref{trax},
$$
[M(\Psi(i+1)):M(\Psi(i))]=|R|,\quad 0\leq i<n,
$$
where $\chr(F)\nmid |R|$ by Lemma \ref{cara}, and $\Psi'=\Psi(n)$ satisfies (A1), (A2) as well as
\begin{equation}
\label{adentro}
M((\rightarrow,\ga])\subseteq M(\Psi').
\end{equation}
Define the family $(\Psi_\ga)_{\ga\in [\al,\be]}$ of closed subsets of $\Psi$ as follows:
$$
\Psi_\be=\Gamma_1,\,  \Psi_{\ga}=(\Psi_\de)'\text{ if
}\ga=\de',\,\Psi_\ga=\underset{\de>\ga}\cup \Psi_\de\text{ if
}\ga\text{ has no immediate } \geq\!-\text{predecessor}.
$$
This is possible because, as $D$ is finite, so is (\ref{punta})
when $\Psi=\Psi_\de$ for any $\de\in (\al,\be]$. By construction,
$\Psi_\de$ is subnormal in $\Psi_{\de'}$, with
$[M(\Psi_{\de'}):M(\Psi_\de)]$ finite and not divisible by
$\chr(F)$ for every $\de\in (\al,\be]$. Moreover, (\ref{adentro})
implies $M=M(\Psi_\al)$.
\end{proof}

\begin{theorem}\label{nuc} Suppose $R$ is finite and $F$ is a splitting field for $I$ over
$\lm$. Let
$$
\ind_H^I W=m_1V_1\oplus\cdots\oplus m_t V_t,
$$
be the decomposition of $\ind_H^I W$ ensured by part (a) of
Theorem \ref{fund2} (which applies, due to Lemma \ref{cara} and
parts (a), (b) and (c) of Theorem \ref{corin}), where
$\{V_1,\dots,V_t\}$ is a full set of representatives for the
isomorphism classes of irreducible $I$-modules lying over $W$, and
each $m_i$ satisfies
$$
(W,\res_H^I V_i)_H=\dm(V_i)=m_i.
$$
Assume, in addition, that $I$ is a strongly ascendant subgroup of $M$. Then
$$
V(D,f)\cong m_1\ind_I^M V_1\oplus\cdots\oplus m_t \ind_I^M V_t,
$$
where $\ind_I^M V_1,\dots,\ind_I^M V_t$ are non-isomorphic irreducible $M$-modules, and
$$
(\ind_I^{M} V_i, V(D,f))_M=\dm(V_i)=m_i, \quad 1\leq i\leq t.
$$
\end{theorem}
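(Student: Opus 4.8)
The plan is to reduce the statement entirely to Theorem \ref{fund2}, via the monomial presentation $V(D,f)\cong\ind_H^M W$ furnished by Theorem \ref{monom}; the real work is only to certify that the hypotheses of Theorems \ref{fund1} and \ref{fund2} hold with $G=M$ and with $H$, $I$, $W$ as in the running notation, and then to unwind what those theorems output. First I would assemble the formal inputs. The module $W$ is $1$-dimensional, hence irreducible over $H$, and $(W,W)_H=1$ since $\End_H(W)=F$. By parts (a) and (c) of Theorem \ref{corin}, $H\unlhd I$ and $I$ stabilizes $\lm$, hence stabilizes $W$. By part (b) of Theorem \ref{corin}, $[I:H]=|R|^{|\Omega|}$, which is finite because $R$ is finite, and not divisible by $\chr(F)$ because $\chr(F)\nmid|R|$ by Lemma \ref{cara}. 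The only non-formal hypothesis of Theorem \ref{fund1} is the disjointness condition $(W,{}^g W)_{H\cap gHg^{-1}}=0$ for $g\in M\setminus I$; since $W$ is $1$-dimensional with $H\cap gHg^{-1}$ acting by $\lm$ and acting on ${}^g W$ by the conjugate character $h\mapsto\lm(g^{-1}hg)$, this $\Hom$-space is non-zero precisely when $\lm$ agrees with its $g$-conjugate on $H\cap gHg^{-1}$, and part (d) of Theorem \ref{corin} says exactly that this never happens for $g\notin I$. Thus all hypotheses of Theorem \ref{fund2} are in force.

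Next I would simply read off the conclusions. Part (a) of Theorem \ref{fund2} gives the decomposition $\ind_H^I W=m_1V_1\oplus\cdots\oplus m_tV_t$, with $\{V_1,\dots,V_t\}$ a full set of representatives of the isomorphism classes of irreducible $I$-modules lying over $W$, together with $d(i)=(W,\res_H^I V_i)_H=m_i(V_i,V_i)_I$, where $\res_H^I V_i$ is a direct sum of $d(i)$ copies of $W$. Since $F$ is a splitting field for $I$ over $\lm$, each $(V_i,V_i)_I=1$, so $m_i=d(i)=(W,\res_H^I V_i)_H$; and since $\dm W=1$, also $\dm(V_i)=d(i)=m_i$, which is the identity $(W,\res_H^I V_i)_H=\dm(V_i)=m_i$ in the statement. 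Part (b) of Theorem \ref{fund2}, applied with outer group $M$, yields $\ind_H^M W\cong m_1\ind_I^M V_1\oplus\cdots\oplus m_t\ind_I^M V_t$, the relations $(\ind_I^M V_i,\ind_I^M V_j)_M=0$ for $i\neq j$, and $(\ind_I^M V_i,\ind_H^M W)_M=m_i(V_i,V_i)_I=d(i)$.

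Finally I would invoke part (c) of Theorem \ref{fund2} under condition (C3): by hypothesis $I$ is a strongly ascendant subgroup of $M$ (this is the role of Theorem \ref{xasc}), and we have just observed $(V_i,V_i)_I=1$ for all $i$, so each $\ind_I^M V_i$ is an irreducible $M$-module; combined with $(\ind_I^M V_i,\ind_I^M V_j)_M=0$ for $i\neq j$, irreducibility forces $\ind_I^M V_1,\dots,\ind_I^M V_t$ to be pairwise non-isomorphic. Transporting everything through $V(D,f)\cong\ind_H^M W$ of Theorem \ref{monom} gives the asserted decomposition of $V(D,f)$ and the formula $(\ind_I^M V_i,V(D,f))_M=d(i)=\dm(V_i)=m_i$. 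I do not expect a genuine obstacle: every step is a citation of an already-established result, and the only point requiring care is ensuring it is condition (C3)---not (C1) or (C2)---of Theorem \ref{fund2}(c) that applies, which is precisely why both the strong-ascendancy hypothesis and the splitting-field hypothesis are imposed. The substantive content was spent in proving Theorem \ref{corin}, Theorem \ref{fund2} and Theorem \ref{xasc}; here the proof is a short bookkeeping assembly.
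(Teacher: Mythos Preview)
Your proposal is correct and follows exactly the paper's approach: the paper's proof is the single line ``Use Theorems \ref{monom}, \ref{fund2} and \ref{corin}(d),'' and you have simply unpacked this citation in detail, verifying each hypothesis of Theorem \ref{fund2} via Theorem \ref{corin} and Lemma \ref{cara} and then reading off the conclusions through the isomorphism $V(D,f)\cong\ind_H^M W$ of Theorem \ref{monom}. The only minor remark is that your parenthetical ``this is the role of Theorem \ref{xasc}'' is not strictly needed here, since strong ascendancy of $I$ is a direct hypothesis of Theorem \ref{nuc} rather than something to be derived.
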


\begin{proof} Use Theorems \ref{monom}, \ref{fund2} and
\ref{corin}(d).
\end{proof}

\begin{note}\label{ascbueno}{\rm Suppose $R$ is finite. In view of Theorem \ref{xasc}, all we have to do to ensure
that every $I$ is a strongly ascendant subgroup of $M$ is to start
with any well-order and impose its inverse on $\Lambda$. This is
automatic when $\Lambda$ is finite, which is just a very special,
albeit important, case.

Observe that $I$ need not be a strongly ascendant subgroup of $M$ in general.
Suppose, for instance, that $\Lambda$ is a closed interval
$[\al,\be]$ and $\be$ is an accumulation point when $[\al,\be]$ is
given the topology of the strict order $<$ associated to $\leq$
(cf. \cite[Chapter 1, Problem I]{K}). If $D=\{(\al,\be)\}$, then
$I=H$ is self-normalizing.
}
\end{note}

\begin{note}
{\rm Suppose $R$ is finite, $M=U_n(R)$ and $F$ is a splitting
field for (the finite group) $I$. Then condition (C2) of Theorem
\ref{fund2} is satisfied, and the conclusion of Theorem~\ref{nuc}
follow automatically, that is, without resorting to strongly
ascendant subgroups.}
\end{note}

\begin{note}{\rm Let
$$
\Gamma_0=\Gamma\setminus\{(\sigma,\tau)\in\Gamma\,|\,\exists\;
i\text{ such that }1\leq i\leq m\text{ and }
\al_i<\sigma<\tau<\be_i\}.
$$
It is easy to see that $N=M(\Gamma_0)$ satisfies 
$$
N=\mathrm{core}_M(H).
$$
By Theorem \ref{corin}(c), we have $I\subseteq I_M(\lm|_N)$,
although equality is not necessarily true in general. However, it
does occur, occasionally. Suppose, for instance, that, if $1\leq
i<m$, then $\al_{i+1}$ is the only element of $\Lambda$ satisfying
$\al_i<\al_{i+1}< \be_i$. Then $I=I_M(\lm|_N)$ (the case $m=1$ is
treated in \S\ref{elemeM}). Whatever the example, suppose that
$I=I_M(\lm|_N)$. Then condition (C1) of Theorem \ref{fund2} is
satisfied. Thus, if $R$ is finite and $F$ is a splitting field for
$I$ over $\lm$, then the conclusion of Theorem~\ref{nuc} is
achieved without involving strongly ascendant subgroups.}
\end{note}

\begin{definition}\label{trip} We will say that $D$ has a special triple if it contains a subset $\{(\al_1,\be_1), (\al_2,\be_2), (\al_3,\be_3)\}$
satisfying:
$$
\al_1<\al_2<\al_3=\be_1<\be_2<\be_3.
$$
\end{definition}

\begin{prop}\label{barb} $[\Gamma_1,\Gamma_1]\cap D=[\Omega,\Omega]\cap D$. Moreover, these
are non-empty if and only if $D$ contains a special triple.
\end{prop}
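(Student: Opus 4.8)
The plan is to establish the set identity first, then each direction of the ``moreover''. Throughout I use that $D$ is basic (distinct members of $D$ have distinct left endpoints, and distinct right endpoints) and that $\Gamma_1=\Gamma\cup\Omega$ is a disjoint union, so any element of $\Gamma_1$ not lying in $\Gamma=\Phi\setminus\bigcup_i[\al_i,\rightarrow,\be_i)$ automatically lies in $\Omega$. The inclusion $[\Omega,\Omega]\cap D\subseteq[\Gamma_1,\Gamma_1]\cap D$ is immediate from $\Omega\subseteq\Gamma_1$. For the reverse inclusion I would take $(\al_k,\be_k)\in D$ lying in $[\Gamma_1,\Gamma_1]$, fix a witnessing chain $\al_k=\ga_1<\ga_2<\cdots<\ga_n=\be_k$ with $n\ge3$ and every edge $(\ga_j,\ga_{j+1})$ in $\Gamma_1$, and show that in fact every edge lies in $\Omega$, so that $(\al_k,\be_k)\in[\Omega,\Omega]$. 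The key point is that an edge fails to lie in $\Gamma$ precisely when its left endpoint is some $\al_i$ and its right endpoint is $<\be_i$. I would induct along the chain: for $j=1$ the inequality $\ga_2<\ga_n=\be_k$ puts $(\ga_1,\ga_2)=(\al_k,\ga_2)$ outside $\Gamma$, hence in $\Omega$, and unwinding the definition of $\Omega$ (using that $D$ is basic) produces $(\ga_2,v)\in D$ with $v>\be_k$; the inductive step is identical, since whenever $\ga_j=\al_{i_j}$ with $\be_{i_j}>\be_k$ one has $\ga_{j+1}\le\ga_n=\be_k<\be_{i_j}$, again forcing $(\ga_j,\ga_{j+1})\notin\Gamma$, hence into $\Omega$, and unwinding then pins down $\ga_{j+1}=\al_{i_{j+1}}$ with $\be_{i_{j+1}}>\be_k$.

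For the ``moreover'' it suffices, by the identity just proved, to deal with $[\Omega,\Omega]\cap D$. If $D$ contains a special triple, say $(\si_1,\tau_1),(\si_2,\tau_2),(\si_3,\tau_3)\in D$ with $\si_1<\si_2<\si_3=\tau_1<\tau_2<\tau_3$, then $(\si_1,\si_2)\in\Omega$ (witnessed by the pair $(\si_1,\tau_1),(\si_2,\tau_2)$, noting $\si_1<\si_2<\tau_1<\tau_2$) and $(\si_2,\si_3)=(\si_2,\tau_1)\in\Omega$ (witnessed by $(\si_2,\tau_2),(\si_3,\tau_3)$, noting $\si_2<\si_3<\tau_2<\tau_3$); hence the two-edge $\Omega$-chain $(\si_1,\si_2),(\si_2,\si_3)$ shows $(\si_1,\tau_1)=(\si_1,\si_3)\in[\Omega,\Omega]\cap D$, which is therefore non-empty. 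Conversely, starting from $(\al_k,\be_k)\in[\Omega,\Omega]\cap D$ with a chain $\al_k=\ga_1<\cdots<\ga_n=\be_k$, $n\ge3$, of $\Omega$-edges, I would record for each edge $(\ga_i,\ga_{i+1})$ the witnessing pairs $(\ga_i,u_i),(\ga_{i+1},v_i)\in D$ with $\ga_i<\ga_{i+1}<u_i<v_i$. Since $D$ is basic, $u_1=\be_k$ (comparing left endpoints with $(\al_k,\be_k)$) and $v_i=u_{i+1}$ for $1\le i\le n-2$ (comparing left endpoints at $\ga_{i+1}$), which telescopes to $\be_k=u_1<u_2<\cdots<u_{n-1}<v_{n-1}$; since $n\ge3$, in particular $\be_k<u_2<v_{n-1}$, and also $(\ga_n,v_{n-1})=(\be_k,v_{n-1})\in D$. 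Then the three members $(\al_k,\be_k)$, $(\ga_2,u_2)$, $(\be_k,v_{n-1})$ of $D$ form a special triple: $\al_k<\ga_2<\be_k$ (as $\ga_2<\ga_n=\be_k$), the left endpoint of the last interval equals the right endpoint of the first, and $\be_k<u_2<v_{n-1}$.

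The delicate point is the converse direction of the ``moreover'': one must extract a genuine special triple from a chain of arbitrary length $n\ge3$, not merely from the length-three case. What makes this work is the bookkeeping identities $u_1=\be_k$ and $v_i=u_{i+1}$ --- both consequences of the ``distinct left endpoints'' property of $D$ applied to members of $D$ sharing a left endpoint --- which collapse the telescoping chain $\be_k=u_1<u_2<\cdots<u_{n-1}<v_{n-1}$ down to the three intervals above. The same mechanism --- an edge of a $\Gamma_1$-chain joining the endpoints of a member of $D$ cannot lie in $\Gamma$, hence lies in $\Omega$, and its $\Omega$-witnesses are then forced --- drives the nontrivial inclusion in the set identity, so once this observation is set up cleanly both parts are routine, requiring nothing beyond chasing the inequalities.
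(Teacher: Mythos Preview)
Your proposal is correct and follows essentially the same approach as the paper: both arguments walk along a $\Gamma_1$-chain joining the endpoints of some $(\al_k,\be_k)\in D$, observe that each edge has left endpoint equal to some $\al_i$ and right endpoint strictly below the corresponding $\be_i$ (hence the edge is not in $\Gamma$, so must lie in $\Omega$), and then read off a special triple from the $\Omega$-witnesses forced by the basicness of $D$. Your write-up is somewhat more explicit than the paper's --- you separate the set identity from the ``moreover'', you supply the easy direction (a special triple yields a two-edge $\Omega$-chain landing in $D$) that the paper leaves to the reader, and your telescoping bookkeeping $u_1=\be_k$, $v_i=u_{i+1}$ packages the induction cleanly --- but the underlying mechanism is identical.
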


\begin{proof} Suppose $(\al,\be)\in [\Gamma_1,\Gamma_1]\cap D$. Then, by definition, there is a chain
$$(\ga_1,\ga_2),\dots,(\ga_{n-1},\ga_n)\in \Gamma_1,\quad n\geq 3,$$ such that
$\ga_1=\al$ and $\ga_n=\be$. Since $\Gamma_1\subseteq\Phi$, we have
$$
\al=\ga_1<\ga_2<\cdots<\ga_{n-1}<\ga_n=\be.
$$
In particular, $\al<\ga_2<\be$, so $(\al,\ga_2)\notin\Gamma$ and therefore $(\al,\ga_2)\in\Omega$. By definition,
there exists $(\al_2,\be_2)\in D$ such that
$$
\al<\al_2<\be<\be_2,\quad \ga_2=\al_2.
$$
From
$$
\al_2<\ga_3\leq \be<\be_2,
$$
we infer $(\al_2,\ga_3)\notin\Gamma$, hence $(\al_2,\ga_3)\in\Omega$. By definition,
there exists $(\al_3,\be_3)\in D$ such that
$$
\al_2<\al_3<\be_2<\be_3,\quad \ga_3=\al_3.
$$
If $\al_3=\be$ then
$$
\al<\al_2<\al_3=\be<\be_2<\be_3
$$
and we are done. Otherwise,
$$
\al_3<\ga_4\leq \be <\be_3,
$$
implies, as before,
the existence of $(\al_4,\be_4)\in D$ such that
$$
\al_3<\al_4<\be_3<\be_4,\quad \ga_4=\al_4.
$$
If $\al_4=\be$ then
$$
\al<\al_2<\al_3<\al_4=\be<\be_2<\be_3<\be_4
$$
and we are done. Otherwise continue this process to obtain the desired result.
\end{proof}

\begin{theorem}\label{ext} The following conditions are equivalent:

(a) $\lm$ is extendible to $I$.

(b) $[\Gamma_1,\Gamma_1]\cap D=\emptyset$.

(c) $[\Omega,\Omega]\cap D=\emptyset$.

(d) $D$ has no special triple.
\end{theorem}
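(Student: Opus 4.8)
The plan is to observe first that the equivalences (b)~$\iff$~(c)~$\iff$~(d) are already on the table: Proposition~\ref{barb} asserts $[\Gamma_1,\Gamma_1]\cap D=[\Omega,\Omega]\cap D$ and that this common set is empty precisely when $D$ contains no special triple. So the entire content of the theorem is the equivalence (a)~$\iff$~(b). Before attacking it I would record two facts that will be used repeatedly: every $(\al_i,\be_i)$ lies in $\Phi$ but in none of the open sets $[\al_j,\rightarrow,\be_j)$, so $D\subseteq\Gamma$; and, identifying $M_{\al\be}$ with $R^+$ via (\ref{rma}), one has $\lm|_{M_{\al\be}}=\chi_{\al\be}$ for every $(\al,\be)\in\Gamma$, where $\chi_{\al\be}$ denotes $\lm_i$ when $(\al,\be)=(\al_i,\be_i)$ for some $i$ and the trivial character otherwise. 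The latter holds because among the factors $\widehat{\lm_j}$ defining $\lm$ only the one with $(\al_j,\be_j)=(\al,\be)$ can fail to be trivial on $M_{\al\be}$, using that $\Gamma$ is disjoint from each $[\al_j,\rightarrow,\be_j)$ and that $D$ is basic (all $\al_j$ distinct).

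For (a)~$\Rightarrow$~(b) I would argue by contraposition. Assume $[\Gamma_1,\Gamma_1]\cap D\neq\emptyset$ and choose $(\al_i,\be_i)$ in it. Then $M_{\al_i\be_i}\subseteq M([\Gamma_1,\Gamma_1])=[I,I]$ by Proposition~\ref{co2}(a), so any linear character of $I$ is trivial on $M_{\al_i\be_i}$. But $\lm|_{M_{\al_i\be_i}}=\lm_i$ is nontrivial, since a right primitive linear character of the nonzero ring $R$ cannot be trivial (its kernel would be all of $R$, which contains nonzero right ideals). Hence $\lm$ has no extension to $I$.

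For (b)~$\Rightarrow$~(a) I would construct the extension explicitly. Under (b) we have $D\cap[\Gamma_1,\Gamma_1]=\emptyset$, hence $D\subseteq\Gamma_1\setminus[\Gamma_1,\Gamma_1]$, so $\prod_{(\al,\be)\in\Gamma_1\setminus[\Gamma_1,\Gamma_1]}\chi_{\al\be}$ is a well-defined linear character of the external direct product $\prod_{(\al,\be)\in\Gamma_1\setminus[\Gamma_1,\Gamma_1]}M_{\al\be}$. Transporting it through the isomorphism $\Theta$ of Proposition~\ref{co2}(b) applied to $\Gamma_1$, and pulling back along $I\to I/[I,I]$, yields a linear character $\tilde\lm$ of $I$; this routing is what makes homomorphy automatic. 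From the definition of $\Theta$, if $g\in I$ has the canonical expression $g=\prod_{(\al,\be)\in S(g)}(1+r_{\al\be}e_{\al\be})$ of Proposition~\ref{co1}, then $\tilde\lm(g)=\prod_{(\al,\be)\in S(g)\setminus[\Gamma_1,\Gamma_1]}\chi_{\al\be}(r_{\al\be})$, and since $\chi_{\al\be}$ is trivial whenever $(\al,\be)\in[\Gamma_1,\Gamma_1]$ (such a pair not being in $D$, by (b)), this equals $\prod_{(\al,\be)\in S(g)}\chi_{\al\be}(r_{\al\be})$.

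It then remains to check $\tilde\lm|_H=\lm$, which I expect to be the only delicate step. For $h\in H$ the canonical expression has $S(h)\subseteq\Gamma$, so $\tilde\lm(h)=\prod_{(\al,\be)\in S(h)}\chi_{\al\be}(r_{\al\be})$; on the other hand, $\lm$ being a homomorphism, $\lm(h)=\prod_{(\al,\be)\in S(h)}\lm|_{M_{\al\be}}(1+r_{\al\be}e_{\al\be})=\prod_{(\al,\be)\in S(h)}\chi_{\al\be}(r_{\al\be})$ by the preliminary remark. Thus $\tilde\lm|_H=\lm$, completing (b)~$\Rightarrow$~(a). The essential point — and the reason (b) is exactly the right hypothesis — is the bookkeeping across the two abelianizations: realizing $\tilde\lm$ through the explicit direct‑product description of $I/[I,I]$ forces it to be a homomorphism, and condition (b) is precisely what ensures that the factors of $\lm$ indexed by $S(h)\cap[\Gamma_1,\Gamma_1]$ are trivial, so that $\tilde\lm$ and $\lm$ agree on all of $H$.
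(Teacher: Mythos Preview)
Your proof is correct and follows essentially the same route as the paper: invoke Proposition~\ref{barb} for the equivalences among (b), (c), (d); use Proposition~\ref{co2}(a) to show $M_{\al_i\be_i}\subseteq[I,I]$ and block extension when (b) fails; and use Proposition~\ref{co2}(b) to transport the product of the $\lm_i$ through the abelianization of $I$ when (b) holds. The paper's argument is terser---it simply asserts that the map $\mu$ built on the external direct product ``gives rise to an extension of $\lm$ to $I$''---whereas you spell out the verification that $\tilde\lm|_H=\lm$ via the canonical expressions of Proposition~\ref{co1}, which is a welcome addition but not a different idea.
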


\begin{proof} Suppose first $(\al,\be)\in [\Gamma_1,\Gamma_1]\cap D$. By Proposition \ref{co2}, $M_{\al\be}\subseteq [I,I]$,
so any group homomorphism $I\to F^*$ is trivial on $M_{\al\be}$, whence $\lm$ is not extendible to $I$.

Suppose next $[\Gamma_1,\Gamma_1]\cap D=\emptyset$. Then $D\subseteq \Gamma_1\setminus [\Gamma_1,\Gamma_1]$. We certainly
have a group homomorphism from the external direct product of all $M_{\al\be}$, $(\al,\be)\in\Gamma_1\setminus [\Gamma_1,\Gamma_1]$,
$$\mu:\underset{(\al,\be)\in\Gamma_1\setminus [\Gamma_1,\Gamma_1]}\prod
M_{\al\be}\to F^*$$ that agrees with $\lm_i$ on $M_{\al_i\be_i}$ for every $1\leq i\leq m$. It follows from Proposition \ref{co2}
that $\mu$ gives rise to an extension of $\lm$ to $I$.

This proves the equivalence between (a) and (b). Now apply Proposition \ref{barb}.
\end{proof}

\begin{theorem}\label{tx} Suppose $|R|$ is finite, $F$ is a splitting field for $S=I/H$ and $D$ has no special triple.
Assume, in addition, that $I$ is a strongly ascendant subgroup of~$M$. Then $V(D,f)$ has the
following decomposition as the direct sum of irreducible
non-isomorphic $M$-modules with indicated multiplicities:
$$
V(D,f)\cong \underset{\chi\in\Irr(S)}\bigoplus
\chi(1)\;\ind_I^M (U_\chi\otimes W_1),
$$
where $U_\chi$ is an irreducible $S$-module -viewed as an $I$-module- affording $\chi$, and $W_1$ is the vector space $W$
acted upon via an extension of $\lm$ to $I$.
\end{theorem}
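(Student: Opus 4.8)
The plan is to combine the general decomposition machinery of Theorem \ref{nuc} with the extension result Theorem \ref{ext} and the Clifford-theoretic structure of $W$ as an irreducible $H$-module with $(W,W)_H=1$. Since $D$ has no special triple, Theorem \ref{ext} guarantees that $\lm$ extends to a linear character of $I$; let $W_1=Ft_1$ be the corresponding $1$-dimensional $I$-module, so that $W_1$ restricts to $W$ on $H$. The first step is to rewrite $\ind_H^I W$ using the standard ``tensor identity'': since $\res_H^I W_1\cong W$, we have
$$
\ind_H^I W\cong \ind_H^I(\res_H^I W_1)\cong (\ind_H^I F)\otimes W_1\cong P\otimes W_1,
$$
where $P$ is the permutation $I$-module on $I/H$, i.e. the regular module of the finite group $S=I/H$ inflated to $I$.

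The second step is to decompose $P$ as an $S$-module. Because $R$ is finite, $[I:H]=|R|^{|\Omega|}$ is finite and, by Lemma \ref{cara}, $\chr(F)\nmid|R|$, hence $\chr(F)\nmid |S|$, so Maschke's theorem applies to $S$. Since $F$ is a splitting field for $S$, the group algebra $FS$ decomposes as $\bigoplus_{\chi\in\Irr(S)}\chi(1)\,U_\chi$, where $U_\chi$ is the irreducible $S$-module affording $\chi$. Inflating to $I$ and tensoring with $W_1$ gives
$$
\ind_H^I W\cong \underset{\chi\in\Irr(S)}\bigoplus \chi(1)\,(U_\chi\otimes W_1).
$$
The third step is to check that these summands are exactly the irreducible $I$-modules $V_1,\dots,V_t$ of Theorem \ref{nuc}, with the right multiplicities. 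Here I would invoke Lemma \ref{forma} and Theorem \ref{cli4}: $W$ is irreducible as an $H$-module with $\End_H(W)=F$ (it is $1$-dimensional), $H\unlhd I$, and $H$ acts trivially on each inflated module $U_\chi$; Theorem \ref{cli4} then says $U\mapsto U\otimes W_1$ is a bijection between irreducible $I$-modules acted on trivially by $H$ (equivalently irreducible $S$-modules) and irreducible $I$-modules lying over $W$, and Lemma \ref{forma} shows the multiplicity of $U_\chi\otimes W_1$ in $\ind_H^I W$ equals the multiplicity of $U_\chi$ in $P$, namely $\chi(1)$. Thus in the notation of Theorem \ref{nuc} we have $\{V_1,\dots,V_t\}=\{U_\chi\otimes W_1\mid \chi\in\Irr(S)\}$ and $m_i=\dm V_i=\chi(1)$; note this also confirms $(V_i,V_i)_I=1$, so $F$ is a splitting field for $I$ over $\lm$ in the sense of Definition \ref{skipi}, and Theorem \ref{nuc} genuinely applies.

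The final step is to feed this back into Theorem \ref{nuc}: since $I$ is a strongly ascendant subgroup of $M$, part of Theorem \ref{nuc} yields
$$
V(D,f)\cong m_1\ind_I^M V_1\oplus\cdots\oplus m_t\ind_I^M V_t\cong \underset{\chi\in\Irr(S)}\bigoplus \chi(1)\,\ind_I^M(U_\chi\otimes W_1),
$$
with the $\ind_I^M(U_\chi\otimes W_1)$ non-isomorphic and irreducible. I expect the main obstacle to be purely bookkeeping: verifying carefully that the extension $W_1$ supplied by Theorem \ref{ext} really does restrict to $W$ (so that the tensor identity $\ind_H^I(\res_H^I W_1)\cong(\ind_H^I F)\otimes W_1$ is legitimate), and matching the abstract multiplicities $m_i$ from Theorem \ref{nuc} with the character degrees $\chi(1)$ via Lemma \ref{forma} and Theorem \ref{cli4} rather than re-deriving them by hand. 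No new computation inside the McLain group is needed; everything reduces to the finite group $S=I/H$ and the already-established ascendancy and normality facts from Theorems \ref{xasc} and \ref{corin}.
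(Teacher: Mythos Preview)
Your proposal is correct and follows essentially the same approach as the paper: the paper's proof is the single line ``Use Note \ref{uop} as well as Theorems \ref{cli4}, \ref{nuc} and \ref{ext}'', and you have simply unpacked these references in detail. Your explicit use of the tensor identity $\ind_H^I(\res_H^I W_1)\cong(\ind_H^I F)\otimes W_1$ to compute $\ind_H^I W$ as $P\otimes W_1$ is a helpful elaboration (this identity already appears in the proof of Lemma \ref{redu}), but the underlying logic---extend $\lm$ via Theorem \ref{ext}, classify the irreducible $I$-modules over $W$ via Gallagher (Theorem \ref{cli4}), verify the splitting-field hypothesis via Lemma \ref{forma}/Note \ref{uop}, and then invoke Theorem \ref{nuc}---is exactly the paper's.
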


\begin{proof} Use Note \ref{uop} as well as Theorems \ref{cli4}, \ref{nuc} and \ref{ext}.
\end{proof}

\begin{cor}\label{yeka} Suppose that $R$ is finite, $D$ has no special triple, and $S=I/H$ is abelian (this is equivalent to $[\Omega,\Omega]\subseteq \Gamma)$).
Assume, in addition, that $I$ is a strongly ascendant subgroup of $M$. Then $V(D,f)$ has the
following decomposition into non-isomorphic $M$-modules:
$$
V(D,f)\cong \underset{\chi\in\Irr(S)}\bigoplus \ind_I^M
W_1(\chi),
$$
where $W_1(\chi)$ is the vector space $W$ acted upon $I$ via the only extension of $\lm$ to $I$
that satisfies $1+re_{\sigma\tau}\mapsto \chi(r)$ for all $(\sigma,\tau)\in\Omega$.
\end{cor}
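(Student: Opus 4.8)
The plan is to obtain this as the abelian specialization of Theorem \ref{tx}. First I would check that all the hypotheses of Theorem \ref{tx} are in force; the only one requiring comment is that $F$ is a splitting field for $S=I/H$. Since $R$ is finite, Lemma \ref{cara} gives $\chr(F)\nmid|R|$, hence $\chr(F)\nmid[I:H]=|R|^{|\Omega|}=|S|$ by Theorem \ref{corin}(b), so $FS$ is semisimple. Moreover $S$ is generated by the images of the elements $1+re_{\sigma\tau}$ with $(\sigma,\tau)\in\Omega$, $r\in R$, each of which has order dividing $\mathrm{exp}(R^+)$; as $S$ is abelian this forces $\mathrm{exp}(S)\mid\mathrm{exp}(R^+)$, which is finite, so by Lemma \ref{cara2} $F^*$ contains a root of unity of order $\mathrm{exp}(S)$. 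For a finite abelian group this is precisely the condition for $F$ to be a splitting field, so Theorem \ref{tx} applies and yields $V(D,f)\cong\bigoplus_{\chi\in\Irr(S)}\chi(1)\,\ind_I^M(U_\chi\otimes W_1)$. Since $S$ is abelian, $\chi(1)=1$ and every $U_\chi$ is one-dimensional, so this already simplifies to $V(D,f)\cong\bigoplus_{\chi\in\Irr(S)}\ind_I^M(U_\chi\otimes W_1)$.

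It then remains to recognize the summands as the $\ind_I^M W_1(\chi)$. Because $D$ has no special triple, Theorem \ref{ext} guarantees that $\lm$ is extendible to $I$, and since $\Gamma_1=\Gamma\sqcup\Omega$ the group $I=M(\Gamma_1)$ is generated by $H=M(\Gamma)$ together with the subgroups $M_{\sigma\tau}$, $(\sigma,\tau)\in\Omega$. Using $[\Omega,\Omega]\subseteq\Gamma$ (the stated reformulation of ``$S$ abelian''), the commutator formula shows that the images in $S$ of the $M_{\sigma\tau}$ pairwise commute, and—together with the uniqueness of the canonical form of Proposition \ref{co1}, Proposition \ref{trax}, and the order count $|S|=|R|^{|\Omega|}$—that $S$ is their internal direct product, so $S\cong\prod_{(\sigma,\tau)\in\Omega}R^+$. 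Consequently, since $H\unlhd I$ (Theorem \ref{corin}(a)) and at least one extension of $\lm$ to $I$ exists, the set of all extensions of $\lm$ to $I$ is a torsor under $\Hom(S,F^*)=\Irr(S)$, and the map $\phi\mapsto(\phi|_{M_{\sigma\tau}})_{(\sigma,\tau)\in\Omega}$ identifies it with $\prod_{(\sigma,\tau)\in\Omega}\Hom(R^+,F^*)$; under this identification $W_1(\chi)$ is well defined and $\chi\mapsto W_1(\chi)$ is a bijection from $\Irr(S)$ onto the set of extensions. On the other hand, for each $\chi$ the module $U_\chi\otimes W_1$ is one-dimensional with $H$ acting via $\lm$ (as $H$ acts trivially on $U_\chi$), so it too is an extension of $\lm$ to $I$, and $\chi\mapsto U_\chi\otimes W_1$ is again a bijection onto the same set. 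Hence both $\bigoplus_{\chi}\ind_I^M(U_\chi\otimes W_1)$ and $\bigoplus_{\chi}\ind_I^M W_1(\chi)$ equal $\bigoplus_{\phi}\ind_I^M Z_\phi$, the sum running over all extensions $\phi$ of $\lm$ to $I$ with $Z_\phi$ the corresponding $1$-dimensional $I$-module, which gives the claimed decomposition; the non-isomorphic irreducibility of the $\ind_I^M W_1(\chi)$ then follows from Theorem \ref{tx} via the same identification.

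The main obstacle I anticipate is the structural bookkeeping in the second step—pinning down $S\cong(R^+)^{|\Omega|}$ and the torsor description of the extensions of $\lm$ so that the two parametrizations of the summands visibly coincide. No genuinely new idea is needed beyond the commutator computation and the machinery already used for Theorems \ref{corin}, \ref{ext} and \ref{tx}; it is essentially a matter of organizing the relevant bijections carefully.
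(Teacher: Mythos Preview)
Your proposal is correct and follows essentially the same route as the paper, whose proof is the single line ``Use Lemma \ref{cara2} and Theorem \ref{tx}.'' You have simply unpacked what that line entails: Lemma \ref{cara2} supplies the roots of unity that make $F$ a splitting field for the finite abelian group $S$, Theorem \ref{tx} then gives the decomposition with multiplicities $\chi(1)=1$, and the remaining bookkeeping identifying $U_\chi\otimes W_1$ with $W_1(\chi)$ is exactly the elaboration the paper leaves to the reader.
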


\begin{proof} Use Lemma \ref{cara2} and Theorem \ref{tx}.
\end{proof}

\begin{definition}\label{ti1} We will say that
$D$ is overlapping of type 1 provided the following conditions are
satisfied:

$\bullet$ $\al_1<\dots<\al_m$ (this can always be arranged), where
$m\geq 2$.

$\bullet$  The intervals $(\al_i,\be_i)$, $(\al_j,\be_j)$ overlap if and only if $|i-j|=1$;

$\bullet$ No $\be_i$ equals an $\al_j$ (this is automatic if
$|D|=2$).
\end{definition}

Let $\Lambda=\N$ under its usual order. Then $\{(1,3), (2,5), (4,7), (6,8)\}$ is an overlapping subset of $\Phi$ of type 1,
whereas $\{(1,3), (2,4), (3,5)\}$ and $\{(1,4), (2,5), (3,6)\}$ are not.

As the following result indicates, a family of examples to
which Corollary \ref{yeka} applies is given by the overlapping subsets of $\Phi$ of type 1.

\begin{lemma}\label{yeka5} Suppose $D$ is an
overlapping subset of $\Phi$ of type~1. Then $D$ has no special triples and $I/H$ is abelian.
\end{lemma}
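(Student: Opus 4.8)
The claim has two parts: (i) $D$ has no special triples, and (ii) $I/H$ is abelian. By Theorem~\ref{ext} and Corollary~\ref{yeka}, part (ii) is equivalent to $[\Omega,\Omega]\subseteq\Gamma$, while part (i) is equivalent (by Proposition~\ref{barb}) to $[\Gamma_1,\Gamma_1]\cap D=\emptyset$, and is also a prerequisite for the statement of Corollary~\ref{yeka} anyway, so both reduce to elementary combinatorial statements about the intervals in $D$. The plan is to work directly from the definition of overlapping of type~1: we have $\al_1<\dots<\al_m$, consecutive intervals overlap and non-consecutive ones do not, and no $\be_i$ is an $\al_j$.

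First I would pin down the order structure of the $\be_i$. Since $(\al_i,\be_i)$ and $(\al_{i+1},\be_{i+1})$ overlap and $\al_i<\al_{i+1}$, overlapping forces $\al_i<\al_{i+1}<\be_i<\be_{i+1}$; in particular $\be_1<\be_2<\dots<\be_m$, so both endpoint sequences are strictly increasing. I would also record that non-consecutive intervals being non-overlapping, combined with $\al_i<\al_j$ for $i<j$, forces either $\be_i<\al_j$ (disjoint) or $\be_j<\be_i$ (nested); but nesting $\al_i<\al_j<\be_j<\be_i$ is incompatible with $\be_i<\be_{i+1}\le\dots$ unless it degenerates, so for $j\ge i+2$ one gets $\be_i\le\al_j$, and since no $\be$ equals an $\al$ this sharpens to $\be_i<\al_j$.

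Next I would compute $\Omega$. Recall $\Omega=\{(\al,\ga)\in\Phi\mid \exists\,(\al,\be),(\ga,\de)\in D,\ \al<\ga<\be<\de\}$. Given the structure above, the pair $(\al,\be)=(\al_i,\be_i)$ and $(\ga,\de)=(\al_j,\be_j)$ with $\al_i<\al_j<\be_i<\be_j$ happens precisely when $j=i+1$ (consecutive overlap); for $|i-j|\ge 2$ we showed $\be_i<\al_j$, killing the middle inequality. So $\Omega=\{(\al_i,\al_{i+1})\mid 1\le i\le m-1\}$. Now $[\Omega,\Omega]$ consists of $(\al,\be)$ admitting a chain in $\Omega$ from $\al$ to $\be$ of length $\ge 2$; such a chain must be a consecutive run $(\al_i,\al_{i+1}),(\al_{i+1},\al_{i+2}),\dots$, so $[\Omega,\Omega]=\{(\al_i,\al_j)\mid 1\le i<j\le m,\ j\ge i+2\}$. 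To finish part (i): a special triple would need $(\al_a,\be_a),(\al_b,\be_b),(\al_c,\be_c)$ with $\al_a<\al_b<\al_c=\be_a<\be_b<\be_c$; but $\be_a=\al_c$ is forbidden by the "no $\be_i$ equals an $\al_j$" clause, so no special triple exists, giving (i). For part (ii), I must check each $(\al_i,\al_j)\in[\Omega,\Omega]$ (with $j\ge i+2$) lies in $\Gamma$, i.e. is not in any $[\al_k,\rightarrow,\be_k)=\{(\al_k,\de)\in\Phi\mid \de<\be_k\}$. Membership would require $\al_i=\al_k$, hence $k=i$ (the $\al$'s are distinct), and $\al_j<\be_i$; but $j\ge i+2$ gives $\be_i<\al_j$ from the structure analysis, a contradiction. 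Hence $[\Omega,\Omega]\subseteq\Gamma$, and $I/H$ is abelian.

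The only mildly delicate step is the structural lemma in the second paragraph — extracting from "overlapping iff $|i-j|=1$" together with $\al_1<\dots<\al_m$ that the $\be_i$ are also increasing and that non-consecutive intervals are in fact strictly separated ($\be_i<\al_j$ for $j\ge i+2$). This uses the trichotomy for two intervals on a totally ordered set (disjoint, nested, or overlapping) and the explicit exclusion $\be_i\ne\al_j$; once that is in hand, the computations of $\Omega$, $[\Omega,\Omega]$, and the containment in $\Gamma$ are immediate, as is the absence of special triples. So I expect no real obstacle, only careful bookkeeping of the three interval positions.
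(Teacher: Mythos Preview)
Your proposal is correct and follows the same approach as the paper: compute $\Omega=\{(\al_i,\al_{i+1})\mid 1\le i\le m-1\}$, observe $[\Omega,\Omega]\subseteq\Gamma$, and note that the ``no $\be_i$ equals an $\al_j$'' clause rules out special triples. The paper's proof is a one-line assertion of these three facts, whereas you have supplied the supporting order analysis (in particular, that $\be_i<\al_j$ for $j\ge i+2$) that makes them evident.
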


\begin{proof} By Definition \ref{ti1}, we have $\Omega=\{(\al_1,\al_2),\dots, (\al_{m-1},\al_m)\}$,
$[\Omega,\Omega]\subseteq \Gamma$ and $D$ has no special triples.
\end{proof}


\begin{exa}\label{yeka2} Suppose $R$ is finite, $D$ satisfies
$$
\al_1<\al_2<\cdots<\al_m=\be_1<\be_2<\cdots<\be_m,\quad m\geq 3,
$$
and $F$ is a splitting field for $U_{m-2}(R)$. Assume, in
addition, that $[\al_1,\be_m]$ is well-ordered by $\geq$. Then
$V(D,f)$ has the following decomposition as the direct sum of
irreducible non-isomorphic $M$-modules with indicated
multiplicities:
$$
V(D,f)\cong \underset{\chi\in\Irr(B)}\bigoplus
\dm(U_\chi)\dm(V)\;\ind_I^M (U_\chi\otimes V).
$$
Here $I=T\rtimes B$ for suitable subgroups $B\cong U_{m-2}(R)$ and
$T$ described below; each $U_\chi$ is an irreducible $B$-module
-viewed as an $I$-module- affording $\chi$; $V$ is an irreducible
$I$-module of dimension $|R|^{m-2}$, as described below; each
$U_\chi\otimes V$ is an irreducible $I$-module. Moreover, when $m=3$ we have $V(D,f)\cong |R|\ind_I^M V$.
\end{exa}

\begin{proof} By hypothesis
$$
\Omega=\{(\al_1,\al_2),\dots,(\al_1,\al_{m-1})\}\cup
\{(\al_2,\al_m),\dots,(\al_{m-1},\al_{m})\}\cup\Omega'
$$
where
$$
\Omega'=\{(\al_i,\al_j)\,|\,
1<i<j<m\}.
$$
Let
$$
K=I\cap (M((\rightarrow,\be_1))M((\al_1,\leftarrow))),
$$
$$
A=I\cap M([\al_1,\be_1]),
$$
$$
B=I\cap M((\al_1,\be_1)),
$$
$$
L=M_{\al_{1}\al_{2}}\cdots
M_{\al_{1}\al_{m-1}}M_{\al_{1}\be_{1}}M_{\al_{2}\al_{m}}\cdots
M_{\al_{m-1}\al_{m}}.
$$
Note that $A$ (resp. $B$) is the McLain group associated to
$\{\al_1,\dots,\al_{m}\}$ (resp. $\{\al_2,\dots,\al_{m-1}\}$). In
particular, $B\cong U_{m-2}(R)$.  Moreover, $B$ and $L$ are respectively
$M((\al_1,\be_1))$ and $M^{\al_1\be_1}$ for the McLain group associated to
$\{\al_1,\dots,\al_{m}\}$. Furthermore, we have
$$
A=L\rtimes B\text{ and }I=K\rtimes A,
$$
so that for
$$
T=K\rtimes L,
$$
we have
$$
I=T\rtimes B.
$$
Our description of $L$ and the results of \S\ref{elemeM} ensure
that there is one and only one irreducible $L$-module $X$, up to
isomorphism, lying over $\lm_1$ (viewed as a linear character of
$M_{\al_1,\be_1}$ via (\ref{rma})). Here $\dm(X)=|R|^{m-2}$ and
$(X,X)_L=1$.

We deduce from \S\ref{elemeM} and the above interpretation of $A$,
$B$ and $L$ that the action of $L$ on $X$ can be extended to $A$.
Let $V$ be the vector space $X$ viewed as an $A$-module under this
action and let $S:A\to\GL(V)$ be the associated representation.

On the other hand, since
$\Omega\cap(\rightarrow,\be_1)=\emptyset$, we see that $K$ is a
subgroup of $H$. In particular, $\lm$ is defined on $K$ (a
critical and subtle point). As $I=K\rtimes A$, we may define
$P:I\to\GL(V)$ by
$$
P(ak)=S(a)\lm(k).
$$
By Theorem \ref{corin}, $I$ stabilizes $\lm$, so $P$ is a group
homomorphism and $V$ is an irreducible $I$-module. Since $(X,X)_L=1$, we have
$(V,V)_I=1$. Moreover, by construction, $V$
lies over $W$ and
$$
(W,\res_H^I V)_H=|R|^{m-2}=\dm(V).
$$

We claim that an irreducible $I$-module $V'$ lies over $W$ if and
only if $V'\cong U\otimes V$, where $U$ is an irreducible
$B$-module viewed as $I$-module (and hence acted upon trivially by
$T$).

Indeed, let $U$ be an irreducible $B$-module viewed as $I$-module. Here
$\res^I_T V$ is irreducible (since so is $\res^I_L=X$) and, by definition, $U$ is acted upon trivially by~$L$, so, by Theorem \ref{cli4}, $U\otimes V$ is an
irreducible $I$-module. It actually lies over $W$. In fact, since
$T$ acts trivially on $U$ we see that
$$
(W,\res_H^I U\otimes V)_H=\dm(U)\dm(V).
$$

By Theorem \ref{cli4} the $U\otimes V$ are non-isomorphic and
since $F$ is a splitting field for $U_{m-2}(R)$, we have
$(U,U)_B=1$, so by Lemma
\ref{forma}
\begin{equation}
\label{cagas}
(U\otimes V,U\otimes V)_I=1.
\end{equation}
On the other hand, Theorem \ref{inv} gives
$$
(V(D,f),V(D,f))_M=[I:H].
$$
Let
$$
Z=\underset{\chi\in\Irr(B)}\bigoplus
\dm(U_\chi)\dm(V)\;\ind_I^M (U_\chi\otimes V).
$$
By Theorem \ref{fund2} and (\ref{cagas}), for each $\chi\in \Irr(B)$, we have
$$
(\ind_I^M (U_\chi\otimes V),\ind_I^M (U_\chi\otimes V))_M=1,
$$
so
$$
(Z,Z)_M=\dm(V)^2\underset{\chi\in\Irr(B)}\sum\dm(U_\chi)^2=\dm(V)^2|B|=|R|^{2(m-2)}|B|=[I:H].
$$
But $V(D,f)$ is completely reducible by Theorems \ref{fund2} and
\ref{xasc}, so $V(D,f)=Z$.
\end{proof}

\begin{cor}\label{mux} Suppose that $R$ is finite, $D$ has a special
triple, and $\Lambda$ is well-ordered by $\geq$. Then $V(D,f)$ has
a repeated irreducible constituent.
\end{cor}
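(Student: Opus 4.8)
The plan is to read this off from the explicit decomposition of $V(D,f)$ provided by Theorem \ref{nuc}, together with the extendibility criterion of Theorem \ref{ext}. First I would check that the hypotheses of Theorem \ref{nuc} are met. Let $\al=\min\{\al_i\}$ and $\be=\max\{\be_i\}$; since $\Lambda$ is well-ordered by $\geq$, so is the subset $[\al,\be]$, and as $R$ is finite Theorem \ref{xasc} shows that $I=M(\Gamma_1)$ is a strongly ascendant subgroup of $M$. Enlarging $F$ to a finite extension if necessary, we may assume (Note \ref{pie}) that $F$ is a splitting field for $I$ over $\lm$; this is the one place where a little care is needed, but it is harmless here because the decisive fact below — that $\lm$ admits no extension to $I$ — is purely group-theoretic and hence insensitive to the ground field. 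With these hypotheses in place, Theorem \ref{nuc} yields
$$
V(D,f)\cong m_1\ind_I^M V_1\oplus\cdots\oplus m_t\ind_I^M V_t,
$$
where $V_1,\dots,V_t$ represent the isomorphism classes of irreducible $I$-modules lying over $\lm$, the $\ind_I^M V_i$ are pairwise non-isomorphic irreducible $M$-modules, and $m_i=\dm(V_i)$ for each $i$. Moreover $t\geq 1$, since $\ind_H^I W$ is nonzero (its $F$-dimension is $[I:H]\geq 1$).

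The crux is to show that every $m_i$ exceeds $1$. A one-dimensional irreducible $I$-module lying over $\lm$ is precisely an extension of the linear character $\lm$ of $H$ to a linear character of $I$: if $I$ acts on a one-dimensional space via $\chi\colon I\to F^*$, then $W$ is a constituent of its restriction to $H$ exactly when $\chi|_H=\lm$. Since $D$ contains a special triple, Theorem \ref{ext} (via Proposition \ref{barb}) tells us that $\lm$ is \emph{not} extendible to $I$; concretely, the interval of the outermost pair of the special triple lies in $[\Gamma_1,\Gamma_1]\cap D$, so the corresponding $M_{\al\be}$ sits inside $[I,I]$ by Proposition \ref{co2}, whereas $\lm$ restricted there is the nontrivial primitive character $\lm_i$. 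Hence none of the $V_i$ is one-dimensional, and therefore $m_i=\dm(V_i)\geq 2$ for all $1\leq i\leq t$. In particular the irreducible $M$-module $\ind_I^M V_1$ occurs in $V(D,f)$ with multiplicity $m_1\geq 2$, so $V(D,f)$ has a repeated irreducible constituent.

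Thus the deduction is short once Theorems \ref{xasc}, \ref{nuc} and \ref{ext} are in hand; the substantive inputs are the strong ascendancy of $I$ — which is exactly why the well-ordering hypothesis on $\Lambda$ is imposed — and the combinatorial extendibility criterion, both of which have been established. I do not expect a serious obstacle in the final assembly; the only point calling for a few words of justification is the reduction to a splitting field, and, as indicated, it costs nothing here precisely because the obstruction we exploit (the absence of a one-dimensional $I$-module over $\lm$) is field-independent.
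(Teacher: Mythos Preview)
Your argument is correct and takes a genuinely different route from the paper's one-line proof, which reads simply ``Immediate consequence of Example \ref{yeka2}.'' The paper's approach is constructive: the $m=3$ case of Example \ref{yeka2} (where the splitting hypothesis on $U_{m-2}(R)=U_1(R)$ is vacuous) shows that when $D'$ \emph{is} a special triple one has $V(D',f')\cong |R|\cdot\ind_{I'}^M V$ with $\ind_{I'}^M V$ irreducible; then, writing $V(D,f)\cong V(D',f|_{D'})\otimes V(D\setminus D',f|_{D\setminus D'})$, one exhibits $V(D,f)$ as $|R|$ copies of a single $M$-module, and any irreducible constituent of that module is thereby repeated $|R|\geq 2$ times. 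Your approach is instead obstruction-theoretic: Theorem \ref{ext} says $\lm$ cannot extend to $I$, so no $V_i$ in Theorem \ref{nuc} can be one-dimensional, whence every multiplicity $m_i=\dim V_i\geq 2$. Your version is arguably cleaner and yields the stronger conclusion that \emph{every} irreducible constituent of $V(D,f)$ is repeated, not just one.

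One caveat on your passage to a splitting field: the justification you give (``the non-extendibility of $\lm$ is field-independent'') is not quite the point at issue. What you really need is that a repeated constituent in $V(D,f)\otimes_F K$ forces one in $V(D,f)$ over $F$, and this descent is not automatic in general (Schur indices can interfere). In the only place the corollary is invoked (Theorem \ref{mulfe}) the splitting hypothesis is already assumed, so the issue is moot there; but if you want the corollary as stated, the paper's tensor-factorisation route via the $m=3$ case of Example \ref{yeka2} sidesteps the splitting assumption more directly than your argument does.
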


\begin{proof} Immediate consequence of Example \ref{yeka2}.
\end{proof}

\begin{theorem}\label{mulfe} Suppose $R$ is finite and $F$ is a splitting field for $I$ over $\lm$.
Assume, in addition, $\Lambda$ is well-ordered by $\geq$. Then
$V(D,f)$ is multiplicity free if and only if $I/H$ is abelian
(this is equivalent to $[\Omega,\Omega]\subseteq \Gamma$) and $D$
has no special triple.
\end{theorem}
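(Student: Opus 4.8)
\emph{Plan.} The strategy is to read the multiplicities of $V(D,f)$ directly off the full decomposition furnished by Theorem~\ref{nuc}, and then to dispose of the two possibilities for $D$ — with or without a special triple — separately. First I would check that the standing hypotheses put us in a position to apply Theorem~\ref{nuc}: since $\Lambda$ is well-ordered by $\geq$, so is the closed interval $[\al,\be]$ with $\al=\min\{\al_i\}$ and $\be=\max\{\be_i\}$, whence $I$ is a strongly ascendant subgroup of $M$ by Theorem~\ref{xasc}; together with $R$ finite and the assumption that $F$ is a splitting field for $I$ over $\lm$, Theorem~\ref{nuc} gives
$$
V(D,f)\cong m_1\,\ind_I^M V_1\oplus\cdots\oplus m_t\,\ind_I^M V_t,
$$
where $\ind_I^M V_1,\dots,\ind_I^M V_t$ are pairwise non-isomorphic irreducible $M$-modules, $\{V_1,\dots,V_t\}$ is a full set of representatives for the irreducible $I$-modules lying over $\lm$, and $m_i=\dm(V_i)$. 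Thus $V(D,f)$ is multiplicity free if and only if $\dm(V_i)=1$ for every irreducible $I$-module $V_i$ lying over $\lm$; call this condition $(\ast)$.

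Next I would split into cases. If $D$ has a special triple, then Corollary~\ref{mux} (which applies since $R$ is finite and $\Lambda$ is well-ordered by $\geq$) shows that $V(D,f)$ has a repeated irreducible constituent, so $V(D,f)$ is not multiplicity free; and the right-hand side of the asserted equivalence likewise fails in this case, so both directions are accounted for here. Suppose now $D$ has no special triple. By Theorem~\ref{ext}, $\lm$ then extends to a linear character of $I$; let $W_1$ be the $1$-dimensional $I$-module $W$ equipped with such an extension. Since $\res_H^I W_1=W$ is irreducible and $\End_H(W)=F$, Theorem~\ref{cli4} shows that $U\mapsto U\otimes W_1$ is a bijection from the isomorphism classes of irreducible $I/H$-modules onto those of the irreducible $I$-modules lying over $\lm$, with $\dm(U\otimes W_1)=\dm(U)$; and by Lemma~\ref{forma}, $\Hom_I(U\otimes W_1,U\otimes W_1)\cong\Hom_{I/H}(U,U)$, so ``$F$ is a splitting field for $I$ over $\lm$'' is equivalent to ``$F$ is a splitting field for the finite group $I/H$''. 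Hence $(\ast)$ holds if and only if every irreducible $I/H$-module over the splitting field $F$ is $1$-dimensional; since $\chr(F)\nmid|R|$ by Lemma~\ref{cara} and $|I/H|=|R|^{|\Omega|}$ by Theorem~\ref{corin}, this is exactly the condition that $I/H$ be abelian. Combining the two cases yields that $V(D,f)$ is multiplicity free if and only if $D$ has no special triple and $I/H$ is abelian. The parenthetical reformulation ``$I/H$ abelian $\iff[\Omega,\Omega]\subseteq\Gamma$'' follows from $[I,I]=M([\Gamma_1,\Gamma_1])$ (Proposition~\ref{co2}(a) applied to $\Gamma_1$) together with the chain analysis used in the proof of Proposition~\ref{barb}.

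I expect the substantive content to lie entirely in results already established — Theorem~\ref{nuc}, which rests on the strong-ascendance verification of Theorem~\ref{xasc}, and Corollary~\ref{mux}, which rests on Example~\ref{yeka2}. Within the argument above, the one delicate step is the passage from ``$F$ is a splitting field for $I$ over $\lm$'' to ``$F$ is a splitting field for $I/H$'': it requires the extendibility of $\lm$ to $I$, hence the hypothesis that $D$ has no special triple, which is precisely why that hypothesis cannot be dropped in that half of the equivalence and must instead be absorbed into the special-triple case through Corollary~\ref{mux}.
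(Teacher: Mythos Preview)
Your proposal is correct and follows essentially the same route as the paper. The paper cites Theorem~\ref{tx} (together with Theorem~\ref{xasc}) for both halves of the no-special-triple case and Corollary~\ref{mux} for the special-triple case; you unpack Theorem~\ref{tx} into its constituents (Theorem~\ref{nuc}, Theorem~\ref{ext}, Theorem~\ref{cli4}, Lemma~\ref{forma}) and make explicit the passage from ``splitting field for $I$ over $\lm$'' to ``splitting field for $I/H$'', which the paper leaves implicit.
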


\begin{proof} If $I/H$ is
abelian and $D$ has no special triple then $V(D,f)$ is
multiplicity free by Theorems \ref{tx} and \ref{xasc}. If $D$ has
a special triple then Corollary \ref{mux} shows that $V(D,f)$ has
multiplicity, while if $D$ has no special triple but $I/H$ is
non-abelian then Theorems \ref{tx} and \ref{xasc} show that
$V(D,f)$ has multiplicity.
\end{proof}

\begin{exa}\label{ulex} Suppose $R$ is finite, $\Lambda$ is well-ordered by $\geq$, and $D$ satisfies
$$
\al_1<\al_2<\al_3=\be_1<\al_4=\be_2<\cdots<\al_{m}=\be_{m-2}<\be_{m-1}<\be_{m},
$$
where $m\geq 3$. Let $n$ be the largest integer such that $2n\leq
m$. Then

(a) If $m=2n+1$ there is one and only one irreducible $I$-module,
say $V$, up to isomorphism, lying over $\lm$. Moreover,
$\dm(V)=|R|^n$ with $\ind_I^M V$ is irreducible~and
$$
V(D,f)\cong |R|^n\ind_I^M V.
$$

(b) If $m=2n$ there is are exactly $|R|$ irreducible $I$-modules,
say $V_1,\dots,V_{|R|}$, up to isomorphism, lying over $\lm$.
Moreover, each $\dm(V_i)=|R|^{n-1}$, the $M$-modules $\ind_I^M
V_1,\dots,\ind_I^M V_{|R|}$ are irreducible and non-isomorphic,
and
$$
V(D,f)\cong |R|^{n-1}\ind_I^M V_1\oplus\cdots\oplus
|R|^{n-1}\ind_I^M V_{|R|}.
$$
\end{exa}

\begin{proof} By hypothesis
$$
\Omega=\{(\al_1,\al_2),(\al_2,\al_3)\dots,(\al_{m-1},\al_{m})\}.
$$
Set
$$
\Omega_0=\{(\al_1,\al_2),(\al_3,\al_4),\dots,(\al_{2n-1},\al_{2n})\},
$$
which is a closed abelian subset of $\Phi$. Let
$$
S=\underset{(\ga,\de)\in\Gamma_0}\prod M_{\ga\de}\;
(\text{internal direct product inside }I)
$$
and
$$
I_0=M(\Gamma\cup \Omega_0)=H\rtimes S.
$$
By Theorem \ref{corin}(c), $\lm$ is $I$-invariant. Thus, given any
$\mu\in\Hom(S,F^*)$, the map $\lm_\mu:M(\Gamma_0)\to F^*$, given
by
$$
\lm_\mu(hs)=\lm(h)\mu(s),\quad h\in H, s\in S,
$$
is a group homomorphism extending $\lm$. We readily verify that
$I_0\unlhd I$ and the inertia group of $\lm_\mu$ in $I$ is $I_0$.
Let $W_\mu$ be the vector space $W$ acted upon $I_0$ via
$\lm_\mu$. Then $V_\mu=\ind_{I_0}^I W_0$ is irreducible and
$(V_\mu,V_\mu)_I=1$ by Theorem \ref{cl2}. Moreover, it is obvious
that $V_\mu$ lies over $W$.

Suppose first $m=2n+1$. Then
$$
\dm(V_\mu)=[I:I_0]=|R|^{n}=[I_0:H].
$$
Thus Theorem \ref{fund2}(a) implies $\ind_H^I W\cong |R|^n V_\mu$.
This proves that the $V_\mu$ is independent of $\mu$, up to
isomorphism. Now apply Theorems \ref{fund2}(b) and \ref{xasc}.

Suppose next $m=2n$. Then
$$
\dm(V_\mu)=[I:I_0]=|R|^{n-1}\text{ and }[I_0:H]=|R|^n.
$$
Each $\lm_\mu$ is conjugate under $I$ to exactly
$[I:I_0]=|R|^{n-1}$ extensions of $\lm$, whereas by Lemma
\ref{cara2},  $\lm$ has a total of $|S|=|R|^n$ extensions to
$I_0$. Thus, these $|R|^n$ extensions fall into exactly $|R|$
classes upon conjugation by $I$. On the other hand, by Clifford's
theory, $V_\mu\cong V_\nu$ if and only if $\mu$ and $\nu$ are in
the same $I$-conjugacy class. Let $\mu_1,\dots,\mu_{|R|}$ be
representatives for the  $I$-conjugacy classes of extensions of
$\lm$ to $I_0$. Since
$$
\dm(\ind_H^I W)=[I:H]=|R|\times |R|^{n-1}\times |R|^{n-1},
$$
it follows from Theorem \ref{fund2}(a) that
$$
\ind_H^I W\cong |R|^{n-1}V_{\mu_1}\oplus\cdots\oplus
|R|^{n-1}V_{\mu_{|R|}}.
$$
Now apply Theorems \ref{fund2}(b) and \ref{xasc}.
\end{proof}

\begin{exa}\label{coco} Suppose $R$ is finite and $D=D_1\cup\cdots\cup D_\ell$ is a disconnection,
where each $D_i$ has size $m_i=2n_i+1$ and is as in Example
\ref{ulex}.  Assume, in addition, that $\Lambda$ is well-ordered
by $\geq$. Let $f_i$ be the restriction of $f$ to $D_i$, $1\leq
i\leq \ell$. Then
$$V(D,f)\cong |R|^n (Y_1\otimes\cdots\otimes Y_\ell),$$ where $n=n_1+\cdots+n_\ell$,
each $Y_i$ is the irreducible $M$-module from Example \ref{ulex}
satisfying $V(D_i,f_i)\cong |R|^{n_i}Y_i$, and
$Y_1\otimes\cdots\otimes Y_\ell$ is an irreducible $M$-module.
\end{exa}

\begin{proof} This follows from Theorem \ref{cli6} and Example \ref{ulex}.
\end{proof}

\noindent{\bf Acknowledgment.} We thank D. Stanley for useful conversations.

\end{document}